\providecommand{\U}[1]{\protect\rule{.1in}{.1in}}
\newcounter{exer}
\numberwithin{exer}{section}
\theoremstyle{definition}
\newtheorem{theo}{Theorem}[section]
\newenvironment{theorem}[1][]
{\begin{theo}[#1]\begin{leftbar}}
{\end{leftbar}\end{theo}}
\newtheorem{lem}[theo]{Lemma}
\newenvironment{lemma}[1][]
{\begin{lem}[#1]\begin{leftbar}}
{\end{leftbar}\end{lem}}
\newtheorem{prop}[theo]{Proposition}
\newenvironment{proposition}[1][]
{\begin{prop}[#1]\begin{leftbar}}
{\end{leftbar}\end{prop}}
\newtheorem{defi}[theo]{Definition}
\newenvironment{definition}[1][]
{\begin{defi}[#1]\begin{leftbar}}
{\end{leftbar}\end{defi}}
\newtheorem{remk}[theo]{Remark}
\newtheorem{coro}[theo]{Corollary}
\newenvironment{corollary}[1][]
{\begin{coro}[#1]\begin{leftbar}}
{\end{leftbar}\end{coro}}
\newtheorem{conv}[theo]{Convention}
\newtheorem{quest}[theo]{Question}
\newtheorem{warn}[theo]{Warning}
\newtheorem{conj}[theo]{Conjecture}
\newtheorem{exam}[theo]{Example}
\newtheorem{exmp}[exer]{Exercise}
\let\sumnonlimits\sum
\let\prodnonlimits\prod
\let\cupnonlimits\bigcup
\let\capnonlimits\bigcap
\renewcommand{\sum}{\sumnonlimits\limits}
\renewcommand{\prod}{\prodnonlimits\limits}
\renewcommand{\bigcup}{\cupnonlimits\limits}
\renewcommand{\bigcap}{\capnonlimits\limits}
\newenvironment{verlong}{}{}
\newenvironment{vershort}{}{}
\begin{document}

\title{On binomial coefficients modulo squares of primes}
\author{Darij Grinberg}
\date{
\today
}
\maketitle

\begin{abstract}
\textbf{Abstract.} We give elementary proofs for the
Apagodu-Zeilberger-Stanton-Amdeberhan-Tauraso congruences%
\begin{align*}
\sum_{n=0}^{p-1}\dbinom{2n}{n}  &  \equiv\eta_{p}\operatorname{mod}p^{2};\\
\sum_{n=0}^{rp-1}\dbinom{2n}{n}  &  \equiv\eta_{p}\sum_{n=0}^{r-1}\dbinom
{2n}{n}\operatorname{mod}p^{2};\\
\sum_{n=0}^{rp-1}\sum_{m=0}^{sp-1}\dbinom{n+m}{m}^{2}  &  \equiv\eta_{p}%
\sum_{m=0}^{r-1}\sum_{n=0}^{s-1}\dbinom{n+m}{m}^{2}\operatorname{mod}p^{2},
\end{align*}
where $p$ is an odd prime, $r$ and $s$ are nonnegative integers, and%
\[
\eta_{p}=%
\begin{cases}
0, & \text{if }p\equiv0\operatorname{mod}3;\\
1, & \text{if }p\equiv1\operatorname{mod}3;\\
-1, & \text{if }p\equiv2\operatorname{mod}3
\end{cases}
.
\]

\end{abstract}
\tableofcontents

\section{Introduction}

In this note, we prove that any odd prime $p$ and any $r,s\in\mathbb{N}$
satisfy%
\begin{align*}
\sum_{n=0}^{p-1}\dbinom{2n}{n}  &  \equiv\eta_{p}\operatorname{mod}%
p^{2}\ \ \ \ \ \ \ \ \ \ \left(  \text{Theorem \ref{thm.Z1}}\right)  ;\\
\sum_{n=0}^{rp-1}\dbinom{2n}{n}  &  \equiv\eta_{p}\sum_{n=0}^{r-1}\dbinom
{2n}{n}\operatorname{mod}p^{2}\ \ \ \ \ \ \ \ \ \ \left(  \text{Theorem
\ref{thm.Z2}}\right)  ;\\
\sum_{n=0}^{rp-1}\sum_{m=0}^{sp-1}\dbinom{n+m}{m}^{2}  &  \equiv\eta_{p}%
\sum_{m=0}^{r-1}\sum_{n=0}^{s-1}\dbinom{n+m}{m}^{2}\operatorname{mod}%
p^{2}\ \ \ \ \ \ \ \ \ \ \left(  \text{Theorem \ref{thm.Z3}}\right)  ,
\end{align*}
where%
\[
\eta_{p}=%
\begin{cases}
0, & \text{if }p\equiv0\operatorname{mod}3;\\
1, & \text{if }p\equiv1\operatorname{mod}3;\\
-1, & \text{if }p\equiv2\operatorname{mod}3
\end{cases}
.
\]
These three congruences are (slightly extended versions of) three of the
\textquotedblleft Super-Conjectures\textquotedblright\ (namely, 1,
1\textquotedblright\ and 4') stated by Apagodu and Zeilberger in
\cite{ApaZei16}\footnote{In the arXiv preprint version of \cite{ApaZei16}
(\href{https://arxiv.org/abs/1606.03351v2}{arXiv:1606.03351v2}), these
congruences appear as \textquotedblleft Super-Conjectures\textquotedblright%
\ 1, 1\textquotedblright\ and 5', respectively.}. Our proofs are more
elementary than previous proofs by Stanton \cite{Stanto16} and Amdeberhan and
Tauraso \cite{AmdTau16}.

\subsection{Binomial coefficients}

Let us first recall the definition of binomial coefficients:\footnote{We use
the notation $\mathbb{N}$ for the set $\left\{  0,1,2,\ldots\right\}  $.}

\begin{definition}
\label{def.binom}Let $n\in\mathbb{N}$ and $m\in\mathbb{Z}$. Then, the
\textit{binomial coefficient} $\dbinom{m}{n}$ is a rational number defined by%
\[
\dbinom{m}{n}=\dfrac{m\left(  m-1\right)  \cdots\left(  m-n+1\right)  }{n!}.
\]

\end{definition}

\begin{verlong}
This definition is a particular case of the definition given in
\cite[Definition 3.1]{detnotes}.
\end{verlong}

\begin{verlong}
For the sake of convenience, let us slightly extend the notion of binomial coefficients:
\end{verlong}

\begin{definition}
\label{def.binom.negative}Let $n$ be a negative integer. Let $m\in\mathbb{Z}$.
Then, the \textit{binomial coefficient} $\dbinom{m}{n}$ is a rational number
defined by $\dbinom{m}{n}=0$.
\end{definition}

\begin{vershort}
(This is the definition used in \cite{GKP} and \cite{lucas}. Some authors
follow other conventions instead.)
\end{vershort}

\begin{verlong}
This convention is the one used by Graham, Knuth and Patashnik in \cite{GKP},
and also by myself in \cite{lucas}. Other authors use other conventions instead.
\end{verlong}

The following proposition is well-known (see, e.g., \cite[Proposition
1.9]{lucas}):

\begin{proposition}
\label{prop.binom.int}We have $\dbinom{m}{n}\in\mathbb{Z}$ for any
$m\in\mathbb{Z}$ and $n\in\mathbb{Z}$.
\end{proposition}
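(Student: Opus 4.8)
The plan is to reduce everything to the case of a nonnegative top index $m$, where integrality is transparent, and to handle the remaining cases by explicit reductions. First I would dispose of the easy case $n < 0$: here Definition~\ref{def.binom.negative} gives $\dbinom{m}{n} = 0 \in \mathbb{Z}$ outright, so from this point on I may assume $n \in \mathbb{N}$.

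The workhorse will be Pascal's recurrence $\dbinom{m}{n} = \dbinom{m-1}{n-1} + \dbinom{m-1}{n}$, valid for every $m \in \mathbb{Z}$ and every $n \geq 1$. I would prove this directly from Definition~\ref{def.binom} by putting the two right-hand terms over the common denominator $n!$ and factoring out the shared product; the only mild subtlety is the boundary value $n = 1$, where the term $\dbinom{m-1}{0} = 1$ must be accounted for separately, but this falls out of the same computation.

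Next I would settle the claim for nonnegative top, i.e. for all $m \in \mathbb{N}$ and all $n \in \mathbb{N}$, by induction on $m$. In the base case $m = 0$ the numerator $0 \cdot (0-1) \cdots (0-n+1)$ vanishes as soon as $n \geq 1$ and equals the empty product $1$ when $n = 0$, so $\dbinom{0}{n} \in \{0,1\} \subseteq \mathbb{Z}$. In the induction step, Pascal's recurrence writes $\dbinom{m}{n}$ (for $n \geq 1$) as a sum of two binomial coefficients with top $m-1$, both integers by the induction hypothesis, while the case $n = 0$ again gives $1$.

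Finally I would remove the hypothesis $m \geq 0$ using the upper-negation identity $\dbinom{m}{n} = (-1)^n \dbinom{n-m-1}{n}$ for $n \in \mathbb{N}$, which I would likewise verify by pulling a factor of $-1$ out of each of the $n$ numerator factors in Definition~\ref{def.binom}. When $m$ is a negative integer and $n \in \mathbb{N}$, the new top satisfies $n - m - 1 \geq n \geq 0$, so the right-hand side is an integer by the previous step, and hence so is $\dbinom{m}{n}$. The main obstacle here is not any single deep step but the careful bookkeeping of edge cases — the behaviour at $n = 0$ and $n = 1$, and the sign reversal under upper negation — together with verifying the two algebraic identities cleanly enough that both short inductions go through without gaps.
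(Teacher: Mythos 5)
Your proof is correct and complete. Note, however, that the paper itself gives no proof of Proposition \ref{prop.binom.int}: it simply cites an external reference (\cite[Proposition 1.9]{lucas}), so there is no internal argument to compare yours against. Your route --- disposing of $n<0$ via Definition \ref{def.binom.negative}, proving Pascal's recurrence for arbitrary integer top directly from Definition \ref{def.binom}, establishing integrality for $m\in\mathbb{N}$ by induction on $m$, and then transporting the result to negative $m$ via the upper-negation identity $\dbinom{m}{n}=\left(-1\right)^{n}\dbinom{n-m-1}{n}$ --- is the standard one, and it dovetails nicely with the paper: the two identities you verify by hand are exactly Propositions \ref{prop.binom.rec.neg} and \ref{prop.binom.upper-neg}, which the paper states (without proof) later on. Your key observation that $n-m-1\geq n\geq 0$ when $m<0$, so that the upper-negation step lands back in the already-settled nonnegative-top case, is precisely what makes the reduction work, and your handling of the boundary cases ($n=0$, $n=1$, and the empty product in the base case $m=0$) is sound.
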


Proposition \ref{prop.binom.int} shows that $\dbinom{m}{n}$ is an integer
whenever $m\in\mathbb{Z}$ and $n\in\mathbb{Z}$. We shall tacitly use this
below, when we study congruences involving binomial coefficients.

One advantage of Definition \ref{def.binom.negative} is that it makes the
following hold:

\begin{verlong}
\begin{proposition}
\label{prop.binom.newton.1}For any $n\in\mathbb{Z}$, we have%
\[
\left(  1+X\right)  ^{n}=\sum_{m\in\mathbb{N}}\dbinom{n}{m}X^{m}%
\]
in the ring $\mathbb{Z}\left[  \left[  X\right]  \right]  $ of formal power series.
\end{proposition}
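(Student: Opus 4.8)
The plan is to show that the right-hand side, viewed as a function of $n$, satisfies the same recurrence and initial condition as $\left(1+X\right)^n$, and then to induct in both directions. First I would note that for each fixed $n\in\mathbb{Z}$ the expression $\sum_{m\in\mathbb{N}}\dbinom{n}{m}X^m$ is a genuine element of $\mathbb{Z}\left[\left[X\right]\right]$: its $X^m$-coefficient $\dbinom{n}{m}$ is an integer by Proposition \ref{prop.binom.int}, and a formal power series is nothing but its sequence of coefficients. Write $f\left(n\right)=\sum_{m\in\mathbb{N}}\dbinom{n}{m}X^m$ for this series. Since $1+X$ has constant term $1$, it is a unit of $\mathbb{Z}\left[\left[X\right]\right]$, so $\left(1+X\right)^n$ is well-defined for every $n\in\mathbb{Z}$ (for negative $n$ it is the inverse of $\left(1+X\right)^{-n}$); the goal is thus $f\left(n\right)=\left(1+X\right)^n$ for all $n\in\mathbb{Z}$.

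The key step is to establish the functional equation $f\left(n\right)=\left(1+X\right)f\left(n-1\right)$ for every $n\in\mathbb{Z}$. This follows from Pascal's recurrence $\dbinom{n}{m}=\dbinom{n-1}{m}+\dbinom{n-1}{m-1}$, which holds for all $n\in\mathbb{Z}$ and all $m\in\mathbb{N}$ once we read $\dbinom{n-1}{-1}$ as $0$ (this is exactly the case $m=0$, and is precisely why the convention of Definition \ref{def.binom.negative} is convenient). Summing this recurrence against $X^m$ over $m\in\mathbb{N}$, the first term contributes $f\left(n-1\right)$ and the second contributes $\sum_{m\in\mathbb{N}}\dbinom{n-1}{m-1}X^m=X\sum_{m\in\mathbb{N}}\dbinom{n-1}{m}X^m=Xf\left(n-1\right)$, where the index shift is legitimate because the would-be $m=0$ summand $\dbinom{n-1}{-1}$ vanishes. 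Adding the two gives $f\left(n\right)=\left(1+X\right)f\left(n-1\right)$.

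From here I would finish by a two-sided induction anchored at $n=0$, where $f\left(0\right)=\dbinom{0}{0}=1=\left(1+X\right)^0$ since $\dbinom{0}{m}=0$ for $m\geq 1$. Running the functional equation upward gives $f\left(n\right)=\left(1+X\right)^n$ for all $n\geq 0$. To reach negative $n$, I would rewrite the functional equation as $f\left(n-1\right)=\left(1+X\right)^{-1}f\left(n\right)$---permissible exactly because $1+X$ is a unit---and run it downward from $f\left(0\right)=1$, obtaining $f\left(n\right)=\left(1+X\right)^n$ for $n<0$ as well.

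The only real obstacle is the negative-$n$ half of the statement: for $n\geq 0$ the claim is just the ordinary (finite) binomial theorem, but for $n<0$ one must take seriously that $\left(1+X\right)^n$ denotes an inverse in $\mathbb{Z}\left[\left[X\right]\right]$, and it is the invertibility of $1+X$ together with the downward induction that carries the identity across $n=0$. A secondary point to be careful about is the justification of Pascal's recurrence for arbitrary integer upper index under Definition \ref{def.binom}; this is a short direct computation from the product formula, using the convention $\dbinom{n-1}{-1}=0$ to cover $m=0$. (An alternative route, should Pascal's rule be inconvenient, is to prove the multiplicativity $f\left(a\right)f\left(b\right)=f\left(a+b\right)$ directly from the Vandermonde convolution and then deduce $f\left(n\right)=f\left(1\right)^n=\left(1+X\right)^n$; the structure of the induction would be the same.)
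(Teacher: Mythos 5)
Your proof is correct. There is actually nothing in the paper to compare it against: Proposition \ref{prop.binom.newton.1} is stated there without proof, as an illustration of why the convention of Definition \ref{def.binom.negative} is convenient, so your write-up supplies an argument the paper omits. The argument is sound and uses only tools the paper already provides: the Pascal recurrence you need is exactly Proposition \ref{prop.binom.rec.neg} (valid for all integer arguments, with Definition \ref{def.binom.negative} giving $\dbinom{n-1}{-1}=0$), so you may simply cite it rather than recompute it from the product formula; the coefficient-wise summation yielding $f\left(n\right)=\left(1+X\right)f\left(n-1\right)$ is legitimate since addition and multiplication by $X$ in $\mathbb{Z}\left[\left[X\right]\right]$ act coefficient-wise; and the two-sided induction anchored at $f\left(0\right)=1$, with the downward direction justified by the invertibility of $1+X$, is precisely what is needed to reach negative $n$ --- the only part of the statement not already contained in the finite binomial theorem, as you correctly identify. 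The alternative route you sketch (multiplicativity $f\left(a\right)f\left(b\right)=f\left(a+b\right)$ via Vandermonde, then $f\left(n\right)=f\left(1\right)^{n}$) would also work, since the paper's Proposition \ref{prop.vandermonde.consequences} is stated for arbitrary integers $x$ and $y$; but the Pascal-recurrence route is the more economical of the two given what the paper already proves.
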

\end{verlong}

\begin{proposition}
\label{prop.binom.newton}For any $n\in\mathbb{Z}$ and $m\in\mathbb{Z}$, the
binomial coefficient $\dbinom{n}{m}$ is the coefficient of $X^{m}$ in the
formal power series $\left(  1+X\right)  ^{n}\in\mathbb{Z}\left[  \left[
X\right]  \right]  $. (Here, the coefficient of $X^{m}$ in any formal power
series is defined to be $0$ when $m$ is negative.)
\end{proposition}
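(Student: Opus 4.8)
The plan is to reduce the proposition to the single formal-power-series identity
\[
\left(  1+X\right)  ^{n}=\sum_{m\in\mathbb{N}}\dbinom{n}{m}X^{m}
\qquad\text{in }\mathbb{Z}\left[  \left[  X\right]  \right]  ,
\]
valid for every $n\in\mathbb{Z}$. Once this is established, the proposition follows at once: for $m\geq0$ the coefficient of $X^{m}$ on the left is exactly $\dbinom{n}{m}$, while for $m<0$ both the stated coefficient and $\dbinom{n}{m}$ (the latter by Definition \ref{def.binom.negative}) are $0$. First I would note that the identity even makes sense for negative $n$: since $1+X$ has constant term $1$, it is a unit in $\mathbb{Z}\left[  \left[  X\right]  \right]  $, so $\left(  1+X\right)  ^{n}$ is an honest power series for all $n\in\mathbb{Z}$; and Proposition \ref{prop.binom.int} ensures each coefficient $\dbinom{n}{m}$ on the right is an integer, so the right-hand side lies in $\mathbb{Z}\left[  \left[  X\right]  \right]  $ as well.

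The engine of the argument is Pascal's recurrence
\[
\dbinom{n+1}{m}=\dbinom{n}{m}+\dbinom{n}{m-1},
\]
which I would verify for all $n\in\mathbb{Z}$ and all $m\in\mathbb{Z}$. For $m\geq1$ this is a direct computation from Definition \ref{def.binom}: factoring $\dbinom{n}{m}$ and $\dbinom{n}{m-1}$ over the common quantity $\dfrac{n\left(  n-1\right)  \cdots\left(  n-m+2\right)  }{\left(  m-1\right)  !}$ and combining the two fractions produces precisely $\dbinom{n+1}{m}$. For $m\leq0$ the identity is immediate from the conventions (both sides vanish when $m<0$, and it reads $1=1+0$ when $m=0$). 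Writing $f_{n}:=\sum_{m\in\mathbb{N}}\dbinom{n}{m}X^{m}$, multiplying by $1+X$, and reading off the coefficient of $X^{m}$, Pascal's recurrence then collapses to the clean relation $\left(  1+X\right)  f_{n}=f_{n+1}$ for every $n\in\mathbb{Z}$.

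The target identity $f_{n}=\left(  1+X\right)  ^{n}$ now follows by induction in both directions from the base case $f_{0}=1$ (immediate, as $\dbinom{0}{0}=1$ and $\dbinom{0}{m}=0$ for $m>0$). For $n\geq0$ I would induct upward via $f_{n+1}=\left(  1+X\right)  f_{n}=\left(  1+X\right)  \left(  1+X\right)  ^{n}=\left(  1+X\right)  ^{n+1}$; for $n<0$ I would induct downward, rewriting the recurrence as $f_{n}=\left(  1+X\right)  ^{-1}f_{n+1}$, which is legitimate exactly because $1+X$ is invertible in $\mathbb{Z}\left[  \left[  X\right]  \right]  $. I expect the only real subtlety to be the bookkeeping around the negative-index conventions --- ensuring the $m<0$ terms are handled consistently on both sides and that the downward induction is anchored correctly --- rather than any genuine mathematical obstacle, since the entire argument is driven by Pascal's rule together with the invertibility of $1+X$.
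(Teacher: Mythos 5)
Your proposal is correct, but there is nothing in the paper to compare it against: the paper never proves Proposition \ref{prop.binom.newton}. It is stated as a known fact (an ``advantage'' of the convention $\dbinom{m}{n}=0$ for negative lower index) and is then only \emph{used} (e.g., in the standard generating-function proof of Theorem \ref{thm.lucas}), so your argument fills a gap the paper deliberately leaves to the literature. Your route is the standard one and it is sound: Pascal's recurrence $\dbinom{n+1}{m}=\dbinom{n}{m}+\dbinom{n}{m-1}$, verified for all $n,m\in\mathbb{Z}$ directly from Definitions \ref{def.binom} and \ref{def.binom.negative} (your common-factor computation for $m\geq1$ checks out, and the cases $m\leq0$ are immediate), translates into the functional equation $\left(1+X\right)f_{n}=f_{n+1}$ for $f_{n}=\sum_{m\in\mathbb{N}}\dbinom{n}{m}X^{m}$; since $1+X$ has constant term $1$ and is therefore a unit in $\mathbb{Z}\left[\left[X\right]\right]$, induction upward and downward from the base case $f_{0}=1$ yields $f_{n}=\left(1+X\right)^{n}$ for all $n\in\mathbb{Z}$, and the case $m<0$ of the proposition is trivial on both sides. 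Two minor economies are available: the recurrence you re-derive is exactly Proposition \ref{prop.binom.rec.neg}, which the paper states for all integers $m,n$ (with a reference for its proof), so you could cite it instead of reproving it; and the appeal to Proposition \ref{prop.binom.int} is not strictly necessary, since you may run the whole argument in $\mathbb{Q}\left[\left[X\right]\right]$ and read off the coefficient identity there --- though including it costs nothing and shows the identity lives in $\mathbb{Z}\left[\left[X\right]\right]$.
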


\subsection{Classical congruences}

The behavior of binomial coefficients modulo primes and prime powers is a
classical subject of research; see \cite[\S 2.1]{Mestro14} for a survey of
much of it. Let us state two of the most basic results in this subject:

\begin{theorem}
\label{thm.lucas}Let $p$ be a prime. Let $a$ and $b$ be two integers. Let $c$
and $d$ be two elements of $\left\{  0,1,\ldots,p-1\right\}  $. Then,%
\[
\dbinom{ap+c}{bp+d}\equiv\dbinom{a}{b}\dbinom{c}{d}\operatorname{mod}p.
\]

\end{theorem}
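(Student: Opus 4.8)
The plan is to prove this (Lucas' theorem) via generating functions, exploiting Proposition~\ref{prop.binom.newton}, which identifies $\dbinom{n}{m}$ with the coefficient of $X^m$ in $\left(1+X\right)^n$. Working in the power-series ring $\mathbb{F}_p\left[\left[X\right]\right]$ over the field $\mathbb{F}_p=\mathbb{Z}/p\mathbb{Z}$, I would reduce the whole identity modulo $p$ and then read off the coefficient of $X^{bp+d}$ on both sides.

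The first ingredient is the Frobenius (``freshman's dream'') identity. Since $p\mid\dbinom{p}{k}$ for every $k\in\left\{1,\dots,p-1\right\}$ (the numerator $p\left(p-1\right)\cdots\left(p-k+1\right)$ contains the prime factor $p$, which is not cancelled by the denominator $k!$), Proposition~\ref{prop.binom.newton} yields $\left(1+X\right)^p\equiv1+X^p\operatorname{mod}p$ in $\mathbb{Z}\left[\left[X\right]\right]$; equivalently, $\left(1+X\right)^p=1+X^p$ in $\mathbb{F}_p\left[\left[X\right]\right]$. Let $\pi:\mathbb{Z}\left[\left[X\right]\right]\to\mathbb{F}_p\left[\left[X\right]\right]$ be the canonical reduction ring homomorphism. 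Because $1+X$ is a unit of $\mathbb{Z}\left[\left[X\right]\right]$ (its constant term is $1$) and $\pi$ is a ring homomorphism, $\pi$ sends $\left(1+X\right)^n$ to $\left(1+X\right)^n$ for every $n\in\mathbb{Z}$, including negative $n$, since a ring homomorphism preserves inverses of units. This is what lets the argument run for all integers $a$, not merely nonnegative ones.

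Next I would factor the exponent: in $\mathbb{F}_p\left[\left[X\right]\right]$ we have $\left(1+X\right)^{ap+c}=\bigl(\left(1+X\right)^p\bigr)^a\left(1+X\right)^c=\left(1+X^p\right)^a\left(1+X\right)^c$, using the Frobenius identity and the fact that $a$ may be any integer. Expanding both factors by Proposition~\ref{prop.binom.newton} (substituting $X^p$ for $X$ in the first) gives $\left(1+X^p\right)^a\left(1+X\right)^c=\sum_{i,j}\dbinom{a}{i}\dbinom{c}{j}X^{pi+j}$, where $i,j$ range over $\mathbb{N}$. Now I extract the coefficient of $X^{bp+d}$. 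On the left it is the reduction modulo $p$ of $\dbinom{ap+c}{bp+d}$, again by Proposition~\ref{prop.binom.newton}. On the right, I need $pi+j=bp+d$ with $i,j\in\mathbb{N}$; since $\dbinom{c}{j}=0$ whenever $j>c$, only indices with $0\le j\le c\le p-1$ contribute, and then the inequalities $0\le j\le p-1$ and $0\le d\le p-1$ force $i=b$ and $j=d$ by uniqueness of division with remainder (when $b\ge0$). Hence the right-hand coefficient is $\dbinom{a}{b}\dbinom{c}{d}$ modulo $p$, which is the claim. The case $b<0$ is immediate: then $bp+d<0$, so $\dbinom{ap+c}{bp+d}=0$ by convention and likewise $\dbinom{a}{b}=0$ by Definition~\ref{def.binom.negative}, and both sides of the congruence vanish.

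I expect the main obstacle to be the bookkeeping around negative exponents rather than any deep difficulty: I must justify carefully that $\pi\bigl(\left(1+X\right)^a\bigr)=\left(1+X\right)^a$ for negative $a$ via the unit/inverse property of $\pi$, and check that the coefficient extraction is consistent with the convention $\dbinom{m}{n}=0$ for negative $n$. The Frobenius identity itself is routine once $p\mid\dbinom{p}{k}$ is established, and the division-with-remainder uniqueness that collapses the double sum to a single term is elementary.
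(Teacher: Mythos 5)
Your proof is correct, and it is essentially the proof the paper relies on: the paper does not prove Theorem \ref{thm.lucas} itself, but cites the standard generating-function argument based on Proposition \ref{prop.binom.newton} and remarks that it applies mutatis mutandis to arbitrary integers $a$ and $b$. Your write-up is exactly that argument, with the ``mutatis mutandis'' part (inverting the unit $1+X$ in $\mathbb{F}_p\left[\left[X\right]\right]$ to handle negative exponents, and the trivial case $b<0$ where both sides vanish by Definition \ref{def.binom.negative}) made explicit.
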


Theorem \ref{thm.lucas} is known under the name of \textit{Lucas's theorem},
and is proven in many places (e.g., \cite[\S 2.1]{Mestro14} or \cite[Proof of
\S 4]{Hausne83} or \cite[proof of Lucas's theorem]{AnBeRo05} or \cite[Exercise
5.61]{GKP}) at least in the case when $a$ and $b$ are nonnegative integers.
The standard proof of Theorem \ref{thm.lucas} in this case uses generating
functions (specifically, Proposition \ref{prop.binom.newton}); this proof
applies (mutatis mutandis) in the general case as well. See \cite[Theorem
1.11]{lucas} for an elementary proof of Theorem \ref{thm.lucas}.

Another fundamental result is the following:

\begin{theorem}
\label{thm.p2cong}Let $p$ be a prime. Let $a$ and $b$ be two integers. Then,%
\[
\dbinom{ap}{bp}\equiv\dbinom{a}{b}\operatorname{mod}p^{2}.
\]

\end{theorem}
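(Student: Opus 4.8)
The plan is to argue inside the ring of formal power series $\mathbb{Z}\left[\left[X\right]\right]$ and extract coefficients, relying on Proposition \ref{prop.binom.newton}, which identifies $\binom{n}{m}$ with the coefficient of $X^m$ in $(1+X)^n$ for all integers $n$ and $m$. First I would dispose of the degenerate case: if $b<0$, then $bp<0$, so both $\binom{ap}{bp}$ and $\binom{a}{b}$ vanish by Definition \ref{def.binom.negative}, and the claim is trivial. Hence I may assume $b\in\mathbb{N}$, and the task reduces to showing that the coefficient of $X^{bp}$ in $(1+X)^{ap}$ is congruent to $\binom{a}{b}$ modulo $p^2$.

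The key algebraic input is a ``Frobenius with remainder'' expansion of $(1+X)^p$. Since $p$ is prime, $p\mid\binom{p}{k}$ for $1\le k\le p-1$, so I can write $(1+X)^p=1+X^p+p\,g(X)$ with $g(X)=\sum_{k=1}^{p-1}\frac1p\binom{p}{k}X^k\in\mathbb{Z}[X]$; crucially, every monomial of $g$ has exponent strictly between $0$ and $p$, hence not divisible by $p$. Writing $u=X^p+p\,g(X)$, which has zero constant term, the identity $(1+X)^{ap}=\bigl((1+X)^p\bigr)^a=(1+u)^a=\sum_{j\ge0}\binom{a}{j}u^j$ holds in $\mathbb{Z}\left[\left[X\right]\right]$ for every integer $a$. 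Reducing modulo $p^2$, every contribution that uses the factor $p\,g$ two or more times dies, so expanding $u^j=(X^p+pg)^j$ and keeping only the terms with no copy or exactly one copy of $pg$ yields $(1+X)^{ap}\equiv(1+X^p)^a+ap\,g(X)\,(1+X^p)^{a-1}\pmod{p^2}$, where the correction term collects the ``one copy'' contributions via $\sum_j\binom{a}{j}j\,Y^{j-1}=a(1+Y)^{a-1}$ evaluated at $Y=X^p$.

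Finally I would extract the coefficient of $X^{bp}$. In the correction term $ap\,g(X)(1+X^p)^{a-1}$, the factor $(1+X^p)^{a-1}$ contributes only exponents divisible by $p$, while $g$ contributes exponents in $\{1,\dots,p-1\}$; hence every exponent occurring is incongruent to $0$ modulo $p$, and so the coefficient of $X^{bp}$ there is $0$. Therefore the coefficient of $X^{bp}$ in $(1+X)^{ap}$ is congruent modulo $p^2$ to the coefficient of $X^{bp}$ in $(1+X^p)^a$, which equals $\binom{a}{b}$ by substituting $Y=X^p$ in $(1+Y)^a=\sum_m\binom{a}{m}Y^m$. This proves the congruence.

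The main obstacle will be legitimizing these manipulations when $a<0$, where $(1+X)^{ap}$ is a genuine infinite series rather than a polynomial: I must confirm that $\bigl((1+X)^p\bigr)^a=(1+X)^{ap}$ and the binomial expansion of $(1+u)^a$ are valid in $\mathbb{Z}\left[\left[X\right]\right]$ (using that $u$ has zero constant term, so the substitution and the inverse power series are well defined), and that reducing an infinite power series modulo $p^2$ coefficientwise is compatible with these operations. For $a\ge0$ everything is a finite polynomial identity and no such care is needed.
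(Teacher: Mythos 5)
Your proof is correct, but there is nothing in the paper to compare it against: the paper does not prove Theorem \ref{thm.p2cong} at all, treating it as a known result (attributed to Babbage) and referring to \cite[Theorem 1.12]{lucas} for a proof covering arbitrary integers $a$ and $b$, and to \cite[Exercise 5.62]{GKP} and \cite[Exercise 1.14]{Stanley-EC1} for the nonnegative case. So your argument is a self-contained substitute, and it is the generating-function route: write $(1+X)^p = 1 + X^p + p\,g(X)$ with $g$ supported in degrees $1,\ldots,p-1$ (using $p \mid \binom{p}{k}$ for $0<k<p$), expand $(1+u)^a = \sum_{j\geq 0} \binom{a}{j} u^j$ with $u = X^p + p\,g$, discard everything containing two factors of $p\,g$ modulo $p^2$, and note that the surviving correction $ap\,g(X)(1+X^p)^{a-1}$ involves only exponents incongruent to $0$ modulo $p$, hence cannot touch the coefficient of $X^{bp}$; meanwhile the coefficient of $X^{bp}$ in $(1+X^p)^a$ is $\binom{a}{b}$. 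Each step checks out, including the evaluation $\sum_j j\binom{a}{j}Y^{j-1} = a(1+Y)^{a-1}$, which follows from the absorption identity (Proposition \ref{prop.binom.absorb}) for every integer $a$. Your concern about negative $a$ is the right one to flag, and your resolution is sound: $1+u$ has constant term $1$, hence is a unit of $\mathbb{Z}\left[\left[X\right]\right]$; substituting the constant-term-zero series $u$ is a ring homomorphism that commutes with inverses and with summable infinite sums, so $\left((1+X)^p\right)^a=(1+X)^{ap}$ and $(1+u)^a=\sum_j\binom{a}{j}u^j$ persist for $a<0$; and coefficientwise reduction modulo $p^2$ (i.e., passing to $(\mathbb{Z}/p^2\mathbb{Z})\left[\left[X\right]\right]$) is a ring homomorphism compatible with such sums. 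One point worth making explicit: the full-integer generality is not a luxury here, since the paper invokes Theorem \ref{thm.p2cong} in Lemma \ref{lem.A1} with upper argument $N\in\mathbb{Z}$, so a proof restricted to $a,b\in\mathbb{N}$, as in the textbook references, would not suffice. Compared with the elementary proof in the cited reference \cite{lucas}, yours trades direct binomial-identity manipulation for formal-power-series infrastructure; once that infrastructure is granted, it is the shorter argument.
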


Theorem \ref{thm.p2cong} is a known result, perhaps due to Charles Babbage. It
appears with proof in \cite[Theorem 1.12]{lucas}; again, many sources prove it
for nonnegative $a$ and $b$ (for example \cite[Exercise 1.14 \textbf{c}%
]{Stanley-EC1} or \cite[Exercise 5.62]{GKP}). Notice that if $p\geq5$, then
the modulus $p^{2}$ can be replaced by $p^{3}$ or (depending on $a$, $b$ and
$p$) by even higher powers of $p$; see \cite[(22) and (23)]{Mestro14} for the
details. See also \cite[Lemma 2.1]{SunTau11} for another strengthening of
Theorem \ref{thm.p2cong}.

\subsection{The three modulo-$p^{2}$ congruences}

\begin{definition}
For any $p\in\mathbb{Z}$, we define an integer $\eta_{p}\in\left\{
-1,0,1\right\}  $ by%
\[
\eta_{p}=%
\begin{cases}
0, & \text{if }p\equiv0\operatorname{mod}3;\\
1, & \text{if }p\equiv1\operatorname{mod}3;\\
-1, & \text{if }p\equiv2\operatorname{mod}3
\end{cases}
.
\]

\end{definition}

Notice that $\eta_{p}$ is the so-called \textit{Legendre symbol} $\left(
\dfrac{p}{3}\right)  $ known from number theory.

We are now ready to state three conjectures by Apagodu and Zeilberger, which
we shall prove in the sequel. The first one is \cite[Super-Conjecture
1]{ApaZei16}:\footnote{To be precise (and boastful), our Theorem \ref{thm.Z1}
is somewhat stronger than \cite[Super-Conjecture 1]{ApaZei16}, since we only
require $p$ to be odd (rather than $p\geq5$). Of course, in the case of
Theorem \ref{thm.Z1}, this extra generality is insignificant, since it just
adds the possibility of $p=3$, in which case Theorem \ref{thm.Z1} can be
checked by hand. However, for Theorems \ref{thm.Z2} and \ref{thm.Z3} further
below, we gain somewhat more from this generality.}

\begin{theorem}
\label{thm.Z1}Let $p$ be an odd prime. Then,%
\[
\sum_{n=0}^{p-1}\dbinom{2n}{n}\equiv\eta_{p}\operatorname{mod}p^{2}.
\]

\end{theorem}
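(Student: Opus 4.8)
The plan is to establish the congruence $\sum_{n=0}^{p-1}\binom{2n}{n}\equiv\eta_p \operatorname{mod} p^2$ by a generating-function argument combined with careful extraction of coefficients modulo $p^2$.

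The plan is to collapse the sum into a single coefficient of a rational function and then read off that coefficient modulo $p^{2}$.

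First I would use that $\dbinom{2n}{n}$ is the constant term of $\left((1+z)^{2}/z\right)^{n}=\bigl(z+2+z^{-1}\bigr)^{n}$. Summing the geometric series $\sum_{n=0}^{p-1}\bigl((1+z)^{2}/z\bigr)^{n}=\frac{w^{p}-1}{w-1}$ with $w=(1+z)^{2}/z$, and simplifying via $(1+z)^{2}-z=z^{2}+z+1$, yields the exact identity
\[
\sum_{n=0}^{p-1}\dbinom{2n}{n}=\left[z^{p-1}\right]\dfrac{(1+z)^{2p}-z^{p}}{z^{2}+z+1}.
\]
Here the fraction is genuinely a polynomial in $\mathbb{Z}[z]$: if $\omega$ is a primitive cube root of unity, then $(1+\omega)^{2}=\omega$, so $(1+\omega)^{2p}-\omega^{p}=0$, and hence $z^{2}+z+1$ divides the numerator. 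This is where the modulus $3$, and eventually $\eta_{p}$, enters.

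Next I would separate the main term from the $p$-correction. Writing $(1+z)^{p}=1+z^{p}+pR(z)$ with $R\in\mathbb{Z}[z]$ and squaring gives
\[
\dfrac{(1+z)^{2p}-z^{p}}{z^{2}+z+1}=\dfrac{1+z^{p}+z^{2p}}{z^{2}+z+1}+2p\,\dfrac{R(z)\,(1+z^{p})}{z^{2}+z+1}+p^{2}\,\dfrac{R(z)^{2}}{z^{2}+z+1}.
\]
Expanding $1/(z^{2}+z+1)=\sum_{k\ge0}\bigl(z^{3k}-z^{3k+1}\bigr)$ shows that the coefficient of $z^{p-1}$ in the first summand is exactly $\eta_{p}$, since only the constant $1$ in its numerator can reach degree $p-1$. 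The $p^{2}$-summand dies modulo $p^{2}$. So everything reduces to controlling $\bigl[z^{p-1}\bigr]\dfrac{R(z)(1+z^{p})}{z^{2}+z+1}$.

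That coefficient is the step I expect to be the main obstacle: read literally it is an alternating, character-weighted harmonic sum $\sum_{j=1}^{p-1}\bigl(\frac{p-j}{3}\bigr)\frac{(-1)^{j+1}}{j}$, which looks unpleasant to evaluate modulo $p$ directly. The clean way around it is to notice that $z^{2}+z+1$ already divides $R(z)$ itself: since $R(z)=\bigl((1+z)^{p}-1-z^{p}\bigr)/p$, evaluation at $\omega$ gives $R(\omega)=-\bigl(1+\omega^{p}+\omega^{2p}\bigr)/p=0$ for every prime $p>3$. Hence $R=(z^{2}+z+1)\,W$ with $\deg W=p-3$, the middle term equals $W(z)(1+z^{p})$, and its coefficient of $z^{p-1}$ is $\bigl[z^{p-1}\bigr]W=0$ because $\deg W<p-1$ while the $z^{p}W$ part only reaches degrees $\ge p$. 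This forces $\sum_{n=0}^{p-1}\dbinom{2n}{n}\equiv\eta_{p}\pmod{p^{2}}$. The single prime left out by the cube-root-of-unity argument is $p=3$, where $\eta_{3}=0$ and the congruence $1+2+6=9\equiv0\pmod{9}$ is immediate.
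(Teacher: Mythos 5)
Your proof is correct, but it takes a genuinely different route from the paper's. The paper argues purely with congruences between integer binomial coefficients: Lemma \ref{lem.A6} (built from Vandermonde convolution and the polynomial congruence Lemma \ref{lem.A4}, which in turn rests on the power-sum congruence Theorem \ref{thm.powersums.p}) shows that $\sum_{l=0}^{p-1}\binom{cp+2l}{l}\equiv\sum_{l=0}^{p-1}\binom{2l}{l}\operatorname{mod}p^{2}$ for every $c\in\mathbb{Z}$; taking $c=-1$ and applying upper negation (Proposition \ref{prop.binom.upper-neg}) turns the left-hand side into the alternating sum $\sum_{i=0}^{p-1}(-1)^{i}\binom{p-1-i}{i}$, which Corollary \ref{cor.ps2.2.S.-1} evaluates, for odd $p$, to exactly $\eta_{p}$. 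You instead collapse the whole sum into the single coefficient $[z^{p-1}]\bigl((1+z)^{2p}-z^{p}\bigr)/(z^{2}+z+1)$, expand $(1+z)^{p}=1+z^{p}+pR(z)$ by the freshman's dream, read off $\eta_{p}$ from the series $1/(z^{2}+z+1)=\sum_{k\ge0}(z^{3k}-z^{3k+1})$, and kill the cross term by the divisibility $z^{2}+z+1\mid R(z)$ (cube roots of unity) plus a degree count; all of these steps check out, including the nice observation that the cross term vanishes identically rather than merely modulo $p$. Your argument is essentially the Laurent-series/constant-term approach that the paper attributes to Stanton \cite{Stanto16}, and like that proof it needs $3\nmid p$ at the roots-of-unity step, forcing the separate (and correctly supplied) check at $p=3$, whereas the paper's argument treats all odd primes uniformly. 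What your route buys is brevity and an exact error term (for $p>3$ the discrepancy is literally $p^{2}$ times an integer); what the paper's route buys is reusable machinery (Lemmas \ref{lem.A1}, \ref{lem.A5}, \ref{lem.A6} and Theorem \ref{thm.bailey}) that it then leverages to prove the generalizations, Theorems \ref{thm.Z2} and \ref{thm.Z3}, which a one-shot coefficient extraction does not immediately yield.
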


The next one (\cite[Super-Conjecture 1\textquotedblright]{ApaZei16}) is a generalization:

\begin{theorem}
\label{thm.Z2}Let $p$ be an odd prime. Let $r\in\mathbb{N}$. Set%
\[
\alpha_{r}=\sum_{n=0}^{r-1}\dbinom{2n}{n}.
\]
Then,%
\[
\sum_{n=0}^{rp-1}\dbinom{2n}{n}\equiv\eta_{p}\alpha_{r}\operatorname{mod}%
p^{2}.
\]

\end{theorem}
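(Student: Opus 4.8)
The plan is to reduce Theorem \ref{thm.Z2} to Theorem \ref{thm.Z1} by exploiting the periodic structure of the sum modulo $p^2$. I would split the range $\{0,1,\ldots,rp-1\}$ into $r$ consecutive blocks of length $p$, writing
\[
\sum_{n=0}^{rp-1}\dbinom{2n}{n}=\sum_{k=0}^{r-1}\sum_{j=0}^{p-1}\dbinom{2\left(kp+j\right)}{kp+j}.
\]
The crux is then to understand each inner block $\sum_{j=0}^{p-1}\dbinom{2\left(kp+j\right)}{kp+j}$ modulo $p^2$. My first step would be to establish a block congruence of the shape
\[
\sum_{j=0}^{p-1}\dbinom{2kp+2j}{kp+j}\equiv\dbinom{2k}{k}\sum_{j=0}^{p-1}\dbinom{2j}{j}\operatorname{mod}p^{2}
\]
for every $k\in\mathbb{N}$. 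If this holds, then summing over $k$ and invoking Theorem \ref{thm.Z1} (which handles the $k=0$ block, giving $\sum_{j=0}^{p-1}\dbinom{2j}{j}\equiv\eta_p\operatorname{mod}p^2$) immediately yields
\[
\sum_{n=0}^{rp-1}\dbinom{2n}{n}\equiv\sum_{k=0}^{r-1}\dbinom{2k}{k}\eta_{p}=\eta_{p}\alpha_{r}\operatorname{mod}p^{2},
\]
which is exactly the claim.

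So the real work is proving the block congruence. The key tool should be Theorem \ref{thm.p2cong}, which controls $\dbinom{ap}{bp}$ modulo $p^2$, together with Lucas's theorem (Theorem \ref{thm.lucas}) for the off-diagonal terms. I would write the upper index $2kp+2j=2kp+2j$ and analyze $\dbinom{2kp+2j}{kp+j}$ by separating the contribution of the "digit" $2j$ in base $p$. When $2j<p$, Theorem \ref{thm.p2cong} applied to the leading part combined with a factorization of the binomial coefficient should give $\dbinom{2kp+2j}{kp+j}\equiv\dbinom{2k}{k}\dbinom{2j}{j}\operatorname{mod}p^2$; when $2j\geq p$ a carry occurs and the factorization shifts, so I expect to handle the two ranges $j<p/2$ and $j\geq p/2$ separately and then recombine. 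The aim throughout is to show that summed over a full period $j=0,\ldots,p-1$, the total matches $\dbinom{2k}{k}\sum_{j=0}^{p-1}\dbinom{2j}{j}$ modulo $p^2$.

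The hard part will be the carry analysis: $\dbinom{2n}{n}$ does \emph{not} factor cleanly as $\dbinom{2k}{k}\dbinom{2j}{j}$ term by term modulo $p^2$ once $2j\geq p$, because the central binomial coefficient straddles a base-$p$ boundary. I anticipate needing a lemma that expresses $\dbinom{2kp+2j}{kp+j}$ modulo $p^2$ in terms of $\dbinom{2k}{k}$ (or a neighboring value such as $\dbinom{2k+1}{k}$) times a correction, and then arguing that the corrections telescope or cancel when summed over the full residue block of length $p$. The guiding principle is that only the sum over a complete period is well-behaved modulo $p^2$, so I would resist trying to prove a term-wise statement and instead prove the aggregated block congruence directly, possibly by manipulating the generating-function identity of Proposition \ref{prop.binom.newton} for $\left(1+X\right)^{2kp+2j}$ and extracting the relevant coefficient modulo $p^2$.
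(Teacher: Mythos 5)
Your overall reduction is exactly the paper's route: split $\{0,1,\ldots,rp-1\}$ into $r$ blocks of length $p$, prove the block congruence $\sum_{j=0}^{p-1}\binom{2kp+2j}{kp+j}\equiv\binom{2k}{k}\eta_{p}\operatorname{mod}p^{2}$ (this is the paper's Lemma \ref{lem.C1} with $N=2K$), and sum over blocks; your deduction of the theorem from that congruence is correct. The problem is that the block congruence is where the entire content of the theorem lies, and your proposal leaves it unproven; worse, the intermediate steps you sketch are not just incomplete but partly false. The term-wise statement you expect in the no-carry range, namely $\binom{2kp+2j}{kp+j}\equiv\binom{2k}{k}\binom{2j}{j}\operatorname{mod}p^{2}$ when $2j<p$, already fails: for $p=3$, $k=1$, $j=1$ one has $\binom{8}{4}=70\equiv7\operatorname{mod}9$, while $\binom{2}{1}\binom{2}{1}=4$. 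Lucas's theorem gives such factorizations only modulo $p$, and Babbage's congruence (Theorem \ref{thm.p2cong}) controls only the single aligned term $\binom{Np}{Kp}$; neither gives the mod-$p^{2}$ control of the terms $\binom{Np}{Kp\pm i}$ with $1\leq i\leq p-1$ that the block congruence requires (these terms are divisible by $p$ by Lucas, but they get multiplied by factors $\binom{2l}{l-i}$ not divisible by $p$, so their residues modulo $p^{2}$ matter).

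What the paper actually does to close this gap has two ingredients, neither of which your proposal supplies or replaces. First, it expands $\binom{2Kp+2l}{Kp+l}$ by a Vandermonde-type identity (Lemma \ref{lem.A1a}) into $\binom{Np}{Kp}\binom{2l}{l}+\sum_{i=1}^{l}\left(\binom{Np}{Kp+i}+\binom{Np}{Kp-i}\right)\binom{2l}{l-i}$ with $N=2K$, and handles the paired off-diagonal terms by Bailey's mod-$p^{2}$ congruence $\binom{Np}{Kp+i}+\binom{Np}{Kp-i}\equiv N\binom{N}{K}\binom{p}{i}\operatorname{mod}p^{2}$ (Theorem \ref{thm.bailey} \textbf{(c)}), arriving at the per-term error $2K\binom{2K}{K}\left(\binom{p+2l}{l}-\binom{2l}{l}\right)$ (Lemma \ref{lem.A1}). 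Second, this error is in general nonzero modulo $p^{2}$ for individual $l$; the fact that it cancels upon summation over $l\in\{0,1,\ldots,p-1\}$ is Lemma \ref{lem.A6}, a substantial result in its own right, proven through polynomial congruences (Lemmas \ref{lem.A3}--\ref{lem.A5}) that ultimately rest on the power-sum congruence (Theorem \ref{thm.powersums.p}) and use the oddness of $p$ to cancel a factor of $2$. Your hope that the corrections \textquotedblleft telescope or cancel\textquotedblright\ over a full block is precisely this missing lemma, and no argument for it appears in your proposal; the closing suggestion to use generating functions (Proposition \ref{prop.binom.newton}) is likewise only a pointer, not an argument.
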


Theorem \ref{thm.Z1} and Theorem \ref{thm.Z2} both have been proven by Dennis
Stanton \cite{Stanto16} using Laurent series (in the case when $p\geq5$), and
by Liu \cite[(1.3)]{Liu16} using harmonic numbers. We shall reprove them elementarily.

The third conjecture that we shall prove is \cite[Super-Conjecture
5']{ApaZei16}:

\begin{theorem}
\label{thm.Z3}Let $p$ be an odd prime. Let $r\in\mathbb{N}$ and $s\in
\mathbb{N}$. Set
\[
\epsilon_{r,s}=\sum_{m=0}^{r-1}\sum_{n=0}^{s-1}\dbinom{n+m}{m}^{2}.
\]
Then,
\[
\sum_{n=0}^{rp-1}\sum_{m=0}^{sp-1}\dbinom{n+m}{m}^{2}\equiv\eta_{p}%
\epsilon_{r,s}\operatorname{mod}p^{2}.
\]

\end{theorem}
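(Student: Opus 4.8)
The plan is to reduce the double sum to the first-order (single-variable) congruences, exploiting the factorization structure $\binom{n+m}{m}^2$ and the product form of the summation ranges $rp$ and $sp$.
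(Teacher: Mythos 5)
Your proposal names a strategy but contains no argument, and the strategy as stated runs straight into the one obstacle that constitutes the entire difficulty of the theorem. Writing $n=Lp+v$ and $m=Kp+u$ and hoping that $\dbinom{n+m}{m}^{2}$ ``factors'' into a product of a top-level term and a residue-level term is exactly the Freshman's-Dream heuristic; but Lucas's theorem only gives $\dbinom{n+m}{m}\equiv\dbinom{K+L}{K}\dbinom{u+v}{u}\operatorname{mod}p$, a congruence modulo $p$, while the theorem demands modulo $p^{2}$. Squaring does not repair this: if $a\equiv b\operatorname{mod}p$, then $a^{2}-b^{2}\equiv 2\left(a-b\right)b\operatorname{mod}p^{2}$, and the cross term $2\left(a-b\right)b$ is only guaranteed divisible by $p$, not by $p^{2}$. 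So the reduction you describe produces an error term of order $p$ in every summand, and the whole problem is to show that these error terms cancel when summed over a full period. Your proposal does not mention this issue, let alone resolve it.

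For comparison, in the paper this cancellation is the substance of the proof: Lemma \ref{lem.Aab} isolates the cross term; Lemma \ref{lem.A7} proves the key cancellation $\sum_{l=0}^{p-1}\sum_{m=0}^{l}\left(\dbinom{Np+l}{Kp+m}-\dbinom{N}{K}\dbinom{l}{m}\right)\dbinom{l}{m}\equiv0\operatorname{mod}p^{2}$, which in turn rests on Vandermonde convolution (Corollary \ref{cor.vandermonde.2}), Bailey's congruence (Theorem \ref{thm.bailey}), Babbage's theorem (Theorem \ref{thm.p2cong}), and the power-sum/polynomial lemmas (Lemmas \ref{lem.A3}--\ref{lem.A6}). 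A further point your plan silently skips: after substituting $n=Lp+v$, $m=Kp+u$, the binomial $\dbinom{\left(K+L\right)p+u+v}{Kp+u}$ has ``carry'' terms with $u+v\geq p$, which do not match the shape required by Lucas-type factorizations; the paper needs Lemma \ref{lem.A9a} to show these terms are divisible by $p$ (hence their squares by $p^{2}$) before the double sum can be rearranged into the form handled by Lemma \ref{lem.A8}. Until you supply an argument for the modulo-$p^{2}$ cancellation of the cross terms and for the carry terms, you do not have a proof.
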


A proof of Theorem \ref{thm.Z3} has been found by Amdeberhan and Tauraso, and
was outlined in \cite[\S 6]{AmdTau16}; we give a different, elementary proof.

\section{The proofs}

\subsection{Identities and congruences from the literature}

Before we come to the proofs of Theorems \ref{thm.Z1}, \ref{thm.Z2} and
\ref{thm.Z3}, let us collect various well-known results that will prove useful.

The following properties of binomial coefficients are well-known (see, e.g.,
\cite[\S 3.1]{detnotes} and \cite[\S 1]{lucas}):

\begin{proposition}
\label{prop.binom.00}We have $\dbinom{m}{0}=1$ for every $m\in\mathbb{Z}$.
\end{proposition}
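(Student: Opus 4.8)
The plan is to prove this directly from Definition \ref{def.binom}, since $0 \in \mathbb{N}$ and $m \in \mathbb{Z}$, so the definition applies verbatim with $n = 0$. Substituting $n = 0$ into the formula
\[
\dbinom{m}{n} = \dfrac{m(m-1)\cdots(m-n+1)}{n!}
\]
yields a numerator that is the product $m(m-1)\cdots(m+1)$, i.e., the product of the integers from $m$ down to $m - 0 + 1 = m + 1$. This is an \emph{empty product} (it has zero factors), and hence equals $1$ by the usual convention for empty products. The denominator is $0! = 1$. Therefore $\dbinom{m}{0} = 1/1 = 1$, which is the claim.

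The only point requiring any care is the interpretation of the numerator as an empty product when $n = 0$; everything else is immediate. Since the statement holds uniformly for every $m \in \mathbb{Z}$ (the empty-product argument does not depend on the sign or size of $m$), no case distinction is needed, and in particular Definition \ref{def.binom.negative} is never invoked. I expect no genuine obstacle here: the result is an immediate unwinding of the definition, and its only role is to serve as a convenient reference for the computations in the sequel.
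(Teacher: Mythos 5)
Your proof is correct: with $n=0$, the numerator in Definition \ref{def.binom} is an empty product (hence $1$) and the denominator is $0!=1$, so $\dbinom{m}{0}=1$ for every $m\in\mathbb{Z}$, with no case distinction and no need for Definition \ref{def.binom.negative}. The paper itself gives no proof of this proposition --- it simply cites it as well-known (from \cite[\S 3.1]{detnotes}) --- and your direct unwinding of the definition is exactly the standard argument behind that citation.
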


\begin{verlong}
Proposition \ref{prop.binom.00} is a particular case of \cite[Proposition 3.3
\textbf{(a)}]{detnotes}.
\end{verlong}

\begin{proposition}
\label{prop.binom.0}We have $\dbinom{m}{n}=0$ for every $m\in\mathbb{N}$ and
$n\in\mathbb{N}$ satisfying $m<n$.
\end{proposition}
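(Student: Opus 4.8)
The plan is to read the vanishing off directly from Definition \ref{def.binom}. Fix $m\in\mathbb{N}$ and $n\in\mathbb{N}$ with $m<n$. By Definition \ref{def.binom}, we have
\[
\dbinom{m}{n}=\dfrac{m\left(m-1\right)\cdots\left(m-n+1\right)}{n!},
\]
so the numerator is the product $\prod_{k=0}^{n-1}\left(m-k\right)$ of the $n$ consecutive factors $m-0,m-1,\ldots,m-\left(n-1\right)$.

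First I would observe that the index $k=m$ falls within the range $\left\{0,1,\ldots,n-1\right\}$: indeed, $m\geq0$ since $m\in\mathbb{N}$, and $m\leq n-1$ since $m<n$. Hence one of the factors in the product is $m-m=0$, which forces the numerator to vanish. Since $n!\neq0$, dividing by $n!$ preserves the value $0$, and we conclude that $\dbinom{m}{n}=0$, as desired.

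There is essentially no obstacle here; the only point requiring care is to confirm that the zero factor $m-m$ genuinely occurs among $m,m-1,\ldots,m-n+1$, which is exactly what the hypotheses $m\in\mathbb{N}$ and $m<n$ together guarantee. As an alternative route, one could instead invoke Proposition \ref{prop.binom.newton}: since $m\in\mathbb{N}$, the power $\left(1+X\right)^{m}$ is a polynomial of degree $m$, so its coefficient of $X^{n}$ vanishes whenever $n>m$; but that coefficient is precisely $\dbinom{m}{n}$.
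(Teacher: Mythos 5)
Your proof is correct: since $m\in\mathbb{N}$ and $m<n$, the index $k=m$ indeed lies in $\left\{ 0,1,\ldots,n-1\right\} $, so the factor $m-m=0$ appears in the numerator $m\left(m-1\right)\cdots\left(m-n+1\right)$ of Definition \ref{def.binom}, forcing $\dbinom{m}{n}=0$. The paper itself gives no proof of Proposition \ref{prop.binom.0} (it merely cites it as well-known from the literature), and your zero-factor argument is precisely the standard one that such references use, so nothing more needs to be said; your alternative via Proposition \ref{prop.binom.newton} is also valid.
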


\begin{verlong}
Proposition \ref{prop.binom.0} is \cite[Proposition 3.6]{detnotes}.
\end{verlong}

\begin{proposition}
\label{prop.binom.symm}We have $\dbinom{m}{n}=\dbinom{m}{m-n}$ for any
$m\in\mathbb{N}$ and $n\in\mathbb{N}$ satisfying $m\geq n$.
\end{proposition}

\begin{verlong}
Proposition \ref{prop.binom.symm} is \cite[Proposition 3.8]{detnotes}.
\end{verlong}

\begin{proposition}
\label{prop.binom.mm}We have $\dbinom{m}{m}=1$ for every $m\in\mathbb{N}$.
\end{proposition}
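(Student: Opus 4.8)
The plan is to reduce this to the two facts established immediately above, namely Proposition \ref{prop.binom.00} and Proposition \ref{prop.binom.symm}. First I would invoke Proposition \ref{prop.binom.symm} with $n=m$; this is legitimate, since its hypothesis $m\geq n$ holds here with equality. It rewrites $\dbinom{m}{m}$ as $\dbinom{m}{m-m}=\dbinom{m}{0}$. Then I would apply Proposition \ref{prop.binom.00} to conclude that $\dbinom{m}{0}=1$, which gives the claim.

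Alternatively, one can argue straight from Definition \ref{def.binom} with $n=m$: the numerator $m\left(m-1\right)\cdots\left(m-m+1\right)$ is precisely the product of all integers from $1$ to $m$, i.e.\ $m!$, while the denominator is $m!$ as well, so the quotient is $1$. I expect no genuine obstacle here; the only point that warrants a moment's care is the degenerate case $m=0$, where the numerator is an empty product (equal to $1$) and the denominator is $0!=1$, so that $\dbinom{0}{0}=1$ still holds. Since the symmetry-based argument subsumes this case uniformly (Proposition \ref{prop.binom.00} is stated for every $m\in\mathbb{Z}$), it is probably the cleaner one to write up.
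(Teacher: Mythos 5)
Your proof is correct. Note that the paper itself does not prove this proposition at all: it is quoted as a well-known fact, with a pointer to the literature (\cite[Proposition 3.9]{detnotes}), so any self-contained argument is by definition a different route. Your primary argument is sound: Proposition \ref{prop.binom.symm} applies with $n=m$ (its hypothesis $m\geq n$ holds with equality), giving $\dbinom{m}{m}=\dbinom{m}{m-m}=\dbinom{m}{0}$, and Proposition \ref{prop.binom.00} finishes the job; there is no circularity here, since the symmetry identity is established independently of the present claim in the cited source, and within this paper both propositions are simply quoted facts available for use. Your fallback argument straight from Definition \ref{def.binom} is equally valid: with $n=m$ the numerator $m\left(m-1\right)\cdots\left(m-m+1\right)$ is exactly $m!$, matching the denominator, and you correctly flag that the case $m=0$ survives via the empty-product convention together with $0!=1$. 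The derivation from the neighboring propositions is the tidier write-up, as you say; the direct computation has the merit of depending on nothing but the definition. Either way, you have supplied a proof where the paper only supplies a citation.
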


\begin{verlong}
Proposition \ref{prop.binom.mm} is \cite[Proposition 3.9]{detnotes}.
\end{verlong}

\begin{proposition}
\label{prop.binom.upper-neg}We have%
\[
\dbinom{m}{n}=\left(  -1\right)  ^{n}\dbinom{n-m-1}{n}%
\]
for any $m\in\mathbb{Z}$ and $n\in\mathbb{N}$.
\end{proposition}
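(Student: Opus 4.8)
The plan is to prove the identity by direct computation from Definition \ref{def.binom}, noting that both binomial coefficients appearing have lower index $n\in\mathbb{N}$ and hence are governed by that definition (the upper index $n-m-1$ is an integer, which is permitted). Both sides are fractions with denominator $n!$, so it suffices to compare the numerators after accounting for the factor $\left(-1\right)^{n}$.

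First I would expand the left-hand side as
\[
\dbinom{m}{n}=\frac{m\left(m-1\right)\cdots\left(m-n+1\right)}{n!}=\frac{1}{n!}\prod_{i=0}^{n-1}\left(m-i\right),
\]
a product of $n$ factors. Then I would expand the binomial coefficient on the right,
\[
\dbinom{n-m-1}{n}=\frac{\left(n-m-1\right)\left(n-m-2\right)\cdots\left(-m\right)}{n!}=\frac{1}{n!}\prod_{j=0}^{n-1}\left(n-m-1-j\right),
\]
again a product of $n$ factors; here I have used that the final factor equals $\left(n-m-1\right)-\left(n-1\right)=-m$.

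The key step is to match these two products. I would pull a factor of $-1$ out of each of the $n$ terms of the second product, writing $n-m-1-j=-\left(m-\left(n-1-j\right)\right)$, and then reindex via $i=n-1-j$. As $j$ runs through $0,1,\ldots,n-1$, the new index $i$ runs through $n-1,\ldots,1,0$, so the product becomes $\left(-1\right)^{n}\prod_{i=0}^{n-1}\left(m-i\right)$. This gives $\dbinom{n-m-1}{n}=\left(-1\right)^{n}\dbinom{m}{n}$, and multiplying both sides by $\left(-1\right)^{n}$ yields the claim.

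The only place that demands care — and the one spot where a sign slip could occur — is the reindexing that turns $\left(n-m-1-j\right)$ into $-\left(m-i\right)$; everything else is formal manipulation of the defining products. No edge cases need separate treatment: the statement is a polynomial identity in $m$ valid for every $m\in\mathbb{Z}$ and every $n\in\mathbb{N}$, including $n=0$, where both sides reduce to $1$ via the empty-product convention.
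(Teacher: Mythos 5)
Your proof is correct. There is nothing in the paper to compare it against: the paper gives no proof of Proposition \ref{prop.binom.upper-neg} at all, merely citing it as a well-known fact (a particular case of \cite[Proposition 3.16]{detnotes}). Your direct computation from Definition \ref{def.binom} is the standard argument for this ``upper negation'' identity: both sides are products of $n$ consecutive factors divided by $n!$; the factors of $\dbinom{n-m-1}{n}$ run from $n-m-1$ down to $-m$; pulling $-1$ out of each of the $n$ factors via $n-m-1-j=-\bigl(m-\left(n-1-j\right)\bigr)$ and reindexing $i=n-1-j$ turns that product into $\left(-1\right)^{n}\prod_{i=0}^{n-1}\left(m-i\right)$, giving $\dbinom{n-m-1}{n}=\left(-1\right)^{n}\dbinom{m}{n}$ and hence the claim after multiplying by $\left(-1\right)^{n}$. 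The sign bookkeeping you flag as the delicate point is handled correctly, and the remark that $n=0$ needs no special treatment (both sides are $1$ by the empty-product convention) is also right, so your argument is a complete, self-contained proof of what the paper leaves to the references.
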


\begin{verlong}
Proposition \ref{prop.binom.upper-neg} is a particular case of
\cite[Proposition 3.16]{detnotes}.
\end{verlong}

\begin{verlong}
The following fact is the classical recurrence relation for the binomial coefficients:
\end{verlong}

\begin{proposition}
\label{prop.binom.rec.neg}We have%
\[
\dbinom{m}{n}=\dbinom{m-1}{n-1}+\dbinom{m-1}{n}%
\]
for any $m\in\mathbb{Z}$ and $n\in\mathbb{Z}$.
\end{proposition}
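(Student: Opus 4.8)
The plan is to deduce the recurrence from the generating-function description of binomial coefficients in Proposition \ref{prop.binom.newton}, which holds for all integer upper and lower indices and therefore avoids any case distinction on the sign of $n$. The key observation is the factorization
\[
\left(  1+X\right)  ^{m}=\left(  1+X\right)  \left(  1+X\right)  ^{m-1}=\left(
1+X\right)  ^{m-1}+X\left(  1+X\right)  ^{m-1}
\]
in the ring $\mathbb{Z}\left[  \left[  X\right]  \right]  $ of formal power series. This identity is valid for every $m\in\mathbb{Z}$, since $1+X$ is a unit in $\mathbb{Z}\left[  \left[  X\right]  \right]  $ and the powers $\left(  1+X\right)  ^{m}$ obey the usual exponent law; the case of negative $m$ is thus included without any extra effort.

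First I would extract the coefficient of $X^{n}$ from both sides. By Proposition \ref{prop.binom.newton}, the coefficient of $X^{n}$ in $\left(  1+X\right)  ^{m}$ is $\dbinom{m}{n}$, and the coefficient of $X^{n}$ in $\left(  1+X\right)  ^{m-1}$ is $\dbinom{m-1}{n}$. For the remaining term, multiplication by $X$ shifts coefficients, so the coefficient of $X^{n}$ in $X\left(  1+X\right)  ^{m-1}$ equals the coefficient of $X^{n-1}$ in $\left(  1+X\right)  ^{m-1}$, which is $\dbinom{m-1}{n-1}$ by the same proposition. Adding these up yields exactly $\dbinom{m}{n}=\dbinom{m-1}{n-1}+\dbinom{m-1}{n}$.

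The one point that needs care --- and the closest thing to an obstacle --- is the bookkeeping around the conventions for out-of-range indices, in particular at the boundary $n=0$. There the shift-by-$X$ step produces the coefficient of $X^{-1}$, which is $0$ by the convention in Proposition \ref{prop.binom.newton}; this matches $\dbinom{m-1}{-1}=0$ from Definition \ref{def.binom.negative}, so the two conventions are consistent and the argument goes through uniformly. For $n<0$ all three coefficients vanish and the identity just reads $0=0+0$.

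Should one prefer an argument directly from Definition \ref{def.binom}, I would instead split into the cases $n<0$, $n=0$, and $n\geq1$. The first two are immediate from Definitions \ref{def.binom.negative} and \ref{def.binom} (together with Proposition \ref{prop.binom.00}). For $n\geq1$ I would write out the product formula, pull the common factor $\left(  m-1\right)  \left(  m-2\right)  \cdots\left(  m-n+1\right)  /n!$ out of both summands on the right (using $1/\left(  n-1\right)  !=n/n!$), and then simplify the bracketed factor $n+\left(  m-n\right)  =m$ to recover $\dbinom{m}{n}$. This is purely mechanical, the only subtlety being the degenerate reading of the products when $n=1$, so I would favor the generating-function proof as the cleaner route.
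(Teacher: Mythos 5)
Your proof is correct, and both routes you describe go through. The coefficient-extraction argument is valid for all $m,n\in\mathbb{Z}$: since $1+X$ is a unit in $\mathbb{Z}\left[\left[X\right]\right]$, the factorization $\left(1+X\right)^{m}=\left(1+X\right)^{m-1}+X\left(1+X\right)^{m-1}$ holds for negative $m$ as well, and your bookkeeping at the boundary (the coefficient of $X^{-1}$ vanishing in agreement with $\dbinom{m-1}{-1}=0$) is exactly the point that needed checking. For comparison: the paper itself gives no proof of this proposition; it quotes it as the classical recurrence and defers to \cite[Proposition 1.10]{lucas}, where the argument is the elementary one you sketch in your final paragraph --- a case distinction on $n<0$, $n=0$, $n\geq1$ followed by manipulation of the product formula from Definition \ref{def.binom}. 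So your primary argument is a genuinely different route from the one the paper endorses. What it buys: no case analysis, with the negative-$n$ convention absorbed uniformly into the convention that coefficients of negative powers of $X$ are zero. What it costs: a dependency on Proposition \ref{prop.binom.newton}, which the paper states without proof; in a fully self-contained development one would have to make sure that proposition (especially for negative exponents) is not itself established by induction via the very recurrence being proved, since otherwise the argument becomes circular. The elementary computation avoids that concern entirely, which is presumably why the cited reference proceeds that way; within this paper's framework, where Proposition \ref{prop.binom.newton} is a freestanding stated fact, your derivation is legitimate.
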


\begin{verlong}
See \cite[Proposition 1.10]{lucas} for the straightforward proof of
Proposition \ref{prop.binom.rec.neg}.
\end{verlong}

\begin{proposition}
\label{prop.vandermonde.consequences}For every $x\in\mathbb{Z}$ and
$y\in\mathbb{Z}$ and $n\in\mathbb{N}$, we have
\[
\dbinom{x+y}{n}=\sum_{k=0}^{n}\dbinom{x}{k}\dbinom{y}{n-k}.
\]

\end{proposition}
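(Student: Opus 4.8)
The plan is to derive this identity from the generating-function description of binomial coefficients provided by Proposition \ref{prop.binom.newton}, exploiting the multiplicativity of the map $n\mapsto(1+X)^{n}$. First I would record that, by Proposition \ref{prop.binom.newton}, the left-hand side $\dbinom{x+y}{n}$ is precisely the coefficient of $X^{n}$ in the formal power series $(1+X)^{x+y}\in\mathbb{Z}[[X]]$. The core of the argument is then the exponent law
\[
(1+X)^{x+y}=(1+X)^{x}(1+X)^{y}\qquad\text{in }\mathbb{Z}[[X]].
\]
For nonnegative $x$ and $y$ this is immediate, but here $x$ and $y$ are arbitrary integers, so $(1+X)^{x}$ must be read as the $x$-th power of the \emph{unit} $1+X$ in the ring $\mathbb{Z}[[X]]$ (a negative power being the inverse of the corresponding positive power). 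With this reading the identity holds because integer powers of any fixed unit $u$ in a commutative ring obey the usual rule $u^{a}u^{b}=u^{a+b}$, which is exactly the interpretation of $(1+X)^{n}$ implicit in Proposition \ref{prop.binom.newton}.

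Next I would extract the coefficient of $X^{n}$ from the right-hand side via the Cauchy product rule for multiplying power series. Since $(1+X)^{x}=\sum_{k\geq0}\dbinom{x}{k}X^{k}$ and $(1+X)^{y}=\sum_{j\geq0}\dbinom{y}{j}X^{j}$ (again by Proposition \ref{prop.binom.newton}), the coefficient of $X^{n}$ in their product is $\sum_{k=0}^{n}\dbinom{x}{k}\dbinom{y}{n-k}$. I would note in passing that here the index $n-k$ ranges over $n,n-1,\ldots,0$, so every binomial coefficient in the sum has nonnegative lower index and no appeal to Definition \ref{def.binom.negative} is needed. Comparing the two expressions for the coefficient of $X^{n}$ gives the claim.

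The main obstacle, and essentially the only step demanding care, is justifying the exponent law for negative exponents rather than merely for $x,y\in\mathbb{N}$: one must be certain that ``$(1+X)^{x}$'' in Proposition \ref{prop.binom.newton} genuinely denotes the $x$-th power of the unit $1+X$ (so that $u^{a}u^{b}=u^{a+b}$ applies), and not some separately defined series that happens to agree with it only for nonnegative $x$. A completely self-contained alternative that bypasses formal power series altogether is the polynomial-identity route: for each fixed $n$, both sides are polynomials in $x$ and $y$ over $\mathbb{Q}$ of total degree $n$; they coincide for all $x,y\in\mathbb{N}$ by the classical combinatorial Vandermonde identity (count the $n$-element subsets of the disjoint union of an $x$-element set and a $y$-element set), and a polynomial identity in two variables that holds on the infinite grid $\mathbb{N}\times\mathbb{N}$ must hold identically, hence for all integer (indeed rational) values of $x$ and $y$.
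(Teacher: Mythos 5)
Your proof is correct. Note first that the paper contains no proof of Proposition \ref{prop.vandermonde.consequences} at all: it is dispatched with a citation to \cite[Theorem 3.29]{detnotes}, so there is no internal argument to compare against, and any complete proof is a genuine addition. Your main route is sound: by Proposition \ref{prop.binom.newton}, $\dbinom{x+y}{n}$ is the coefficient of $X^{n}$ in $(1+X)^{x+y}$, the exponent law $(1+X)^{x+y}=(1+X)^{x}(1+X)^{y}$ holds for arbitrary integer powers of the unit $1+X$ in $\mathbb{Z}[[X]]$ (integer powers of a fixed unit form a homomorphic image of $\mathbb{Z}$), and the Cauchy product extracts exactly $\sum_{k=0}^{n}\dbinom{x}{k}\dbinom{y}{n-k}$; the interpretive worry you flag is resolved in your favor, since the inverse-power reading is the only one under which Proposition \ref{prop.binom.newton} can make sense for negative $n$ (the paper even records the series form $(1+X)^{n}=\sum_{m\in\mathbb{N}}\dbinom{n}{m}X^{m}$ for all $n\in\mathbb{Z}$ in a variant statement). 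The one structural caveat is that Proposition \ref{prop.binom.newton} is itself stated in the paper without proof, so your argument shifts the burden onto another unproven auxiliary fact; this is legitimate within the paper's framework, but it makes your fallback argument valuable: the polynomial-identity route (both sides are polynomials in $x$ and $y$ of degree $n$, they agree on $\mathbb{N}\times\mathbb{N}$ by the subset-counting argument, and a two-variable polynomial vanishing on $\mathbb{N}\times\mathbb{N}$ vanishes identically, by fixing one variable at a time) is fully self-contained and is the standard way such identities are extended from nonnegative integers to all integers---very likely the substance of the proof in the cited reference. In short: the generating-function route buys brevity at the price of leaning on Proposition \ref{prop.binom.newton}; the polynomial route buys independence from any unproven statement in the paper.
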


Proposition \ref{prop.vandermonde.consequences} is the so-called
\textit{Vandermonde convolution identity}, and is a particular case of
\cite[Theorem 3.29]{detnotes}.

\begin{corollary}
\label{cor.ps2.2.S.-1}For each $n\in\mathbb{N}$, we have%
\[
\sum_{i=0}^{n-1}\left(  -1\right)  ^{i}\dbinom{n-1-i}{i}=\left(  -1\right)
^{n}\cdot%
\begin{cases}
0, & \text{if }n\equiv0\operatorname{mod}3;\\
-1, & \text{if }n\equiv1\operatorname{mod}3;\\
1, & \text{if }n\equiv2\operatorname{mod}3
\end{cases}
.
\]

\end{corollary}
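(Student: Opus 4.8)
The plan is to recognize the sum $\sum_{i=0}^{n-1}(-1)^i\binom{n-1-i}{i}$ as a shifted Fibonacci-type quantity and evaluate it via a generating function or a recurrence. The cleanest route I would take is to set $f(n)=\sum_{i\geq 0}(-1)^i\binom{n-1-i}{i}$ (the upper limit $n-1$ is harmless, since for $i\geq n/2$ the binomial coefficient $\binom{n-1-i}{i}$ vanishes by Proposition \ref{prop.binom.0}, as then $n-1-i<i$). I would then prove that $f$ satisfies the recurrence $f(n)=f(n-1)-f(n-2)$ for all $n\geq 2$, with initial values determined directly from the definition.

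To establish the recurrence, first I would apply Pascal's rule (Proposition \ref{prop.binom.rec.neg}) in the form $\binom{n-1-i}{i}=\binom{n-2-i}{i}+\binom{n-2-i}{i-1}$ to each term. Summing against $(-1)^i$ splits $f(n)$ into two sums: the first is $\sum_i(-1)^i\binom{n-2-i}{i}=f(n-1)$; the second is $\sum_i(-1)^i\binom{(n-1)-1-i}{i-1}$, which after the reindexing $j=i-1$ becomes $-\sum_j(-1)^j\binom{(n-2)-1-j}{j}=-f(n-2)$. Here I must handle the boundary terms of the summation ranges carefully, but because the extra binomial coefficients vanish (again by Proposition \ref{prop.binom.0}), the telescoping of index bounds causes no trouble. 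This yields $f(n)=f(n-1)-f(n-2)$.

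The sequence $g(n)=f(n-1)-f(n-2)$ defines the function $1,1,0,-1,-1,0,1,1,0,\dots$ once the initial values $f(0),f(1)$ are pinned down; concretely the recurrence $f(n)=f(n-1)-f(n-2)$ has period $6$, and I would simply check that $f$ agrees with the claimed piecewise formula on two consecutive values of $n$ and invoke the recurrence to propagate it. Explicitly, I would verify $f(0)$ and $f(1)$ (or $f(1)$ and $f(2)$) from the definition: $f(1)=\binom{0}{0}=1$ and $f(2)=\binom{1}{0}-\binom{0}{1}=1$, then observe that the target right-hand side $(-1)^n\cdot\{0,-1,1\}_{n\bmod 3}$ obeys the same recurrence $h(n)=h(n-1)-h(n-2)$ and matches $f$ at these seed values, so induction forces $f=h$ for all $n\geq 1$ (and the case $n=0$ is an easy separate check, where both sides are $0$).

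The main obstacle I anticipate is purely bookkeeping: making sure the reindexing of the Pascal-rule decomposition is valid across the summation endpoints, i.e. that the terms I drop or add when shifting the index $i$ are genuinely zero. Because the paper works with the extended binomial coefficients of Definitions \ref{def.binom} and \ref{def.binom.negative}, and Proposition \ref{prop.binom.0} guarantees vanishing outside the combinatorial range, these boundary contributions all collapse, so the recurrence holds cleanly. No deep idea is needed beyond the Fibonacci-like recurrence; the only care required is in the index manipulation and in confirming that the stated right-hand side is indeed the unique solution of the recurrence with the computed initial conditions.
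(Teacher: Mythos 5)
Your overall strategy — Pascal's rule yields $f(n)=f(n-1)-f(n-2)$, then induction from two seed values since the right-hand side satisfies the same recurrence with period $6$ — is sound, and it is worth noting that the paper itself gives no proof of this corollary at all (it cites \cite[Corollary 8.63]{detnotes} and the references \cite{GKP}, \cite{BenQui03}, \cite{BenQui08}), so a self-contained recurrence argument is a legitimate alternative route. However, there is a genuine error in precisely the step you dismiss as "purely bookkeeping". You extend the sum to $\sum_{i\geq 0}$ on the grounds that all terms with $i\geq n/2$ vanish by Proposition \ref{prop.binom.0}. That proposition only applies when the upper index $n-1-i$ lies in $\mathbb{N}$, i.e., when $i\leq n-1$. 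For $i\geq n$ the upper index is negative, and under Definition \ref{def.binom} such binomial coefficients are generally \emph{nonzero}: $\dbinom{-1}{k}=\left(-1\right)^{k}$, $\dbinom{-2}{2}=3$, etc. Hence your $f(n)=\sum_{i\geq0}\left(-1\right)^{i}\dbinom{n-1-i}{i}$ has infinitely many nonzero terms and is not even well-defined. The same issue resurfaces if you instead keep the finite range $0\leq i\leq n-1$ and apply Pascal's rule term by term: the stray boundary terms are $\left(-1\right)^{n-1}\dbinom{-1}{n-1}=+1$ in the first sum and, after reindexing, $-\left(-1\right)^{n-2}\dbinom{-1}{n-2}=-1$ in the second. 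These cancel, so the recurrence is still true, but they do \emph{not} vanish, contrary to your claim that Proposition \ref{prop.binom.0} kills all boundary contributions.

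The repair is small. For $n\geq2$, first discard the $i=n-1$ term of $f(n)$, which genuinely is zero: it equals $\left(-1\right)^{n-1}\dbinom{0}{n-1}$, and Proposition \ref{prop.binom.0} applies because $0<n-1$ with both entries in $\mathbb{N}$. This gives $f(n)=\sum_{i=0}^{n-2}\left(-1\right)^{i}\dbinom{n-1-i}{i}$. Now apply Proposition \ref{prop.binom.rec.neg} and split: the first sum is exactly $f(n-1)$ by definition; in the second sum the $i=0$ term is $\dbinom{n-2}{-1}=0$ by Definition \ref{def.binom.negative}, and the substitution $j=i-1$ turns the rest into $-\sum_{j=0}^{n-3}\left(-1\right)^{j}\dbinom{(n-2)-1-j}{j}=-f(n-2)$, with no leftover terms. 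With the recurrence established this way, your verification of the seed values and the observation that the claimed right-hand side obeys the same recurrence (a finite check over the six residues of $n$ modulo $6$) complete the proof correctly.
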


Corollary \ref{cor.ps2.2.S.-1} is \cite[Corollary 8.63]{detnotes}. Apart from
that, Corollary \ref{cor.ps2.2.S.-1} can be easily derived from \cite[\S 5.2,
Problem 3]{GKP}, \cite[Identity 172]{BenQui03} or \cite{BenQui08}.

Another simple identity (sometimes known as the ``absorption identity'') is
the following:

\begin{proposition}
\label{prop.binom.absorb}Let $n\in\mathbb{Z}$ and $k\in\mathbb{Z}$. Then,
$k\dbinom{n}{k}=n\dbinom{n-1}{k-1}$.
\end{proposition}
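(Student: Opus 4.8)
The plan is to prove the identity by a case distinction on the sign of $k$, since the two binomial coefficients appearing are governed by different clauses of the setup: the explicit factorial formula of Definition \ref{def.binom} applies when the lower index is a nonnegative integer, whereas the vanishing convention of Definition \ref{def.binom.negative} takes over when the lower index is negative. Because $k$ ranges over all of $\mathbb{Z}$, I cannot blindly manipulate factorials, so the first job is to pin down where each clause is in force.

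First I would dispose of the case $k\leq 0$. Here $k-1\leq -1$ is a negative integer, so $\dbinom{n-1}{k-1}=0$ by Definition \ref{def.binom.negative}, whence the right-hand side $n\dbinom{n-1}{k-1}$ vanishes. For the left-hand side: if $k<0$, then $\dbinom{n}{k}=0$ (again by Definition \ref{def.binom.negative}); and if $k=0$, then the leading factor $k$ is itself zero. Either way $k\dbinom{n}{k}=0$. Thus both sides equal $0$, and the identity holds in this case.

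It remains to treat the case $k\geq 1$, where $k$ and $k-1$ are both nonnegative integers, so Definition \ref{def.binom} applies to both binomial coefficients and I may simply expand and compare. Using $k/k!=1/(k-1)!$, the left-hand side becomes
\[
k\dbinom{n}{k}=k\cdot\frac{n(n-1)\cdots(n-k+1)}{k!}=\frac{n(n-1)\cdots(n-k+1)}{(k-1)!}.
\]
On the other hand, the numerator of $\dbinom{n-1}{k-1}$ runs from $n-1$ down to $(n-1)-(k-1)+1=n-k+1$, so
\[
n\dbinom{n-1}{k-1}=n\cdot\frac{(n-1)(n-2)\cdots(n-k+1)}{(k-1)!}=\frac{n(n-1)\cdots(n-k+1)}{(k-1)!},
\]
which is exactly the same expression. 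This settles the remaining case.

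I do not expect any genuine obstacle here, as the computation in the case $k\geq 1$ is a one-line cancellation. The only point requiring care is the bookkeeping in the case $k\leq 0$: there one must invoke the convention of Definition \ref{def.binom.negative} rather than the factorial formula, and verify that the two sides vanish for a slightly different reason when $k<0$ (a zero binomial coefficient) than when $k=0$ (a zero leading factor). Keeping these two sub-cases straight is the whole of the subtlety.
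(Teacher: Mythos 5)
Your proof is correct and follows essentially the same route as the paper's own argument: a case split on the sign of $k$, with the nonpositive cases handled by the vanishing convention of Definition \ref{def.binom.negative} (or the zero factor $k$ when $k=0$), and the positive case settled by expanding both sides via Definition \ref{def.binom} and cancelling $k$ into $k!$. The paper merely presents your case $k\leq 0$ as two separate cases, so there is no substantive difference.
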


Proposition~\ref{prop.binom.absorb} appears in \cite[(5.6)]{GKP}, and is
easily proven just from the definition of binomial coefficients.

\begin{verlong}
\begin{proof}
[Proof of Proposition \ref{prop.binom.absorb}.]We are in one of the following
three cases:

\textit{Case 1:} We have $k<0$.

\textit{Case 2:} We have $k=0$.

\textit{Case 3:} We have $k>0$.

Let us first consider Case 1. In this case, we have $k<0$. Thus, $k$ is a
negative integer (since $k\in\mathbb{Z}$). Hence, Definition
\ref{def.binom.negative} yields $\dbinom{n}{k}=0$. On the other hand,
$k-1<k<0$, so that $k-1$ is a negative integer (since $k-1\in\mathbb{Z}$).
Hence, Definition \ref{def.binom.negative} yields $\dbinom{n-1}{k-1}=0$.
Comparing $k\underbrace{\dbinom{n}{k}}_{=0}=0$ with $n\underbrace{\dbinom
{n-1}{k-1}}_{=0}=0$, we obtain $k\dbinom{n}{k}=n\dbinom{n-1}{k-1}$.
Proposition \ref{prop.binom.absorb} is thus proven in Case 1.

Let us now consider Case 2. In this case, we have $k=0$. Thus, $k-1=0-1=-1$,
so that $k-1$ is a negative integer (since $-1$ is a negative integer). Hence,
Definition \ref{def.binom.negative} yields $\dbinom{n-1}{k-1}=0$. Comparing
$\underbrace{k}_{=0}\dbinom{n}{k}=0$ with $n\underbrace{\dbinom{n-1}{k-1}%
}_{=0}=0$, we obtain $k\dbinom{n}{k}=n\dbinom{n-1}{k-1}$. Proposition
\ref{prop.binom.absorb} is thus proven in Case 2.

Let us finally consider Case 3. In this case, we have $k>0$. Hence, $k\geq1$
(since $k$ is an integer), so that $k-1\in\mathbb{N}$. Thus, the definition of
$\dbinom{n-1}{k-1}$ yields%
\begin{align*}
\dbinom{n-1}{k-1}  &  =\dfrac{\left(  n-1\right)  \left(  n-2\right)
\cdots\left(  \left(  n-1\right)  -\left(  k-1\right)  +1\right)  }{\left(
k-1\right)  !}\\
&  =\dfrac{\left(  n-1\right)  \left(  n-2\right)  \cdots\left(  n-k+1\right)
}{\left(  k-1\right)  !}%
\end{align*}
(since $\left(  n-1\right)  -\left(  k-1\right)  +1=n-k+1$). Also,
$k\in\mathbb{N}$ (since $k>0$ and $k\in\mathbb{Z}$). Hence, the definition of
$\dbinom{n}{k}$ yields%
\[
\dbinom{n}{k}=\dfrac{n\left(  n-1\right)  \cdots\left(  n-k+1\right)  }%
{k!}=\dfrac{n\left(  n-1\right)  \cdots\left(  n-k+1\right)  }{k\cdot\left(
k-1\right)  !}%
\]
(since $k!=k\cdot\left(  k-1\right)  !$ (because $k>0$)).

Now, comparing%
\begin{align*}
&  n\underbrace{\dbinom{n-1}{k-1}}_{=\dfrac{\left(  n-1\right)  \left(
n-2\right)  \cdots\left(  n-k+1\right)  }{\left(  k-1\right)  !}}\\
&  =n\dfrac{\left(  n-1\right)  \left(  n-2\right)  \cdots\left(
n-k+1\right)  }{\left(  k-1\right)  !}=\dfrac{1}{\left(  k-1\right)
!}\underbrace{n\cdot\left(  \left(  n-1\right)  \left(  n-2\right)
\cdots\left(  n-k+1\right)  \right)  }_{=n\left(  n-1\right)  \cdots\left(
n-k+1\right)  }\\
&  =\dfrac{1}{\left(  k-1\right)  !}n\left(  n-1\right)  \cdots\left(
n-k+1\right)  =\dfrac{n\left(  n-1\right)  \cdots\left(  n-k+1\right)
}{\left(  k-1\right)  !}%
\end{align*}
with%
\[
k\underbrace{\dbinom{n}{k}}_{=\dfrac{n\left(  n-1\right)  \cdots\left(
n-k+1\right)  }{k\cdot\left(  k-1\right)  !}}=k\dfrac{n\left(  n-1\right)
\cdots\left(  n-k+1\right)  }{k\cdot\left(  k-1\right)  !}=\dfrac{n\left(
n-1\right)  \cdots\left(  n-k+1\right)  }{\left(  k-1\right)  !},
\]
we obtain $k\dbinom{n}{k}=n\dbinom{n-1}{k-1}$. Proposition
\ref{prop.binom.absorb} is thus proven in Case 3.

Thus, Proposition \ref{prop.binom.absorb} has been proven in all three Cases
1, 2 and 3. Since these three Cases cover all possibilities, we thus conclude
that Proposition \ref{prop.binom.absorb} always holds. This completes its proof.
\end{proof}
\end{verlong}

Finally, we need the following result from elementary number theory:

\begin{theorem}
\label{thm.powersums.p}Let $p$ be a prime. Let $k\in\mathbb{N}$. Assume that
$k$ is not a positive multiple of $p-1$. Then,%
\[
\sum_{l=0}^{p-1}l^{k}\equiv0\operatorname{mod}p.
\]

\end{theorem}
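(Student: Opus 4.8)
The plan is to prove the classical result that the power sum $\sum_{l=0}^{p-1} l^k$ vanishes modulo $p$ whenever $k$ is not a positive multiple of $p-1$. I would separate this into the easy case $k=0$ and the main case $k \geq 1$. When $k=0$, the sum is $\sum_{l=0}^{p-1} l^0 = \sum_{l=0}^{p-1} 1 = p \equiv 0 \operatorname{mod} p$, using the standard convention $0^0=1$; alternatively, since $k=0$ is excluded from being a \emph{positive} multiple of $p-1$, this case is genuinely covered and must be handled. The substantive content is the case $k \geq 1$, which I treat via a primitive root argument.

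\medskip

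For the main case, let $g$ be a primitive root modulo $p$, i.e., a generator of the cyclic group $\left(\mathbb{Z}/p\mathbb{Z}\right)^{\times}$, whose existence is a standard fact of elementary number theory. Since $l=0$ contributes $0^k=0$ to the sum (as $k\geq 1$), I would rewrite
\[
\sum_{l=0}^{p-1} l^k \equiv \sum_{l=1}^{p-1} l^k \operatorname{mod} p.
\]
As $l$ ranges over $\left\{1,2,\ldots,p-1\right\}$, the residues $g^0, g^1, \ldots, g^{p-2}$ run (in some order) over all the nonzero residues modulo $p$. Hence
\[
\sum_{l=1}^{p-1} l^k \equiv \sum_{j=0}^{p-2} \left(g^j\right)^k = \sum_{j=0}^{p-2} \left(g^k\right)^j \operatorname{mod} p,
\]
which is a finite geometric series in the quantity $q := g^k$.

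\medskip

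The crux is then to argue that this geometric sum vanishes. Setting $q=g^k$, the sum is $\sum_{j=0}^{p-2} q^j$, and I would multiply through by $(q-1)$ to get the telescoping identity $(q-1)\sum_{j=0}^{p-2} q^j = q^{p-1}-1$. Now the key step, which is the main obstacle to get right, is showing that $q-1 = g^k - 1$ is \emph{not} divisible by $p$: this is exactly where the hypothesis that $k$ is not a positive multiple of $p-1$ is used. Since $g$ has multiplicative order exactly $p-1$ modulo $p$, we have $g^k \equiv 1 \operatorname{mod} p$ if and only if $(p-1) \mid k$; and because $k\geq 1$, this would force $k$ to be a positive multiple of $p-1$, contradicting the hypothesis. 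Therefore $q-1 \not\equiv 0 \operatorname{mod} p$, so $q-1$ is invertible modulo $p$. On the other hand, Fermat's little theorem gives $q^{p-1} = g^{k(p-1)} = \left(g^{p-1}\right)^k \equiv 1^k = 1 \operatorname{mod} p$, so $q^{p-1}-1 \equiv 0 \operatorname{mod} p$. Combining these, $(q-1)\sum_{j=0}^{p-2} q^j \equiv 0 \operatorname{mod} p$ with $q-1$ invertible yields $\sum_{j=0}^{p-2} q^j \equiv 0 \operatorname{mod} p$, which chains back to $\sum_{l=0}^{p-1} l^k \equiv 0 \operatorname{mod} p$ as desired. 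The only genuinely delicate point is the correct bookkeeping of the order-of-$g$ argument to ensure the hypothesis is invoked precisely where needed; everything else is routine telescoping and Fermat.
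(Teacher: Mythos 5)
Your proof is correct, but it does not match the paper's treatment, because the paper contains no proof of Theorem \ref{thm.powersums.p} at all: it is quoted as a known result, with references to \cite[Theorem 3.1]{lucas} and \cite[Theorem 1]{MacSon10}. So the comparison is with those cited arguments. Your route is the classical primitive-root one: you dispose of $k=0$ directly (the sum is $p\equiv 0\operatorname{mod}p$ under the convention $0^{0}=1$), and for $k\geq 1$ you reindex the nonzero residues as powers of a primitive root $g$, reducing the claim to the geometric series $\sum_{j=0}^{p-2}\left(g^{k}\right)^{j}$; multiplying by $g^{k}-1$, which is a unit modulo $p$ precisely because $g$ has order $p-1$ and $k$ is not a positive multiple of $p-1$, and invoking Fermat's little theorem for $\left(g^{k}\right)^{p-1}-1\equiv 0\operatorname{mod}p$, finishes the argument. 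You use the hypothesis exactly where it must be used, and the edge cases are sound (for $p=2$ the hypothesis forces $k=0$, so the primitive-root case never even arises). The cited proofs go a genuinely different way: MacMillan and Sondow, for instance, obtain the congruence from Pascal's identity by an induction, deliberately avoiding primitive roots, whose existence is the one nontrivial input your argument needs (it requires, e.g., the cyclicity of the multiplicative group of a finite field). What your approach buys is brevity and a transparent view of where the hypothesis on $k$ enters; what the paper's chosen references buy is alignment with its stated goal of keeping every ingredient elementary and self-contained at the level of Pascal-type identities.
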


Theorem \ref{thm.powersums.p} is proven, e.g., in \cite[Theorem 3.1]{lucas}
and (in a slightly rewritten form) in \cite[Theorem 1]{MacSon10}.

\subsection{Variants and consequences of Vandermonde convolution}

We are now going to state a number of identities that are restatements or
particular cases of the Vandermonde convolution identity (Proposition
\ref{prop.vandermonde.consequences}). We begin with the following one:

\begin{corollary}
\label{cor.vandermonde.2}Let $u\in\mathbb{Z}$ and $l\in\mathbb{N}$ and
$w\in\mathbb{N}$. Then,%
\[
\sum_{m=0}^{l}\dbinom{u}{w+m}\dbinom{l}{m}=\dbinom{u+l}{w+l}.
\]

\end{corollary}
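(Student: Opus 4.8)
The plan is to derive this directly from the Vandermonde convolution identity (Proposition \ref{prop.vandermonde.consequences}) by a suitable choice of parameters, followed by discarding vanishing terms, reindexing the sum, and applying the symmetry of binomial coefficients.

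First I would apply Proposition \ref{prop.vandermonde.consequences} with $x=u$, $y=l$ and $n=w+l$ (noting $w+l\in\mathbb{N}$), obtaining
\[
\dbinom{u+l}{w+l}=\sum_{k=0}^{w+l}\dbinom{u}{k}\dbinom{l}{w+l-k}.
\]
The right-hand side is already close to the claimed sum, but indexed over a larger range. The key observation is that most of its terms vanish: the factor $\dbinom{l}{w+l-k}$ is a binomial coefficient with top entry $l\in\mathbb{N}$, and for $k<w$ we have $w+l-k>l$, so Proposition \ref{prop.binom.0} forces $\dbinom{l}{w+l-k}=0$. This vanishing uses only that $l$ and $w+l-k$ are nonnegative integers with $l<w+l-k$; it does not require anything about $u$, which may well be negative.

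Having discarded the terms with $k<w$, I am left with $\sum_{k=w}^{w+l}\dbinom{u}{k}\dbinom{l}{w+l-k}$. I would then substitute $k=w+m$, so that $m$ ranges over $\left\{0,1,\ldots,l\right\}$, turning the sum into $\sum_{m=0}^{l}\dbinom{u}{w+m}\dbinom{l}{l-m}$. Finally, applying the symmetry relation $\dbinom{l}{l-m}=\dbinom{l}{m}$ (Proposition \ref{prop.binom.symm}, valid since $0\leq m\leq l$) yields exactly $\sum_{m=0}^{l}\dbinom{u}{w+m}\dbinom{l}{m}$, as desired.

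The argument involves no genuine obstacle; it is a matter of careful bookkeeping. The one place deserving attention is the claim that the terms with $k<w$ vanish: because $u$ is allowed to be negative, I must attribute the vanishing to the $\dbinom{l}{\cdot}$-factor (using $l\in\mathbb{N}$ together with Proposition \ref{prop.binom.0}) rather than to the $\dbinom{u}{\cdot}$-factor, whose entries I cannot control. Everything else reduces to a direct index substitution and an appeal to the symmetry proposition.
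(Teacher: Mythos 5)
Your proposal is correct and follows exactly the paper's own proof: the same application of Proposition \ref{prop.vandermonde.consequences} with $x=u$, $y=l$, $n=w+l$, the same vanishing of the terms with $k<w$ via Proposition \ref{prop.binom.0} applied to the $\dbinom{l}{w+l-k}$ factor, the same substitution $k=w+m$, and the same final appeal to Proposition \ref{prop.binom.symm}. Nothing is missing; your remark that the vanishing must come from the $\dbinom{l}{\cdot}$ factor (since $u$ may be negative) is exactly the right point of care.
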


\begin{vershort}
\begin{proof}
[Proof of Corollary \ref{cor.vandermonde.2}.]Proposition
\ref{prop.vandermonde.consequences} (applied to $x=u$, $y=l$ and $n=w+l$)
yields%
\begin{align*}
\dbinom{u+l}{w+l}  &  =\sum_{k=0}^{w+l}\dbinom{u}{k}\dbinom{l}{w+l-k}\\
&  =\sum_{k=0}^{w-1}\dbinom{u}{k}\underbrace{\dbinom{l}{w+l-k}}%
_{\substack{=0\\\text{(by Proposition \ref{prop.binom.0}}\\\text{(since
}l<w+l-k\text{ (because }k<w\text{)))}}}+\sum_{k=w}^{w+l}\dbinom{u}{k}%
\dbinom{l}{w+l-k}\\
&  \ \ \ \ \ \ \ \ \ \ \left(
\begin{array}
[c]{c}%
\text{here, we have split the sum at }k=w\text{,}\\
\text{since }0\leq w\leq w+l
\end{array}
\right) \\
&  =\underbrace{\sum_{k=0}^{w-1}\dbinom{u}{k}0}_{=0}+\sum_{k=w}^{w+l}%
\dbinom{u}{k}\dbinom{l}{w+l-k}=\sum_{k=w}^{w+l}\dbinom{u}{k}\dbinom{l}%
{w+l-k}\\
&  =\sum_{m=0}^{l}\dbinom{u}{w+m}\underbrace{\dbinom{l}{w+l-\left(
w+m\right)  }}_{\substack{=\dbinom{l}{l-m}=\dbinom{l}{m}\\\text{(by
Proposition \ref{prop.binom.symm})}}}\\
&  \ \ \ \ \ \ \ \ \ \ \left(  \text{here, we have substituted }w+m\text{ for
}k\text{ in the sum}\right) \\
&  =\sum_{m=0}^{l}\dbinom{u}{w+m}\dbinom{l}{m}.
\end{align*}
This proves Corollary \ref{cor.vandermonde.2}.
\end{proof}
\end{vershort}

\begin{verlong}
\begin{proof}
[Proof of Corollary \ref{cor.vandermonde.2}.]We have $l\in\mathbb{N}$, so that
$l\geq0$. We have $w\in\mathbb{N}$, so that $w\geq0$. Thus, $0\leq w\leq w+l$
(since $l\geq0$). Hence, $w+l\geq0$, so that $w+l\in\mathbb{N}$ (since
$w+l\in\mathbb{Z}$).

Each $k\in\left\{  0,1,\ldots,w-1\right\}  $ satisfies%
\begin{equation}
\dbinom{l}{w+l-k}=0 \label{pf.cor.vandermonde.2.1}%
\end{equation}
\footnote{\textit{Proof of (\ref{pf.cor.vandermonde.2.1}):} Let $k\in\left\{
0,1,\ldots,w-1\right\}  $. Thus, $k\leq w-1<w$, so that $w>k$. Hence,
$\underbrace{w}_{>k}+l-k>k+l-k=l$, so that $l<w+l-k$. Also, clearly,
$k\in\left\{  0,1,\ldots,w-1\right\}  \subseteq\mathbb{Z}$, so that
$w+l-k\in\mathbb{Z}$ and thus $w+l-k\in\mathbb{N}$ (since $w+l-k>l\geq0$).
Thus, Proposition \ref{prop.binom.0} (applied to $l$ and $w+l-k$ instead of
$m$ and $n$) yields $\dbinom{l}{w+l-k}=0$. This proves
(\ref{pf.cor.vandermonde.2.1}).}.

Each $m\in\left\{  0,1,\ldots,l\right\}  $ satisfies%
\begin{equation}
\dbinom{l}{w+l-\left(  w+m\right)  }=\dbinom{l}{m}
\label{pf.cor.vandermonde.2.2}%
\end{equation}
\footnote{\textit{Proof of (\ref{pf.cor.vandermonde.2.2}):} Let $m\in\left\{
0,1,\ldots,l\right\}  $. Thus, $m\leq l$, so that $l\geq m$. Also,
$m\in\left\{  0,1,\ldots,l\right\}  \subseteq\mathbb{N}$. Now, Proposition
\ref{prop.binom.symm} (applied to $l$ and $m$ instead of $m$ and $n$) yields
$\dbinom{l}{m}=\dbinom{l}{l-m}$. But $w+l-\left(  w+m\right)  =l-m$. Thus,
$\dbinom{l}{w+l-\left(  w+m\right)  }=\dbinom{l}{l-m}=\dbinom{l}{m}$. This
proves (\ref{pf.cor.vandermonde.2.2}).}.

Proposition \ref{prop.vandermonde.consequences} (applied to $x=u$, $y=l$ and
$n=w+l$) yields%
\begin{align*}
\dbinom{u+l}{w+l}  &  =\sum_{k=0}^{w+l}\dbinom{u}{k}\dbinom{l}{w+l-k}\\
&  =\sum_{k=0}^{w-1}\dbinom{u}{k}\underbrace{\dbinom{l}{w+l-k}}%
_{\substack{=0\\\text{(by (\ref{pf.cor.vandermonde.2.1}))}}}+\sum_{k=w}%
^{w+l}\dbinom{u}{k}\dbinom{l}{w+l-k}\\
&  \ \ \ \ \ \ \ \ \ \ \left(  \text{here, we have split the sum at }k=w\text{
(since }0\leq w\leq w+l\text{)}\right) \\
&  =\underbrace{\sum_{k=0}^{w-1}\dbinom{u}{k}0}_{=0}+\sum_{k=w}^{w+l}%
\dbinom{u}{k}\dbinom{l}{w+l-k}=\sum_{k=w}^{w+l}\dbinom{u}{k}\dbinom{l}%
{w+l-k}\\
&  =\sum_{m=0}^{l}\dbinom{u}{w+m}\underbrace{\dbinom{l}{w+l-\left(
w+m\right)  }}_{\substack{=\dbinom{l}{m}\\\text{(by
(\ref{pf.cor.vandermonde.2.2}))}}}\\
&  \ \ \ \ \ \ \ \ \ \ \left(  \text{here, we have substituted }w+m\text{ for
}k\text{ in the sum}\right) \\
&  =\sum_{m=0}^{l}\dbinom{u}{w+m}\dbinom{l}{m}.
\end{align*}
This proves Corollary \ref{cor.vandermonde.2}.
\end{proof}
\end{verlong}

Let us also state another corollary of Proposition
\ref{prop.vandermonde.consequences}:

\begin{corollary}
\label{cor.vandermonde.1}Let $x\in\mathbb{Z}$ and $y\in\mathbb{N}$ and
$n\in\mathbb{Z}$. Then,%
\[
\dbinom{x+y}{n}=\sum_{i=0}^{y}\dbinom{x}{n-i}\dbinom{y}{i}.
\]

\end{corollary}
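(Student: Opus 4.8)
The plan is to deduce this from the Vandermonde convolution identity (Proposition \ref{prop.vandermonde.consequences}). The mismatch to overcome is that Proposition \ref{prop.vandermonde.consequences} requires its lower index to be a nonnegative integer, whereas here $n$ ranges over all of $\mathbb{Z}$. Accordingly, I would first split into cases according to the sign of $n$.

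If $n<0$, then both sides vanish: the left-hand side $\dbinom{x+y}{n}$ is $0$ by Definition \ref{def.binom.negative}, and on the right-hand side every index $n-i$ (for $i\in\{0,1,\ldots,y\}$) is negative, so each $\dbinom{x}{n-i}$ is $0$ by the same definition. Hence the asserted identity reads $0=0$ in this case. If instead $n\geq0$, i.e.\ $n\in\mathbb{N}$, then I would apply Proposition \ref{prop.vandermonde.consequences} to obtain
\[
\dbinom{x+y}{n}=\sum_{k=0}^{n}\dbinom{x}{k}\dbinom{y}{n-k},
\]
and then reverse the order of summation by substituting $i=n-k$ (so that, as $i$ ranges over $\{0,1,\ldots,n\}$, the index $k=n-i$ also ranges over $\{0,1,\ldots,n\}$), rewriting this as
\[
\dbinom{x+y}{n}=\sum_{i=0}^{n}\dbinom{x}{n-i}\dbinom{y}{i}.
\]

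The remaining and only delicate step is to replace the upper summation limit $n$ by the desired limit $y$; I expect this to be the main obstacle, since $n$ and $y$ need not coincide and the naive comparison depends on which one is larger. The key observation is that all the ``extra'' terms vanish regardless of the comparison: any term with $i>n$ has $n-i<0$, so that $\dbinom{x}{n-i}=0$ by Definition \ref{def.binom.negative}; and any term with $i>y$ has $\dbinom{y}{i}=0$ by Proposition \ref{prop.binom.0} (since $y,i\in\mathbb{N}$ with $y<i$). Consequently both $\sum_{i=0}^{n}$ and $\sum_{i=0}^{y}$ agree with $\sum_{i=0}^{\max\{n,y\}}$, the padding terms contributing nothing in either direction. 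Substituting this equality of sums into the displayed identity then yields
\[
\dbinom{x+y}{n}=\sum_{i=0}^{y}\dbinom{x}{n-i}\dbinom{y}{i},
\]
which is the claim of Corollary \ref{cor.vandermonde.1}.
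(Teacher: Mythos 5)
Your proof is correct. Note that the paper itself gives no proof of Corollary \ref{cor.vandermonde.1}; it simply cites an external reference (\cite[Corollary 2.2]{lucas}), so there is no in-paper argument to compare against. Your derivation is a clean, self-contained way to fill that gap using only ingredients already present in the paper: the case $n<0$ is disposed of via Definition \ref{def.binom.negative} on both sides; the case $n\in\mathbb{N}$ follows from Proposition \ref{prop.vandermonde.consequences} after the index reversal $i=n-k$; and the replacement of the upper limit $n$ by $y$ is justified by padding both sums up to $\max\left\{ n,y\right\} $, where the extra addends vanish --- those with $i>n$ because $\dbinom{x}{n-i}=0$ (negative lower index), and those with $i>y$ because $\dbinom{y}{i}=0$ (Proposition \ref{prop.binom.0}, applicable since $y\in\mathbb{N}$ and $i\in\mathbb{N}$). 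This two-sided vanishing argument is exactly the right observation, since neither of $n$ and $y$ need be the larger one. The only stylistic remark is that the two cases could be merged: the padding argument in fact shows that for \emph{any} $n\in\mathbb{Z}$ both sides equal the common "doubly padded" sum, but your case split costs nothing and keeps the appeal to Proposition \ref{prop.vandermonde.consequences} within its stated hypotheses.
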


See \cite[Corollary 2.2]{lucas} for a proof of Corollary
\ref{cor.vandermonde.1}.

\begin{lemma}
\label{lem.A1a}Let $u\in\mathbb{Z}$ and $w\in\mathbb{N}$ and $l\in\mathbb{N}$.
Then,%
\[
\dbinom{u+2l}{w+l}=\dbinom{u}{w}\dbinom{2l}{l}+\sum_{i=1}^{l}\left(
\dbinom{u}{w+i}+\dbinom{u}{w-i}\right)  \dbinom{2l}{l-i}.
\]

\end{lemma}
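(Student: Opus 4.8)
The plan is to realize the claimed identity as a reindexed and symmetrized form of Corollary \ref{cor.vandermonde.1}, which is the natural tool here: its right-hand side automatically produces factors of the shape $\dbinom{2l}{\cdot}$ as soon as we take $y = 2l$. So first I would apply Corollary \ref{cor.vandermonde.1} with $x = u$, $y = 2l$ and $n = w+l$ (the hypotheses hold, since $u \in \mathbb{Z}$, $2l \in \mathbb{N}$ and $w+l \in \mathbb{Z}$), obtaining
\[
\dbinom{u+2l}{w+l}=\sum_{i=0}^{2l}\dbinom{u}{w+l-i}\dbinom{2l}{i}.
\]
The left-hand side is already exactly the one we want, so the whole remaining task is to massage this sum into the asserted shape; no new identity is needed.

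The key step is a change of summation index that recenters the sum about $i = l$. Substituting $i = l+k$ (so that $k$ ranges over $\left\{-l,-l+1,\ldots,l\right\}$) and simplifying $w+l-i = w-k$ turns the above into
\[
\dbinom{u+2l}{w+l}=\sum_{k=-l}^{l}\dbinom{u}{w-k}\dbinom{2l}{l+k}.
\]
This is now a sum that is symmetric in $k \mapsto -k$ up to the symmetry of the $\dbinom{2l}{\cdot}$ factor, which is precisely what the statement is exploiting.

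Finally I would peel off the central term $k = 0$, which contributes $\dbinom{u}{w}\dbinom{2l}{l}$, and pair each remaining index $k \in \left\{1,\ldots,l\right\}$ with $-k$. The summand at index $k$ is $\dbinom{u}{w-k}\dbinom{2l}{l+k}$, while the summand at index $-k$ is $\dbinom{u}{w+k}\dbinom{2l}{l-k}$; invoking Proposition \ref{prop.binom.symm} (applied to $m = 2l$ and $n = l+k$, whose hypotheses $l+k \in \mathbb{N}$ and $2l \geq l+k$ follow from $1 \leq k \leq l$) to rewrite $\dbinom{2l}{l+k} = \dbinom{2l}{l-k}$, both summands acquire the common factor $\dbinom{2l}{l-k}$, so the pair combines into $\left(\dbinom{u}{w+k}+\dbinom{u}{w-k}\right)\dbinom{2l}{l-k}$. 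Renaming $k$ as $i$ then yields the claim. I do not expect a genuine obstacle: the argument is entirely formal. The only point demanding a little care is the index bookkeeping — checking that the substitution $i = l+k$ and the subsequent pairing exhaust each index exactly once, and confirming that summands with a negative lower entry (for instance $\dbinom{u}{w-k}$ when $k > w$) vanish by Definition \ref{def.binom.negative} consistently on both sides, so that no boundary terms are silently dropped.
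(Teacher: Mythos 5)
Your proposal is correct and follows essentially the same route as the paper's proof: both apply Corollary \ref{cor.vandermonde.1} with $x=u$, $y=2l$, $n=w+l$, recenter the summation index around the middle term (your substitution $i=l+k$ is the mirror image of the paper's substitution of $l-i$ for $i$), split off the central addend $\dbinom{u}{w}\dbinom{2l}{l}$, and merge the remaining terms in symmetric pairs using Proposition \ref{prop.binom.symm}. Your closing remark about terms with negative lower index is also handled correctly, since Definition \ref{def.binom.negative} makes those binomial coefficients vanish on both sides, so no separate boundary analysis is needed.
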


\begin{vershort}
\begin{proof}
[Proof of Lemma \ref{lem.A1a}.]Corollary \ref{cor.vandermonde.1} (applied to
$x=u$, $y=2l$ and $n=w+l$) yields%
\begin{align*}
\dbinom{u+2l}{w+l}  &  =\sum\limits_{i=0}^{2l}\dbinom{u}{w+l-i}\dbinom{2l}%
{i}=\sum\limits_{i=-l}^{l}\dbinom{u}{w+i}\dbinom{2l}{l-i}\\
&  \ \ \ \ \ \ \ \ \ \ \left(  \text{here, we have substituted }l-i\text{ for
}i\text{ in the sum}\right) \\
&  =\sum\limits_{\substack{i\in\left\{  -l,-l+1,\ldots,l\right\}  ;\\i\neq
0}}\dbinom{u}{w+i}\dbinom{2l}{l-i}+\dbinom{u}{w}\dbinom{2l}{l}%
\end{align*}
(here, we have split off the addend for $i=0$ from the sum). Hence,%
\begin{align*}
\dbinom{u+2l}{w+l}-\dbinom{u}{w}\dbinom{2l}{l}  &  =\sum
\limits_{\substack{i\in\left\{  -l,-l+1,\ldots,l\right\}  ;\\i\neq0}%
}\dbinom{u}{w+i}\dbinom{2l}{l-i}\\
&  =\sum\limits_{i=1}^{l}\dbinom{u}{w+i}\dbinom{2l}{l-i}+\sum\limits_{i=-l}%
^{-1}\dbinom{u}{w+i}\dbinom{2l}{l-i}\\
&  \ \ \ \ \ \ \ \ \ \ \left(
\begin{array}
[c]{c}%
\text{here, we have split the sum into two:}\\
\text{one for \textquotedblleft positive }i\text{\textquotedblright\ and one
for \textquotedblleft negative }i\text{\textquotedblright}%
\end{array}
\right) \\
&  =\sum\limits_{i=1}^{l}\dbinom{u}{w+i}\dbinom{2l}{l-i}+\sum\limits_{i=1}%
^{l}\dbinom{u}{w-i}\underbrace{\dbinom{2l}{l+i}}_{\substack{=\dbinom{2l}%
{l-i}\\\text{(by Proposition \ref{prop.binom.symm})}}}\\
&  \ \ \ \ \ \ \ \ \ \ \left(
\begin{array}
[c]{c}%
\text{here, we have substituted }-i\text{ for }i\\
\text{in the second sum}%
\end{array}
\right) \\
&  =\sum\limits_{i=1}^{l}\dbinom{u}{w+i}\dbinom{2l}{l-i}+\sum\limits_{i=1}%
^{l}\dbinom{u}{w-i}\dbinom{2l}{l-i}\\
&  =\sum_{i=1}^{l}\left(  \dbinom{u}{w+i}+\dbinom{u}{w-i}\right)  \dbinom
{2l}{l-i}.
\end{align*}
In other words,%
\[
\dbinom{u+2l}{w+l}=\dbinom{u}{w}\dbinom{2l}{l}+\sum_{i=1}^{l}\left(
\dbinom{u}{w+i}+\dbinom{u}{w-i}\right)  \dbinom{2l}{l-i}.
\]
This proves Lemma \ref{lem.A1a}.
\end{proof}
\end{vershort}

\begin{verlong}
\begin{proof}
[Proof of Lemma \ref{lem.A1a}.]We have $l\in\mathbb{N}$, so that $l\geq0$.
Hence, $-l\leq0$, so that $-l\leq0\leq l$ (since $l\geq0$).

Every $i\in\left\{  1,2,\ldots,l\right\}  $ satisfies%
\begin{equation}
\dbinom{2l}{l+i}=\dbinom{2l}{l-i} \label{pf.lem.A1a.symm}%
\end{equation}
\footnote{\textit{Proof of (\ref{pf.lem.A1a.symm}):} Let $i\in\left\{
1,2,\ldots,l\right\}  $. Thus, $1\leq i\leq l$. Hence, $l+\underbrace{i}_{\leq
l}\leq l+l=2l$, so that $2l\geq l+i$. Also, $i\in\left\{  1,2,\ldots
,l\right\}  \subseteq\mathbb{N}$, so that $l+i\in\mathbb{N}$ (since
$l\in\mathbb{N}$). Hence, Proposition \ref{prop.binom.symm} (applied to $2l$
and $l+i$ instead of $m$ and $n$) yields $\dbinom{2l}{l+i}=\dbinom
{2l}{2l-\left(  l+i\right)  }=\dbinom{2l}{l-i}$ (since $2l-\left(  l+i\right)
=l-i$). This proves (\ref{pf.lem.A1a.symm}).}.

Corollary \ref{cor.vandermonde.1} (applied to $x=u$, $y=2l$ and $n=w+l$)
yields%
\begin{align}
\dbinom{u+2l}{w+l}  &  =\sum\limits_{i=0}^{2l}\dbinom{u}{w+l-i}\dbinom{2l}%
{i}=\sum\limits_{i=-l}^{l}\underbrace{\dbinom{u}{w+l-\left(  l-i\right)  }%
}_{\substack{=\dbinom{u}{w+i}\\\text{(since }w+l-\left(  l-i\right)
=w+i\text{)}}}\dbinom{2l}{l-i}\nonumber\\
&  \ \ \ \ \ \ \ \ \ \ \left(  \text{here, we have substituted }l-i\text{ for
}i\text{ in the sum}\right) \nonumber\\
&  =\sum\limits_{i=-l}^{l}\dbinom{u}{w+i}\dbinom{2l}{l-i}\nonumber\\
&  =\sum\limits_{i=-l}^{0-1}\dbinom{u}{w+i}\dbinom{2l}{l-i}+\sum
\limits_{i=0}^{l}\dbinom{u}{w+i}\dbinom{2l}{l-i} \label{pf.lem.A1a.1}%
\end{align}
(here, we have split the sum at $i=0$ (since $-l\leq0\leq l$)).

But $0-1=-1$. Hence,%
\begin{align}
&  \sum\limits_{i=-l}^{0-1}\dbinom{u}{w+i}\dbinom{2l}{l-i}\nonumber\\
&  =\sum\limits_{i=-l}^{-1}\dbinom{u}{w+i}\dbinom{2l}{l-i}=\sum\limits_{i=1}%
^{l}\underbrace{\dbinom{u}{w+\left(  -i\right)  }}_{\substack{=\dbinom{u}%
{w-i}\\\text{(since }w+\left(  -i\right)  =w-i\text{)}}}\underbrace{\dbinom
{2l}{l-\left(  -i\right)  }}_{\substack{=\dbinom{2l}{l+i}\\\text{(since
}l-\left(  -i\right)  =l+i\text{)}}}\nonumber\\
&  =\sum\limits_{i=1}^{l}\dbinom{u}{w-i}\underbrace{\dbinom{2l}{l+i}%
}_{\substack{=\dbinom{2l}{l-i}\\\text{(by (\ref{pf.lem.A1a.symm}))}}%
}=\sum\limits_{i=1}^{l}\dbinom{u}{w-i}\dbinom{2l}{l-i}. \label{pf.lem.A1a.2}%
\end{align}

On the other hand, $l\in\left\{  0,1,\ldots,l\right\}  $ (since $l\in
\mathbb{N}$). Hence, we can split off the addend for $i=0$ from the sum
$\sum\limits_{i=0}^{l}\dbinom{u}{w+i}\dbinom{2l}{l-i}$. We thus obtain%
\begin{align}
&  \sum\limits_{i=0}^{l}\dbinom{u}{w+i}\dbinom{2l}{l-i}\nonumber\\
&  =\sum\limits_{i=1}^{l}\dbinom{u}{w+i}\dbinom{2l}{l-i}+\underbrace{\dbinom
{u}{w+0}}_{=\dbinom{u}{w}}\underbrace{\dbinom{2l}{l-0}}_{=\dbinom{2l}{l}%
}\nonumber\\
&  =\sum\limits_{i=1}^{l}\dbinom{u}{w+i}\dbinom{2l}{l-i}+\dbinom{u}{w}%
\dbinom{2l}{l}. \label{pf.lem.A1a.3}%
\end{align}

Now, (\ref{pf.lem.A1a.1}) becomes%
\begin{align*}
\dbinom{u+2l}{w+l}  &  =\underbrace{\sum\limits_{i=-l}^{0-1}\dbinom{u}%
{w+i}\dbinom{2l}{l-i}}_{\substack{=\sum\limits_{i=1}^{l}\dbinom{u}{w-i}%
\dbinom{2l}{l-i}\\\text{(by (\ref{pf.lem.A1a.2}))}}}+\underbrace{\sum
\limits_{i=0}^{l}\dbinom{u}{w+i}\dbinom{2l}{l-i}}_{\substack{=\sum
\limits_{i=1}^{l}\dbinom{u}{w+i}\dbinom{2l}{l-i}+\dbinom{u}{w}\dbinom{2l}%
{l}\\\text{(by (\ref{pf.lem.A1a.3}))}}}\\
&  =\underbrace{\sum\limits_{i=1}^{l}\dbinom{u}{w-i}\dbinom{2l}{l-i}%
+\sum\limits_{i=1}^{l}\dbinom{u}{w+i}\dbinom{2l}{l-i}}_{\substack{=\sum
\limits_{i=1}^{l}\dbinom{u}{w+i}\dbinom{2l}{l-i}+\sum\limits_{i=1}^{l}%
\dbinom{u}{w-i}\dbinom{2l}{l-i}\\=\sum_{i=1}^{l}\left(  \dbinom{u}%
{w+i}+\dbinom{u}{w-i}\right)  \dbinom{2l}{l-i}}}+\dbinom{u}{w}\dbinom{2l}{l}\\
&  =\sum_{i=1}^{l}\left(  \dbinom{u}{w+i}+\dbinom{u}{w-i}\right)  \dbinom
{2l}{l-i}+\dbinom{u}{w}\dbinom{2l}{l}\\
&  =\dbinom{u}{w}\dbinom{2l}{l}+\sum_{i=1}^{l}\left(  \dbinom{u}{w+i}%
+\dbinom{u}{w-i}\right)  \dbinom{2l}{l-i}.
\end{align*}
This proves Lemma \ref{lem.A1a}.
\end{proof}
\end{verlong}

\begin{lemma}
\label{lem.vandermonde.3}Let $p\in\mathbb{N}$. Let $c\in\mathbb{Z}$. Let
$l\in\left\{  0,1,\ldots,p-1\right\}  $. Then,%
\[
\dbinom{cp+2l}{l}=\sum\limits_{k=0}^{p-1}\dbinom{cp+l}{k}\dbinom{l}{k}.
\]

\end{lemma}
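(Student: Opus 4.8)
The plan is to read the right-hand side as a single instance of the Vandermonde-type identity in Corollary \ref{cor.vandermonde.2}, once the range of summation has been adjusted. Indeed, Corollary \ref{cor.vandermonde.2} (applied to $u=cp+l$, $w=0$ and the given $l$) immediately yields
\[
\dbinom{\left(  cp+l\right)  +l}{0+l}=\sum_{k=0}^{l}\dbinom{cp+l}{0+k}\dbinom{l}{k},
\]
that is, $\dbinom{cp+2l}{l}=\sum_{k=0}^{l}\dbinom{cp+l}{k}\dbinom{l}{k}$. (Here I am using that $cp+l\in\mathbb{Z}$ and $l,w\in\mathbb{N}$, as the corollary requires; $p\in\mathbb{N}$ guarantees $cp+l\in\mathbb{Z}$.) So the only gap between this and the claimed identity lies in the upper summation limit: the corollary produces a sum up to $k=l$, whereas the lemma asks for a sum up to $k=p-1$.

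First I would close that gap. Since $l\in\left\{  0,1,\ldots,p-1\right\}  $, we have $l\leq p-1$, so the extra terms are exactly those indexed by $k\in\left\{  l+1,l+2,\ldots,p-1\right\}  $. For each such $k$, both $l$ and $k$ are nonnegative integers with $l<k$, so Proposition \ref{prop.binom.0} gives $\dbinom{l}{k}=0$; hence every extra term vanishes and
\[
\sum_{k=0}^{p-1}\dbinom{cp+l}{k}\dbinom{l}{k}=\sum_{k=0}^{l}\dbinom{cp+l}{k}\dbinom{l}{k}.
\]
Combining the two displays gives the asserted equality.

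There is essentially no hard part here: the statement is a direct specialization of Corollary \ref{cor.vandermonde.2}, and the hypothesis $l\leq p-1$ is used precisely to ensure that the tail of the longer sum consists of vanishing binomial coefficients. The only point demanding a little care is the bookkeeping of the summation bounds — confirming that $\left\{  l+1,\ldots,p-1\right\}  $ is exactly the set of indices to be discarded, and noting that it is empty (so that the two sums already coincide) in the boundary case $l=p-1$. Alternatively, if one prefers not to invoke Corollary \ref{cor.vandermonde.2}, the same result drops out of Proposition \ref{prop.vandermonde.consequences} applied with $x=cp+l$, $y=l$ and $n=l$, together with the symmetry $\dbinom{l}{l-k}=\dbinom{l}{k}$ furnished by Proposition \ref{prop.binom.symm}.
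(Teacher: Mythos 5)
Your proof is correct and takes essentially the same route as the paper's: both reduce the claim to an already-established Vandermonde variant specialized at the same point, and then extend the sum up to $k=p-1$ by observing that the extra terms vanish by Proposition \ref{prop.binom.0} since $l<k$. The only cosmetic difference is that you cite Corollary \ref{cor.vandermonde.2} (with $u=cp+l$, $w=0$), which has the symmetry step already built in, whereas the paper applies Corollary \ref{cor.vandermonde.1} and then performs the substitution and the appeal to Proposition \ref{prop.binom.symm} explicitly --- indeed, the alternative you sketch in your final sentence is precisely the paper's argument.
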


\begin{vershort}
\begin{proof}
[Proof of Lemma \ref{lem.vandermonde.3}.]Corollary \ref{cor.vandermonde.1}
(applied to $x=cp+l$, $y=l$ and $n=l$) yields%
\begin{align*}
\dbinom{cp+l+l}{l}  &  =\sum_{i=0}^{l}\dbinom{cp+l}{l-i}\dbinom{l}{i}%
=\sum\limits_{k=0}^{l}\dbinom{cp+l}{k}\underbrace{\dbinom{l}{l-k}%
}_{\substack{=\dbinom{l}{k}\\\text{(by Proposition \ref{prop.binom.symm})}}}\\
&  \ \ \ \ \ \ \ \ \ \ \left(  \text{here, we have substituted }k\text{ for
}l-i\text{ in the sum}\right) \\
&  =\sum\limits_{k=0}^{l}\dbinom{cp+l}{k}\dbinom{l}{k}.
\end{align*}

Comparing this with%
\begin{align*}
\sum\limits_{k=0}^{p-1}\dbinom{cp+l}{k}\dbinom{l}{k}  &  =\sum\limits_{k=0}%
^{l}\dbinom{cp+l}{k}\dbinom{l}{k}+\sum\limits_{k=l+1}^{p-1}\dbinom{cp+l}%
{k}\underbrace{\dbinom{l}{k}}_{\substack{=0\\\text{(by Proposition
\ref{prop.binom.0}}\\\text{(applied to }m=l\text{ and }n=k\text{)}%
\\\text{(since }l<k\text{))}}}\\
&  \ \ \ \ \ \ \ \ \ \ \left(  \text{here, we have split the sum at
}k=l\text{, since }0\leq l\leq p-1\right) \\
&  =\sum\limits_{k=0}^{l}\dbinom{cp+l}{k}\dbinom{l}{k}+\underbrace{\sum
\limits_{k=l+1}^{p-1}\dbinom{cp+l}{k}0}_{=0}=\sum\limits_{k=0}^{l}%
\dbinom{cp+l}{k}\dbinom{l}{k},
\end{align*}
we obtain $\sum\limits_{k=0}^{p-1}\dbinom{cp+l}{k}\dbinom{l}{k}=\dbinom
{cp+l+l}{l}=\dbinom{cp+2l}{l}$. This proves Lemma \ref{lem.vandermonde.3}.
\end{proof}
\end{vershort}

\begin{verlong}
\begin{proof}
[Proof of Lemma \ref{lem.vandermonde.3}.]We have $l\in\left\{  0,1,\ldots
,p-1\right\}  \subseteq\mathbb{N}$.

Each $k\in\left\{  0,1,\ldots,l\right\}  $ satisfies%
\begin{equation}
\dbinom{l}{l-k}=\dbinom{l}{k} \label{pf.lem.vandermonde.3.1}%
\end{equation}
\footnote{\textit{Proof of (\ref{pf.lem.vandermonde.3.1}):} Let $k\in\left\{
0,1,\ldots,l\right\}  $. Thus, $0\leq k\leq l$. Hence, $l\geq k$. Also,
$k\in\left\{  0,1,\ldots,l\right\}  \subseteq\mathbb{N}$. Hence, Proposition
\ref{prop.binom.symm} (applied to $l$ and $k$ instead of $m$ and $n$) yields
$\dbinom{l}{k}=\dbinom{l}{l-k}$. This proves (\ref{pf.lem.vandermonde.3.1}).}.

Each $k\in\left\{  l+1,l+2,\ldots,p-1\right\}  $ satisfies%
\begin{equation}
\dbinom{l}{k}=0 \label{pf.lem.vandermonde.3.2}%
\end{equation}
\footnote{\textit{Proof of (\ref{pf.lem.vandermonde.3.2}):} Let $k\in\left\{
l+1,l+2,\ldots,p-1\right\}  $. Thus, $k\geq l+1>l\geq0$ (since $l\in\left\{
0,1,\ldots,p-1\right\}  $). Hence, $k\in\mathbb{N}$ (since $k\in\left\{
l+1,l+2,\ldots,p-1\right\}  \subseteq\mathbb{Z}$).
\par
Also, $l<k$ (since $k>l$) and $l\in\mathbb{N}$. Thus, Proposition
\ref{prop.binom.0} (applied to $l$ and $k$ instead of $m$ and $n$) yields
$\dbinom{l}{k}=0$. This proves (\ref{pf.lem.vandermonde.3.2}).}.

We have $l\in\left\{  0,1,\ldots,p-1\right\}  $, so that $0\leq l\leq p-1$.

We have $l\in\mathbb{N}$. Hence, Corollary \ref{cor.vandermonde.1} (applied to
$x=cp+l$, $y=l$ and $n=l$) yields%
\begin{align*}
\dbinom{cp+l+l}{l}  &  =\sum_{i=0}^{l}\dbinom{cp+l}{l-i}\dbinom{l}{i}%
=\sum\limits_{k=0}^{l}\underbrace{\dbinom{cp+l}{l-\left(  l-k\right)  }%
}_{\substack{=\dbinom{cp+l}{k}\\\text{(since }l-\left(  l-k\right)
=k\text{)}}}\underbrace{\dbinom{l}{l-k}}_{\substack{=\dbinom{l}{k}\\\text{(by
(\ref{pf.lem.vandermonde.3.1}))}}}\\
&  \ \ \ \ \ \ \ \ \ \ \left(  \text{here, we have substituted }k\text{ for
}l-i\text{ in the sum}\right) \\
&  =\sum\limits_{k=0}^{l}\dbinom{cp+l}{k}\dbinom{l}{k}.
\end{align*}

Comparing this with%
\begin{align*}
\sum\limits_{k=0}^{p-1}\dbinom{cp+l}{k}\dbinom{l}{k}  &  =\sum\limits_{k=0}%
^{l}\dbinom{cp+l}{k}\dbinom{l}{k}+\sum\limits_{k=l+1}^{p-1}\dbinom{cp+l}%
{k}\underbrace{\dbinom{l}{k}}_{\substack{=0\\\text{(by
(\ref{pf.lem.vandermonde.3.2}))}}}\\
&  \ \ \ \ \ \ \ \ \ \ \left(  \text{here, we have split the sum at }k=l\text{
(since }0\leq l\leq p-1\text{)}\right) \\
&  =\sum\limits_{k=0}^{l}\dbinom{cp+l}{k}\dbinom{l}{k}+\underbrace{\sum
\limits_{k=l+1}^{p-1}\dbinom{cp+l}{k}0}_{=0}=\sum\limits_{k=0}^{l}%
\dbinom{cp+l}{k}\dbinom{l}{k},
\end{align*}
we obtain $\sum\limits_{k=0}^{p-1}\dbinom{cp+l}{k}\dbinom{l}{k}=\dbinom
{cp+l+l}{l}=\dbinom{cp+2l}{l}$ (since $l+l=2l$). This proves Lemma
\ref{lem.vandermonde.3}.
\end{proof}
\end{verlong}

\begin{lemma}
\label{lem.vandermonde.4}Let $p\in\mathbb{N}$. Let $l\in\mathbb{N}$. Then,%
\[
\sum_{i=1}^{l}\dbinom{p}{i}\dbinom{2l}{l-i}=\dbinom{p+2l}{l}-\dbinom{2l}{l}.
\]

\end{lemma}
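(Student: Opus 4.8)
The plan is to recognize this as essentially a direct instance of the Vandermonde convolution. I would apply Proposition \ref{prop.vandermonde.consequences} with $x=p$, $y=2l$, and $n=l$ (note $p\in\mathbb{N}$ and $2l\in\mathbb{Z}$ are allowed, and $l\in\mathbb{N}$ is the required nonnegative index), which immediately gives
\[
\dbinom{p+2l}{l}=\sum_{k=0}^{l}\dbinom{p}{k}\dbinom{2l}{l-k}.
\]
The crucial feature here is that the summation range produced by the identity is exactly $k\in\{0,1,\ldots,l\}$, so no auxiliary vanishing-of-terms arguments (of the kind needed in Corollary \ref{cor.vandermonde.2} or Lemma \ref{lem.vandermonde.3}) are required.

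Next I would split off the $k=0$ addend. By Proposition \ref{prop.binom.00} we have $\dbinom{p}{0}=1$, so that addend is simply $\dbinom{2l}{l-0}=\dbinom{2l}{l}$. This yields
\[
\dbinom{p+2l}{l}=\dbinom{2l}{l}+\sum_{k=1}^{l}\dbinom{p}{k}\dbinom{2l}{l-k}.
\]
Subtracting $\dbinom{2l}{l}$ from both sides and renaming the summation index $k$ as $i$ gives precisely
\[
\sum_{i=1}^{l}\dbinom{p}{i}\dbinom{2l}{l-i}=\dbinom{p+2l}{l}-\dbinom{2l}{l},
\]
which is the assertion of Lemma \ref{lem.vandermonde.4}.

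I do not anticipate any genuine obstacle in this argument; the entire content is the single application of the Vandermonde convolution followed by isolating one term. The only point that warrants a moment's attention is verifying that the hypotheses of Proposition \ref{prop.vandermonde.consequences} are met (in particular that the upper summation bound $n=l$ is a nonnegative integer, which holds since $l\in\mathbb{N}$), and that the split at $k=0$ is legitimate because $0\in\{0,1,\ldots,l\}$. Both are immediate, so the proof is short and purely computational.
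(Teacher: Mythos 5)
Your proposal is correct and is essentially identical to the paper's own proof: both apply Proposition \ref{prop.vandermonde.consequences} with $x=p$, $y=2l$, $n=l$, split off the $k=0$ (resp. $i=0$) addend using $\dbinom{p}{0}=1$, and solve for the remaining sum. No gaps; the argument is complete as written.
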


\begin{vershort}
\begin{proof}
[Proof of Lemma \ref{lem.vandermonde.4}.]Proposition
\ref{prop.vandermonde.consequences} (applied to $x=p$, $y=2l$ and $n=l$)
yields%
\begin{align*}
\dbinom{p+2l}{l}  &  =\sum_{k=0}^{l}\dbinom{p}{k}\dbinom{2l}{l-k}=\sum
_{i=0}^{l}\dbinom{p}{i}\dbinom{2l}{l-i}\\
&  \ \ \ \ \ \ \ \ \ \ \left(  \text{here, we have renamed the summation index
}k\text{ as }i\right) \\
&  =\underbrace{\dbinom{p}{0}}_{=1}\underbrace{\dbinom{2l}{l-0}}_{=\dbinom
{2l}{l}}+\sum_{i=1}^{l}\dbinom{p}{i}\dbinom{2l}{l-i}=\dbinom{2l}{l}+\sum
_{i=1}^{l}\dbinom{p}{i}\dbinom{2l}{l-i}.
\end{align*}
Thus,%
\[
\sum_{i=1}^{l}\dbinom{p}{i}\dbinom{2l}{l-i}=\dbinom{p+2l}{l}-\dbinom{2l}{l}.
\]
This proves Lemma \ref{lem.vandermonde.4}.
\end{proof}
\end{vershort}

\begin{verlong}
\begin{proof}
[Proof of Lemma \ref{lem.vandermonde.4}.]We have $0\in\left\{  0,1,\ldots
,l\right\}  $ (since $l\in\mathbb{N}$).

Proposition \ref{prop.vandermonde.consequences} (applied to $x=p$, $y=2l$ and
$n=l$) yields%
\begin{align*}
\dbinom{p+2l}{l}  &  =\sum_{k=0}^{l}\dbinom{p}{k}\dbinom{2l}{l-k}=\sum
_{i=0}^{l}\dbinom{p}{i}\dbinom{2l}{l-i}\\
&  \ \ \ \ \ \ \ \ \ \ \left(  \text{here, we have renamed the summation index
}k\text{ as }i\right) \\
&  =\underbrace{\dbinom{p}{0}}_{\substack{=1\\\text{(by Proposition
\ref{prop.binom.00}}\\\text{(applied to }m=p\text{))}}}\underbrace{\dbinom
{2l}{l-0}}_{=\dbinom{2l}{l}}+\sum_{i=1}^{l}\dbinom{p}{i}\dbinom{2l}{l-i}\\
&  \ \ \ \ \ \ \ \ \ \ \left(
\begin{array}
[c]{c}%
\text{here, we have split off the addend for }i=0\text{ from the sum}\\
\text{(since }0\in\left\{  0,1,\ldots,l\right\}  \text{)}%
\end{array}
\right) \\
&  =\dbinom{2l}{l}+\sum_{i=1}^{l}\dbinom{p}{i}\dbinom{2l}{l-i}.
\end{align*}
Solving this equation for $\sum_{i=1}^{l}\dbinom{p}{i}\dbinom{2l}{l-i}$, we
find
\[
\sum_{i=1}^{l}\dbinom{p}{i}\dbinom{2l}{l-i}=\dbinom{p+2l}{l}-\dbinom{2l}{l}.
\]
This proves Lemma \ref{lem.vandermonde.4}.
\end{proof}
\end{verlong}

\subsection{A congruence of Bailey's}

Next, we shall prove a modulo-$p^{2}$ congruence for certain binomial
coefficients that can be regarded as a counterpart to Theorem \ref{thm.p2cong}:

\begin{theorem}
\label{thm.bailey}Let $p$ be a prime. Let $N\in\mathbb{Z}$ and $K\in
\mathbb{Z}$ and $i\in\left\{  1,2,\ldots,p-1\right\}  $. Then:

\textbf{(a)} We have%
\[
\dbinom{Np}{Kp+i}\equiv N\dbinom{N-1}{K}\dbinom{p}{i}\operatorname{mod}p^{2}.
\]

\textbf{(b)} We have%
\[
\dbinom{Np}{Kp-i}\equiv N\dbinom{N-1}{K-1}\dbinom{p}{i}\operatorname{mod}%
p^{2}.
\]

\textbf{(c)} We have%
\[
\dbinom{Np}{Kp+i}+\dbinom{Np}{Kp-i}\equiv N\dbinom{N}{K}\dbinom{p}%
{i}\operatorname{mod}p^{2}.
\]

\end{theorem}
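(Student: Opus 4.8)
The plan is to prove parts \textbf{(a)} and \textbf{(b)} by a single generating-function computation, and then to deduce part \textbf{(c)} from them. The starting point is Proposition \ref{prop.binom.newton}, which lets me read off $\dbinom{Np}{Kp+i}$ and $\dbinom{Np}{Kp-i}$ as coefficients of the power series $(1+X)^{Np}=\left((1+X)^{p}\right)^{N}\in\mathbb{Z}[[X]]$. Since I only care about these coefficients modulo $p^{2}$, I shall compute $\left((1+X)^{p}\right)^{N}$ coefficientwise modulo $p^{2}$, reading ``$\equiv\operatorname{mod}p^{2}$'' for power series as coefficientwise congruence.

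The key preliminary step is to write $(1+X)^{p}=1+X^{p}+pS$, where $S=\sum_{j=1}^{p-1}b_{j}X^{j}$ and $b_{j}=\dfrac{1}{p}\dbinom{p}{j}$. Each $b_{j}$ is an integer: from the absorption identity (Proposition \ref{prop.binom.absorb}) we get $j\dbinom{p}{j}=p\dbinom{p-1}{j-1}$, and since $\gcd(j,p)=1$ for $1\leq j\leq p-1$, this forces $p\mid\dbinom{p}{j}$. I also record the symmetry $b_{p-i}=b_{i}$, which follows from $\dbinom{p}{p-i}=\dbinom{p}{i}$ (Proposition \ref{prop.binom.symm}). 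Now set $A=1+X^{p}$ (a unit of $\mathbb{Z}[[X]]$, with $A^{-1}=\sum_{k\geq0}(-1)^{k}X^{pk}$) and $B=pS$. Then $(1+X)^{Np}=A^{N}\left(1+A^{-1}B\right)^{N}=A^{N}\sum_{m\geq0}\dbinom{N}{m}(A^{-1}B)^{m}$, an $X$-adically convergent expansion valid for every $N\in\mathbb{Z}$ because $A^{-1}B$ has zero constant term. Since every coefficient of $A^{-1}B=pA^{-1}S$ is divisible by $p$, every coefficient of $(A^{-1}B)^{m}$ is divisible by $p^{m}$; hence only the terms $m=0$ and $m=1$ survive modulo $p^{2}$, giving the central congruence
\[
(1+X)^{Np}\equiv(1+X^{p})^{N}+Np\,(1+X^{p})^{N-1}S\operatorname{mod}p^{2}.
\]

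It remains to extract coefficients. For part \textbf{(a)} I take the coefficient of $X^{Kp+i}$. As $1\leq i\leq p-1$, the exponent $Kp+i$ is not a multiple of $p$, so $(1+X^{p})^{N}=\sum_{k\geq0}\dbinom{N}{k}X^{pk}$ contributes nothing. In the product $(1+X^{p})^{N-1}S=\sum_{k,j}\dbinom{N-1}{k}b_{j}X^{pk+j}$ the equation $pk+j=Kp+i$ with $1\leq j\leq p-1$ forces $j=i$ and $k=K$ (as $|i-j|<p$), so the surviving coefficient is $Np\dbinom{N-1}{K}b_{i}=N\dbinom{N-1}{K}\dbinom{p}{i}$, which is exactly \textbf{(a)}. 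For part \textbf{(b)} I take the coefficient of $X^{Kp-i}=X^{(K-1)p+(p-i)}$; the same reasoning forces $j=p-i$ and $k=K-1$, and using $b_{p-i}=b_{i}$ this yields $Np\dbinom{N-1}{K-1}b_{i}=N\dbinom{N-1}{K-1}\dbinom{p}{i}$, which is \textbf{(b)}. Throughout, the conventions of Definition \ref{def.binom.negative} make the identities hold verbatim even when $K\leq0$, since then the relevant binomial coefficient and the relevant power-series coefficient both vanish. Finally, part \textbf{(c)} is obtained by adding \textbf{(a)} and \textbf{(b)} and applying the recurrence $\dbinom{N-1}{K}+\dbinom{N-1}{K-1}=\dbinom{N}{K}$ (Proposition \ref{prop.binom.rec.neg}).

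The main obstacle is the uniform handling of the integer exponent $N$ together with the justification of the truncation modulo $p^{2}$: I must argue carefully that the binomial series $\left(1+A^{-1}B\right)^{N}$ is a valid identity in $\mathbb{Z}[[X]]$ for all $N\in\mathbb{Z}$ (not merely $N\geq0$), and that the divisibility of the coefficients of $A^{-1}B$ by $p$ propagates to divisibility by $p^{m}$ in the $m$-th power, so that only the linear term matters. Once this structural congruence is in place, the remainder is routine coefficient bookkeeping.
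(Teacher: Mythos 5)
Your proof is correct, but it follows a genuinely different route from the paper's. The paper stays entirely inside integer arithmetic: for part \textbf{(a)} it applies the absorption identity (Proposition \ref{prop.binom.absorb}) to get $(Kp+i)\dbinom{Np}{Kp+i}=Np\dbinom{Np-1}{Kp+i-1}$, evaluates $\dbinom{Np-1}{Kp+i-1}=\dbinom{(N-1)p+(p-1)}{Kp+(i-1)}$ modulo $p$ by Lucas's theorem (Theorem \ref{thm.lucas}), the explicit factor $p$ upgrading this to a congruence modulo $p^{2}$; it then notes that consequently $p\mid\dbinom{Np}{Kp+i}$, so the summand $Kp\dbinom{Np}{Kp+i}$ can be discarded modulo $p^{2}$, rewrites $p\dbinom{p-1}{i-1}=i\dbinom{p}{i}$ by absorption again, and cancels $i$ (which is coprime to $p^{2}$); parts \textbf{(b)} and \textbf{(c)} are then deduced exactly as you deduce them (the substitution $i\mapsto p-i$, $K\mapsto K-1$ with the symmetry $\dbinom{p}{p-i}=\dbinom{p}{i}$, and Pascal's rule). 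You instead prove the single structural congruence $(1+X)^{Np}\equiv(1+X^{p})^{N}+Np\,(1+X^{p})^{N-1}S\operatorname{mod}p^{2}$ coefficientwise and read off \textbf{(a)} and \textbf{(b)} as coefficient extractions. Your route is more conceptual and strictly stronger: the same congruence also yields Theorem \ref{thm.p2cong} (extract the coefficient of $X^{Kp}$, where the correction term contributes nothing), so Babbage's and Bailey's congruences fall out of one computation, and the method extends to moduli $p^{3}$ and higher by keeping more terms of the binomial series. What it costs is the formal-power-series infrastructure you yourself flag: one must justify that $\sum_{m\geq0}\dbinom{N}{m}Y^{m}=(1+Y)^{N}$ in $\mathbb{Z}[[X]]$ for \emph{every} $N\in\mathbb{Z}$, including negative $N$, whenever $Y$ has zero constant term --- this is true, for instance because Vandermonde convolution (Proposition \ref{prop.vandermonde.consequences}) shows that $\sum_{m\geq0}\dbinom{N}{m}Y^{m}$ and $\sum_{m\geq0}\dbinom{-N}{m}Y^{m}$ multiply to $1$ --- together with the $X$-adic truncation argument; the paper needs none of this, in keeping with its stated aim of maximal elementarity. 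One small inaccuracy worth fixing: your parenthetical that for $K\leq0$ ``the relevant binomial coefficient and the relevant power-series coefficient both vanish'' should say $K<0$ in part \textbf{(a)}, since for $K=0$ the term $k=K=0$ genuinely occurs and $\dbinom{N-1}{0}=1$; this is cosmetic and does not affect correctness.
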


Theorem \ref{thm.bailey} \textbf{(a)} is essentially the result \cite[Theorem
4]{Bailey91} by Bailey (see also \cite[(26)]{Mestro14}); in fact, it
transforms into \cite[Theorem 4]{Bailey91} if we rewrite $N\dbinom{N-1}{K}$ as
$\left(  K+1\right)  \dbinom{N}{K+1}$ (using Proposition
\ref{prop.binom.absorb}). We shall nevertheless give our own proof.

\begin{vershort}
\begin{proof}
[Proof of Theorem \ref{thm.bailey}.]From $i\in\left\{  1,2,\ldots,p-1\right\}
$, we conclude that both $i-1$ and $p-i$ are elements of $\left\{
0,1,\ldots,p-1\right\}  $. Notice also that $i$ is not divisible by $p$ (since
$i\in\left\{  1,2,\ldots,p-1\right\}  $); hence, $i$ is coprime to $p$ (since
$p$ is a prime). Therefore, $i$ is also coprime to $p^{2}$.

\textbf{(a)} Proposition \ref{prop.binom.absorb} (applied to $n=Np$ and
$k=Kp+i$) yields%
\begin{align}
\left(  Kp+i\right)  \dbinom{Np}{Kp+i}  &  =Np\dbinom{Np-1}{Kp+i-1}%
=Np\underbrace{\dbinom{\left(  N-1\right)  p+\left(  p-1\right)  }{Kp+\left(
i-1\right)  }}_{\substack{\equiv\dbinom{N-1}{K}\dbinom{p-1}{i-1}%
\operatorname{mod}p\\\text{(by Theorem \ref{thm.lucas}, applied to}%
\\a=N-1\text{, }b=K\text{, }c=p-1\text{ and }d=i-1\text{)}}}\nonumber\\
&  \equiv Np\dbinom{N-1}{K}\dbinom{p-1}{i-1}\operatorname{mod}p^{2}
\label{pf.thm.bailey.a.0}%
\end{align}
(notice that the presence of the $p$ factor has turned a congruence modulo $p$
into a congruence modulo $p^{2}$). Thus,%
\[
\left(  Kp+i\right)  \dbinom{Np}{Kp+i}\equiv Np\dbinom{N-1}{K}\dbinom
{p-1}{i-1}\equiv0\operatorname{mod}p,
\]
so that $0\equiv\underbrace{\left(  Kp+i\right)  }_{\equiv i\operatorname{mod}%
p}\dbinom{Np}{Kp+i}\equiv i\dbinom{Np}{Kp+i}\operatorname{mod}p$. We can
cancel $i$ from this congruence (since $i$ is coprime to $p$), and thus obtain
$0\equiv\dbinom{Np}{Kp+i}\operatorname{mod}p$. Hence, $\dbinom{Np}{Kp+i}$ is
divisible by $p$. Thus, $p\dbinom{Np}{Kp+i}$ is divisible by $p^{2}$. In other
words,%
\begin{equation}
p\dbinom{Np}{Kp+i}\equiv0\operatorname{mod}p^{2}. \label{pf.thm.bailey.a.1}%
\end{equation}
Now,%
\[
\left(  Kp+i\right)  \dbinom{Np}{Kp+i}=K\underbrace{p\dbinom{Np}{Kp+i}%
}_{\substack{\equiv0\operatorname{mod}p^{2}\\\text{(by
(\ref{pf.thm.bailey.a.1}))}}}+i\dbinom{Np}{Kp+i}\equiv i\dbinom{Np}%
{Kp+i}\operatorname{mod}p^{2}.
\]
Hence,%
\begin{align*}
i\dbinom{Np}{Kp+i}  &  \equiv\left(  Kp+i\right)  \dbinom{Np}{Kp+i}\equiv
Np\dbinom{N-1}{K}\dbinom{p-1}{i-1}\ \ \ \ \ \ \ \ \ \ \left(  \text{by
(\ref{pf.thm.bailey.a.0})}\right) \\
&  =N\dbinom{N-1}{K}\underbrace{p\dbinom{p-1}{i-1}}_{\substack{=i\dbinom{p}%
{i}\\\text{(by Proposition \ref{prop.binom.absorb})}}}=N\dbinom{N-1}%
{K}i\dbinom{p}{i}\operatorname{mod}p^{2}.
\end{align*}
We can cancel $i$ from this congruence (since $i$ is coprime to $p^{2}$), and
thus obtain%
\[
\dbinom{Np}{Kp+i}\equiv N\dbinom{N-1}{K}\dbinom{p}{i}\operatorname{mod}p^{2}.
\]
This proves Theorem \ref{thm.bailey} \textbf{(a)}.

\textbf{(b)} We have $i\in\left\{  1,2,\ldots,p-1\right\}  $ and thus
$p-i\in\left\{  1,2,\ldots,p-1\right\}  $. Hence, Theorem \ref{thm.bailey}
\textbf{(a)} (applied to $K-1$ and $p-i$ instead of $K$ and $i$) yields%
\[
\dbinom{Np}{\left(  K-1\right)  p+\left(  p-i\right)  }\equiv N\dbinom
{N-1}{K-1}\underbrace{\dbinom{p}{p-i}}_{\substack{=\dbinom{p}{i}\\\text{(by
Proposition \ref{prop.binom.symm})}}}=N\dbinom{N-1}{K-1}\dbinom{p}%
{i}\operatorname{mod}p^{2}.
\]
In view of $\left(  K-1\right)  p+\left(  p-i\right)  =Kp-i$, this rewrites as%
\[
\dbinom{Np}{Kp-i}\equiv N\dbinom{N-1}{K-1}\dbinom{p}{i}\operatorname{mod}%
p^{2}.
\]
This proves Theorem \ref{thm.bailey} \textbf{(b)}.

\textbf{(c)} We have
\begin{align*}
&  \underbrace{\dbinom{Np}{Kp+i}}_{\substack{\equiv N\dbinom{N-1}{K}\dbinom
{p}{i}\operatorname{mod}p^{2}\\\text{(by Theorem \ref{thm.bailey}
\textbf{(a)})}}}+\underbrace{\dbinom{Np}{Kp-i}}_{\substack{\equiv
N\dbinom{N-1}{K-1}\dbinom{p}{i}\operatorname{mod}p^{2}\\\text{(by Theorem
\ref{thm.bailey} \textbf{(b)})}}}\\
&  \equiv N\dbinom{N-1}{K}\dbinom{p}{i}+N\dbinom{N-1}{K-1}\dbinom{p}{i}\\
&  =N\underbrace{\left(  \dbinom{N-1}{K-1}+\dbinom{N-1}{K}\right)
}_{\substack{=\dbinom{N}{K}\\\text{(by Proposition \ref{prop.binom.rec.neg})}%
}}\dbinom{p}{i}=N\dbinom{N}{K}\dbinom{p}{i}\operatorname{mod}p^{2}.
\end{align*}
This proves Theorem \ref{thm.bailey} \textbf{(c)}.
\end{proof}
\end{vershort}

\begin{verlong}
Before we prove Theorem \ref{thm.bailey}, let us recall two basic facts:

\begin{lemma}
\label{lem.2.1.p44.lem1}Let $p$ be a prime. Let $k\in\left\{  0,1,\ldots
,p-1\right\}  $. Then, $k!$ is coprime to $p$.
\end{lemma}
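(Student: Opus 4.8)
The plan is to write $k!$ as the product $1 \cdot 2 \cdots k$ and argue that $p$ divides none of its factors, so that $p$ cannot divide the product either. First I would dispose of the edge case $k = 0$, where $k! = 1$ is trivially coprime to $p$. For $k \geq 1$, each factor $j$ in the product ranges over $\left\{ 1, 2, \ldots, k \right\}$, and since $k \leq p - 1 < p$ we have $0 < j < p$ for every such $j$. Hence no factor $j$ is divisible by $p$, because a positive integer strictly smaller than $p$ cannot be a multiple of $p$.

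The key step is then to invoke the defining property of a prime: since $p$ is prime and $p$ divides none of the integers $1, 2, \ldots, k$, it follows (by induction on $k$, or by repeated application of Euclid's lemma) that $p$ does not divide their product $k!$. Finally, because $p$ is prime, its only positive divisors are $1$ and $p$, so $\gcd\left( k!, p \right)$ can only be $1$ or $p$; since we have just shown $p \nmid k!$, this greatest common divisor must equal $1$, which is exactly the assertion that $k!$ is coprime to $p$.

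There is no serious obstacle here, the statement being an immediate consequence of the fact that a prime divides a product only when it divides one of the factors, combined with the observation that every factor of $k!$ is a positive integer below $p$. The only points deserving mild care are the treatment of $k = 0$ (which uses the empty-product convention $0! = 1$) and the logical passage from \textquotedblleft$p \nmid k!$\textquotedblright\ to \textquotedblleft$k!$ is coprime to $p$\textquotedblright, which genuinely relies on $p$ being prime and not merely on nondivisibility.
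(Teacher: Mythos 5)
Your proof is correct. Note that the paper itself gives no proof of this lemma at all: it simply cites it as known (Lemma 5.2 of the author's note on Fleck's binomial congruence), so there is no internal argument to compare against. Your argument is the standard one and is complete: every factor of $k!$ lies strictly between $0$ and $p$ (after disposing of $k=0$ via $0!=1$), so none is divisible by $p$; Euclid's lemma, applied repeatedly, then gives $p \nmid k!$; and since the only positive divisors of the prime $p$ are $1$ and $p$, non-divisibility indeed yields $\gcd\left(k!,p\right)=1$. You are also right to flag that this last passage genuinely uses primality, not mere non-divisibility.
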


Lemma \ref{lem.2.1.p44.lem1} is \cite[Lemma 5.2]{fleck}. We can somewhat
improve Lemma \ref{lem.2.1.p44.lem1}:

\begin{lemma}
\label{lem.2.1.p44.lem1p2}Let $p$ be a prime. Let $i\in\mathbb{Z}$ be such
that $i$ is coprime to $p$. Then, $i$ is coprime to $p^{2}$.
\end{lemma}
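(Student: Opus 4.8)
The plan is to reduce this to B\'ezout's identity. Since $i$ is coprime to $p$, the greatest common divisor $\gcd(i,p)$ equals $1$, so there exist integers $a$ and $b$ with $ai+bp=1$. This is the only external fact I would invoke, and notice that it holds for any two integers with gcd $1$; the primality of $p$ plays no role in this argument.

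Next I would square this identity. We have $1=(ai+bp)^2=a^2i^2+2abip+b^2p^2=i\left(a^2i+2abp\right)+p^2b^2$. Thus $1$ is exhibited as an integer linear combination of $i$ and $p^2$, namely $1=i\,u+p^2\,v$ with $u=a^2i+2abp\in\mathbb{Z}$ and $v=b^2\in\mathbb{Z}$. Any common divisor of $i$ and $p^2$ must therefore divide $1$, and hence $\gcd(i,p^2)=1$; in other words, $i$ is coprime to $p^2$, which is exactly the claim.

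There is essentially no obstacle here: the statement is a special case of the elementary fact that if $i$ is coprime to $n$ then $i$ is coprime to $n^k$ for every $k\in\mathbb{N}$, proved the same way by raising the B\'ezout relation to the $k$-th power. An equally short alternative avoids B\'ezout entirely: the divisor $d=\gcd(i,p^2)$ divides $p^2$, hence, since $p$ is prime, $d\in\left\{1,p,p^2\right\}$; but $d=p$ or $d=p^2$ would force $p\mid i$, contradicting that $i$ is coprime to $p$, so $d=1$. I would present the B\'ezout argument as the main proof, since it is self-contained and does not require enumerating the divisors of $p^2$.
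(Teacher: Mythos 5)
Your B\'ezout argument is correct and complete: squaring the relation $ai+bp=1$ exhibits $1$ as an integer combination of $i$ and $p^{2}$, which immediately forces $\gcd\left(  i,p^{2}\right)  =1$. However, it is a genuinely different route from the paper's. The paper proves the lemma by contradiction using the divisor structure of $p^{2}$: it sets $g=\gcd\left(  i,p^{2}\right)  $, notes that $g$ is a positive divisor of $p^{2}$ and hence (by primality of $p$) lies in $\left\{  1,p,p^{2}\right\}  $, and shows that $g\neq1$ would give $p\mid g\mid i$ together with $p\mid p$, whence $p\mid\gcd\left(  i,p\right)  =1$, a contradiction --- this is precisely the ``alternative'' you sketch in your closing paragraph. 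The trade-off: your main argument is more general (it shows $i$ coprime to $n$ implies $i$ coprime to $n^{k}$ for any integer $n$ and any $k$, with primality playing no role) but imports B\'ezout's identity as an external tool, whereas the paper's argument leans on the primality hypothesis already present in the statement and needs only the elementary facts that a positive divisor of $p^{2}$ is $1$, $p$, or $p^{2}$ and that common divisors divide the gcd. Both are valid; yours buys generality, the paper's buys minimality of prerequisites.
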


\begin{proof}
[Proof of Lemma \ref{lem.2.1.p44.lem1p2}.]We assumed that $i$ is coprime to
$p$. In other words, $\gcd\left(  i,p\right)  =1$.

Let $g=\gcd\left(  i,p^{2}\right)  $. Assume (for the sake of contradiction)
that $g\neq1$.

Clearly, $\gcd\left(  i,p^{2}\right)  $ is a positive integer (since $i$ is a
positive integer). In other words, $g$ is a positive integer (since
$g=\gcd\left(  i,p^{2}\right)  $). Hence, $g$ is a positive integer and a
divisor of $p^{2}$ (since $g=\gcd\left(  i,p^{2}\right)  \mid p^{2}$). Thus,
$g$ is a positive divisor of $p^{2}$. But the only positive divisors of
$p^{2}$ are $1$, $p$ and $p^{2}$ (since $p$ is a prime). Hence, every positive
divisor of $p^{2}$ belongs to the set $\left\{  1,p,p^{2}\right\}  $. In other
words, if $r$ is a positive divisor of $p^{2}$, then $r\in\left\{
1,p,p^{2}\right\}  $. Applying this to $r=g$, we obtain $g\in\left\{
1,p,p^{2}\right\}  $ (since $g$ is a positive divisor of $p^{2}$). Combining
this with $g\neq1$, we conclude that $g\in\left\{  1,p,p^{2}\right\}
\setminus\left\{  1\right\}  \subseteq\left\{  p,p^{2}\right\}  $. Hence,
$p\mid g$\ \ \ \ \footnote{\textit{Proof.} We have either $g=p$ or $g=p^{2}$
(since $g\in\left\{  p,p^{2}\right\}  $). Thus, we are in one of the following
two cases:
\par
\textit{Case 1:} We have $g=p$.
\par
\textit{Case 2:} We have $g=p^{2}$.
\par
Let us first consider Case 1. In this case, we have $g=p$. Hence, $p\mid p=g$.
Thus, $p\mid g$ is proven in Case 1.
\par
Let us now consider Case 2. In this case, we have $g=p^{2}$. Hence, $p\mid
pp=p^{2}=g$. Thus, $p\mid g$ is proven in Case 2.
\par
We have now proven $p\mid g$ in each of the two Cases 1 and 2. Since these two
Cases cover all possibilities, we thus conclude that $p\mid g$ always holds.
Qed.}. Thus, $p\mid g=\gcd\left(  i,p^{2}\right)  \mid i$. Also, $p\mid p$.

But one of the most basic properties of the greatest common divisor says the
following: If three positive integers $a$, $b$ and $c$ satisfy $c\mid a$ and
$c\mid b$, then they also satisfy $c\mid\gcd\left(  a,b\right)  $.

Applying this to $a=i$, $b=p$ and $c=p$, we conclude that $p\mid\gcd\left(
i,p\right)  $ (since $p\mid i$ and $p\mid p$). Thus, $p\mid\gcd\left(
i,p\right)  =1$, so that $p\leq1$ (since $p$ and $1$ are positive integers).
This contradicts the fact that $p$ is a prime. This contradiction shows that
our assumption (that $g\neq1$) was false. Hence, we cannot have $g\neq1$.
Thus, we must have $g=1$. Hence, $\gcd\left(  i,p^{2}\right)  =g=1$. In other
words, $i$ is coprime to $p^{2}$. This proves Lemma \ref{lem.2.1.p44.lem1p2}.
\end{proof}

\begin{lemma}
\label{lem.2.1.p44.lem2}Let $b$ and $c$ be integers such that $c$ is nonzero
and such that $b$ is coprime to $c$. Let $a$ and $a^{\prime}$ be integers such
that $ba\equiv ba^{\prime}\operatorname{mod}c$. Then, $a\equiv a^{\prime
}\operatorname{mod}c$.
\end{lemma}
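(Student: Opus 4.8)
The plan is to cancel the factor $b$ from the congruence $ba\equiv ba'\operatorname{mod}c$ by using the coprimality of $b$ and $c$, which furnishes a multiplicative inverse of $b$ modulo $c$ via B\'ezout's identity. The whole argument is a single application of the standard fact that a factor coprime to the modulus may be divided out of a congruence; I spell out the steps only for completeness.

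First I would restate both the hypothesis and the conclusion as divisibility assertions. The hypothesis $ba\equiv ba'\operatorname{mod}c$ says precisely that $c\mid ba-ba'=b\left(a-a'\right)$, and the desired conclusion $a\equiv a'\operatorname{mod}c$ says precisely that $c\mid a-a'$. Thus the task reduces to deriving $c\mid a-a'$ from $c\mid b\left(a-a'\right)$, given that $b$ is coprime to $c$.

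The key step is then the classical consequence of B\'ezout's identity: since $\gcd\left(b,c\right)=1$, there exist integers $x$ and $y$ with $bx+cy=1$. Multiplying this relation by $d:=a-a'$ gives $d=\left(bd\right)x+c\left(dy\right)$. Here the first summand $\left(bd\right)x$ is divisible by $c$ because $c\mid bd$ (this is exactly the hypothesis $c\mid b\left(a-a'\right)$), and the second summand $c\left(dy\right)$ is visibly divisible by $c$. Hence $c\mid d=a-a'$, which is the required conclusion $a\equiv a'\operatorname{mod}c$.

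There is no real obstacle here; the only step carrying any content is the invocation of B\'ezout's identity (equivalently, of Euclid's lemma that coprimality lets one cancel $b$), and the hypothesis that $c$ is nonzero merely guarantees that congruence modulo $c$ is the usual notion. In the paper this lemma is only ever applied with $c$ a power of a prime and $b$ coprime to it, so both hypotheses hold automatically.
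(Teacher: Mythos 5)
Your proof is correct. For comparison: the paper does not actually prove this lemma internally at all --- it states it and immediately dispatches it with a citation to an external note of the author (Lemma 5.3 of the reference listed as [Grinbe16]), so there is no in-paper argument to match yours against. Your route --- restate both the hypothesis and the conclusion as divisibility claims, invoke B\'ezout's identity to write $bx+cy=1$ from $\gcd\left(b,c\right)=1$, multiply by $d=a-a'$ to obtain $d=\left(bd\right)x+c\left(dy\right)$, and note that both summands are divisible by $c$ (the first because $c\mid b\left(a-a'\right)$ is exactly the hypothesis) --- is the standard self-contained proof of this cancellation principle, and every step is sound. If anything, your version is more informative than what the paper offers, since it keeps the argument elementary and local; the only thing the citation buys the paper is brevity. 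Your closing remarks are also accurate: the nonzeroness of $c$ plays no essential role in the argument, and in the paper the lemma is indeed only ever invoked with $c$ equal to $p$ or $p^{2}$ and $b$ coprime to it.
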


Lemma \ref{lem.2.1.p44.lem2} is \cite[Lemma 5.3]{fleck}.

Here is yet another trivial lemma:

\begin{lemma}
\label{lem.congruence-times-p}Let $a$, $b$ and $c$ be three integers such that
$c$ is nonzero. Assume that $a\equiv b\operatorname{mod}c$. Then, $ca\equiv
cb\operatorname{mod}c^{2}$.
\end{lemma}

\begin{proof}
[Proof of Lemma \ref{lem.congruence-times-p}.]We have $a\equiv
b\operatorname{mod}c$. In other words, $c\mid a-b$. In other words, there
exists some $m\in\mathbb{Z}$ such that $a-b=cm$. Consider this $m$. Now,
$ca-cb=c\underbrace{\left(  a-b\right)  }_{=cm}=ccm=c^{2}m$. But $c^{2}\mid
c^{2}m=ca-cb$. In other words, $ca\equiv cb\operatorname{mod}c^{2}$. This
proves Lemma \ref{lem.congruence-times-p}.
\end{proof}

\begin{proof}
[Proof of Theorem \ref{thm.bailey}.]From $i\in\left\{  1,2,\ldots,p-1\right\}
$, we conclude that $p-i\in\left\{  1,2,\ldots,p-1\right\}  \subseteq\left\{
0,1,\ldots,p-1\right\}  $. Also, $i\in\left\{  1,2,\ldots,p-1\right\}
\subseteq\left\{  0,1,\ldots,p-1\right\}  $. Moreover, $i$ is a positive
integer (since $i\in\left\{  1,2,\ldots,p-1\right\}  $). Also, from
$i\in\left\{  1,2,\ldots,p-1\right\}  $, we obtain $i\leq p-1<p$.

Also, the integer $i$ is not divisible by $p$\ \ \ \ \footnote{\textit{Proof.}
Assume the contrary. Thus, the integer $i$ is divisible by $p$. Since both $i$
and $p$ are positive integers, we thus conclude that $i\geq p$. This
contradicts $i<p$.
\par
This contradiction shows that our assumption was wrong, qed.}. Hence, $i$ is
coprime to $p$ (since $p$ is a prime). Therefore, $i$ is also coprime to
$p^{2}$ (by Lemma \ref{lem.2.1.p44.lem1p2}).

Proposition \ref{prop.binom.absorb} (applied to $n=p$ and $k=i$) yields%
\begin{equation}
i\dbinom{p}{i}=p\dbinom{p-1}{i-1}. \label{pf.thm.bailey.long.absorb-p-i}%
\end{equation}

\textbf{(a)} Recall that $p$ is a prime; thus, $p>1$. Hence, $p-1>0$. Thus,
$p-1$ is a positive integer. Hence, $p-1\in\left\{  0,1,\ldots,p-1\right\}  $.
Also, $i\in\left\{  1,2,\ldots,p-1\right\}  $; thus, $i-1\in\left\{
0,1,\ldots,\left(  p-1\right)  -1\right\}  \subseteq\left\{  0,1,\ldots
,p-1\right\}  $ . Hence, Theorem \ref{thm.lucas} (applied to $a=N-1$, $b=K$,
$c=p-1$ and $d=i-1$) yields%
\[
\dbinom{\left(  N-1\right)  p+\left(  p-1\right)  }{Kp+\left(  i-1\right)
}\equiv\dbinom{N-1}{K}\dbinom{p-1}{i-1}\operatorname{mod}p.
\]
In view of $\left(  N-1\right)  p+\left(  p-1\right)  =Np-1$ and $Kp+\left(
i-1\right)  =Kp+i-1$, this rewrites as%
\[
\dbinom{Np-1}{Kp+i-1}\equiv\dbinom{N-1}{K}\dbinom{p-1}{i-1}\operatorname{mod}%
p.
\]
Thus, Lemma \ref{lem.congruence-times-p} (applied to $a=\dbinom{Np-1}{Kp+i-1}%
$, $b=\dbinom{N-1}{K}\dbinom{p-1}{i-1}$ and $c=p$) yields%
\begin{equation}
p\dbinom{Np-1}{Kp+i-1}\equiv p\dbinom{N-1}{K}\dbinom{p-1}{i-1}%
\operatorname{mod}p^{2}. \label{pf.thm.bailey.long.1}%
\end{equation}

Proposition \ref{prop.binom.absorb} (applied to $n=Np$ and $k=Kp+i$) yields%
\begin{align}
\left(  Kp+i\right)  \dbinom{Np}{Kp+i}  &  =N\underbrace{p\dbinom
{Np-1}{Kp+i-1}}_{\substack{\equiv p\dbinom{N-1}{K}\dbinom{p-1}{i-1}%
\operatorname{mod}p^{2}\\\text{(by (\ref{pf.thm.bailey.long.1}))}}}\nonumber\\
&  \equiv Np\dbinom{N-1}{K}\dbinom{p-1}{i-1}\operatorname{mod}p^{2}.
\label{pf.thm.bailey.long.a.0}%
\end{align}
In other words, $p^{2}\mid\left(  Kp+i\right)  \dbinom{Np}{Kp+i}%
-Np\dbinom{N-1}{K}\dbinom{p-1}{i-1}$. Hence,
\[
p\mid p^{2}\mid\left(  Kp+i\right)  \dbinom{Np}{Kp+i}-Np\dbinom{N-1}{K}%
\dbinom{p-1}{i-1}.
\]
In other words,%
\[
\left(  Kp+i\right)  \dbinom{Np}{Kp+i}\equiv Np\dbinom{N-1}{K}\dbinom
{p-1}{i-1}\equiv0=i\cdot0\operatorname{mod}p.
\]
Therefore,
\[
i\cdot0\equiv\underbrace{\left(  Kp+i\right)  }_{\equiv i\operatorname{mod}%
p}\dbinom{Np}{Kp+i}\equiv i\dbinom{Np}{Kp+i}\operatorname{mod}p.
\]
But $i$ is coprime to $p$. Hence, Lemma \ref{lem.2.1.p44.lem2} (applied to
$b=i$, $c=p$, $a=0$ and $a^{\prime}=\dbinom{Np}{Kp+i}$) shows that
$0\equiv\dbinom{Np}{Kp+i}\operatorname{mod}p$. In other words, $\dbinom
{Np}{Kp+i}\equiv0\operatorname{mod}p$. Thus, Lemma
\ref{lem.congruence-times-p} (applied to $a=\dbinom{Np}{Kp+i}$, $b=0$ and
$c=p$) yields%
\begin{equation}
p\dbinom{Np}{Kp+i}\equiv p\cdot0=0\operatorname{mod}p^{2}.
\label{pf.thm.bailey.long.a.1}%
\end{equation}
Now,%
\[
\left(  Kp+i\right)  \dbinom{Np}{Kp+i}=K\underbrace{p\dbinom{Np}{Kp+i}%
}_{\substack{\equiv0\operatorname{mod}p^{2}\\\text{(by
(\ref{pf.thm.bailey.long.a.1}))}}}+i\dbinom{Np}{Kp+i}\equiv i\dbinom{Np}%
{Kp+i}\operatorname{mod}p^{2}.
\]
Hence,%
\begin{align*}
i\dbinom{Np}{Kp+i}  &  \equiv\left(  Kp+i\right)  \dbinom{Np}{Kp+i}\equiv
Np\dbinom{N-1}{K}\dbinom{p-1}{i-1}\ \ \ \ \ \ \ \ \ \ \left(  \text{by
(\ref{pf.thm.bailey.long.a.0})}\right) \\
&  =N\dbinom{N-1}{K}\underbrace{p\dbinom{p-1}{i-1}}_{\substack{=i\dbinom{p}%
{i}\\\text{(by (\ref{pf.thm.bailey.long.absorb-p-i}))}}}=N\dbinom{N-1}%
{K}i\dbinom{p}{i}\\
&  =iN\dbinom{N-1}{K}\dbinom{p}{i}\operatorname{mod}p^{2}.
\end{align*}
But $i$ is coprime to $p^{2}$. Hence, Lemma \ref{lem.2.1.p44.lem2} (applied to
$b=i$, $c=p^{2}$, $a=\dbinom{Np}{Kp+i}$ and $a^{\prime}=N\dbinom{N-1}%
{K}\dbinom{p}{i}$) shows that%
\[
\dbinom{Np}{Kp+i}\equiv N\dbinom{N-1}{K}\dbinom{p}{i}\operatorname{mod}p^{2}.
\]
This proves Theorem \ref{thm.bailey} \textbf{(a)}.

\textbf{(b)} We have $i\in\left\{  1,2,\ldots,p-1\right\}  $ and thus
$p-i\in\left\{  1,2,\ldots,p-1\right\}  $. Also, from $i\in\left\{
1,2,\ldots,p-1\right\}  $, we obtain $i\leq p-1<p$, so that $p\geq i$.
Finally, $i\in\left\{  1,2,\ldots,p-1\right\}  \subseteq\mathbb{N}$. Thus,
Proposition \ref{prop.binom.symm} (applied to $m=p$ and $n=i$) yields%
\begin{equation}
\dbinom{p}{i}=\dbinom{p}{p-i}. \label{pf.thm.bailey.long.b.symm}%
\end{equation}

But $p-i\in\left\{  1,2,\ldots,p-1\right\}  $. Thus, Theorem \ref{thm.bailey}
\textbf{(a)} (applied to $K-1$ and $p-i$ instead of $K$ and $i$) yields%
\[
\dbinom{Np}{\left(  K-1\right)  p+\left(  p-i\right)  }\equiv N\dbinom
{N-1}{K-1}\underbrace{\dbinom{p}{p-i}}_{\substack{=\dbinom{p}{i}\\\text{(by
(\ref{pf.thm.bailey.long.b.symm}))}}}=N\dbinom{N-1}{K-1}\dbinom{p}%
{i}\operatorname{mod}p^{2}.
\]
In view of $\left(  K-1\right)  p+\left(  p-i\right)  =Kp-i$, this rewrites as%
\[
\dbinom{Np}{Kp-i}\equiv N\dbinom{N-1}{K-1}\dbinom{p}{i}\operatorname{mod}%
p^{2}.
\]
This proves Theorem \ref{thm.bailey} \textbf{(b)}.

\textbf{(c)} Proposition \ref{prop.binom.rec.neg} (applied to $m=N$ and $n=K$)
yields%
\begin{equation}
\dbinom{N}{K}=\dbinom{N-1}{K-1}+\dbinom{N-1}{K}=\dbinom{N-1}{K}+\dbinom
{N-1}{K-1}. \label{pf.thm.bailey.long.c.rec}%
\end{equation}

Now,
\begin{align*}
&  \underbrace{\dbinom{Np}{Kp+i}}_{\substack{\equiv N\dbinom{N-1}{K}\dbinom
{p}{i}\operatorname{mod}p^{2}\\\text{(by Theorem \ref{thm.bailey}
\textbf{(a)})}}}+\underbrace{\dbinom{Np}{Kp-i}}_{\substack{\equiv
N\dbinom{N-1}{K-1}\dbinom{p}{i}\operatorname{mod}p^{2}\\\text{(by Theorem
\ref{thm.bailey} \textbf{(b)})}}}\\
&  \equiv N\dbinom{N-1}{K}\dbinom{p}{i}+N\dbinom{N-1}{K-1}\dbinom{p}{i}\\
&  =N\underbrace{\left(  \dbinom{N-1}{K}+\dbinom{N-1}{K-1}\right)
}_{\substack{=\dbinom{N}{K}\\\text{(by (\ref{pf.thm.bailey.long.c.rec}))}%
}}\dbinom{p}{i}=N\dbinom{N}{K}\dbinom{p}{i}\operatorname{mod}p^{2}.
\end{align*}
This proves Theorem \ref{thm.bailey} \textbf{(c)}.
\end{proof}
\end{verlong}

\subsection{Two congruences for polynomials}

Now, we recall that $\mathbb{Z}\left[  X\right]  $ is the ring of all
polynomials in one indeterminate $X$ with integer coefficients.

\begin{lemma}
\label{lem.A3}Let $p$ be a prime. Let $c\in\mathbb{Z}$. Let $P\in
\mathbb{Z}\left[  X\right]  $ be a polynomial of degree $<2p-1$. Then,
$\sum\limits_{l=0}^{p-1}\left(  P\left(  cp+l\right)  -P\left(  l\right)
\right)  \equiv0\operatorname{mod}p^{2}$.
\end{lemma}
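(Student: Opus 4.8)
The plan is to reduce to a monomial and then expand. Both sides of the claimed congruence depend $\mathbb{Z}$-linearly on $P$, and every polynomial of degree $<2p-1$ is a $\mathbb{Z}$-linear combination of the monomials $X^{0},X^{1},\ldots,X^{2p-2}$; hence it suffices to prove the statement when $P=X^{k}$ for a fixed $k\in\{0,1,\ldots,2p-2\}$. The case $k=0$ is trivial, since then $P(cp+l)-P(l)=1-1=0$ for every $l$, so I would henceforth assume $1\le k\le 2p-2$.

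Next I would expand by the binomial theorem: $(cp+l)^{k}=\sum_{j=0}^{k}\dbinom{k}{j}(cp)^{j}l^{k-j}$. The $j=0$ term is exactly $l^{k}=P(l)$ and cancels against $-P(l)$, while every term with $j\ge 2$ carries a factor $(cp)^{j}$ divisible by $p^{2}$. Thus only the $j=1$ term survives modulo $p^{2}$, giving $P(cp+l)-P(l)\equiv kcp\,l^{k-1}\operatorname{mod}p^{2}$. Summing over $l$ then reduces the claim to showing that $kcp\sum_{l=0}^{p-1}l^{k-1}$ is divisible by $p^{2}$, equivalently that $kc\sum_{l=0}^{p-1}l^{k-1}$ is divisible by $p$.

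The crux, and the step I expect to be the main obstacle, is controlling the power sum $\sum_{l=0}^{p-1}l^{k-1}$ through Theorem \ref{thm.powersums.p}, which yields $\sum_{l=0}^{p-1}l^{k-1}\equiv 0\operatorname{mod}p$ exactly when $k-1$ is \emph{not} a positive multiple of $p-1$. The decisive observation is that, since $1\le k\le 2p-2$, the exponent $k-1$ lies in $\{0,1,\ldots,2p-3\}$, and the only positive multiple of $p-1$ in this range is $p-1$ itself (because $2(p-1)=2p-2>2p-3$); this forces $k=p$. I would therefore split into two cases. If $k\ne p$, then $k-1$ is not a positive multiple of $p-1$, so Theorem \ref{thm.powersums.p} gives $p\mid\sum_{l=0}^{p-1}l^{k-1}$ and hence $p^{2}\mid kcp\sum_{l=0}^{p-1}l^{k-1}$. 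If $k=p$, then $p\mid k$, so $kcp=p^{2}c$ is already divisible by $p^{2}$, irrespective of the power sum.

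What makes the argument close cleanly is precisely this dovetailing: the unique exponent for which the power-sum theorem fails is exactly the exponent $k=p$ for which the prefactor $k$ supplies the missing factor of $p$. Once this is noticed, both cases are immediate, and recombining the monomials by linearity yields the statement for general $P$. Everything beyond the case split is routine binomial expansion and $p$-adic counting.
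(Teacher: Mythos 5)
Your proof is correct and follows essentially the same route as the paper's: reduction to monomials $X^k$ by linearity, binomial expansion isolating the $kcp\,l^{k-1}$ term modulo $p^2$, and the case split in which $k=p$ is handled by the prefactor while $k\neq p$ invokes Theorem \ref{thm.powersums.p} via the observation that $p-1$ is the only positive multiple of $p-1$ at most $2p-3$. No gaps; the dovetailing you highlight is exactly the paper's argument.
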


\begin{vershort}
\begin{proof}
[Proof of Lemma \ref{lem.A3}.]WLOG assume that $P=X^{k}$ for some
$k\in\left\{  0,1,\ldots,2p-2\right\}  $ (since the congruence we are proving
depends $\mathbb{Z}$-linearly on $P$). If $k=0$, then Lemma \ref{lem.A3} is
easily checked (because in this case, $P$ is constant). Thus, WLOG assume that
$k\neq0$. Hence, $k$ is a positive integer (since $k\in\mathbb{N}$). Thus,
$k-1\in\mathbb{N}$.

Each $l\in\left\{  0,1,\ldots,p-1\right\}  $ satisfies%
\begin{align*}
P\left(  cp+l\right)   &  =\left(  cp+l\right)  ^{k}%
\ \ \ \ \ \ \ \ \ \ \left(  \text{since }P=X^{k}\right) \\
&  =\sum_{i=0}^{k}\dbinom{k}{i}\left(  cp\right)  ^{i}l^{k-i}%
\ \ \ \ \ \ \ \ \ \ \left(  \text{by the binomial formula}\right) \\
&  =\underbrace{\left(  cp\right)  ^{0}l^{k-0}}_{=l^{k}}+k\underbrace{\left(
cp\right)  ^{1}}_{=cp}l^{k-1}+\sum_{i=2}^{k}\dbinom{k}{i}\underbrace{\left(
cp\right)  ^{i}}_{\substack{\equiv0\operatorname{mod}p^{2}\\\text{(since
}i\geq2\text{)}}}l^{k-i}\\
&  \equiv l^{k}+kcpl^{k-1}+\underbrace{\sum_{i=2}^{k}\dbinom{k}{i}0l^{k-i}%
}_{=0}=l^{k}+kcpl^{k-1}\operatorname{mod}p^{2}%
\end{align*}
and $P\left(  l\right)  =l^{k}$ (since $P=X^{k}$). Thus,%
\[
\sum\limits_{l=0}^{p-1}\left(  \underbrace{P\left(  cp+l\right)  }_{\equiv
l^{k}+kcpl^{k-1}\operatorname{mod}p^{2}}-\underbrace{P\left(  l\right)
}_{=l^{k}}\right)  \equiv\sum_{l=0}^{p-1}\underbrace{\left(  l^{k}%
+kcpl^{k-1}-l^{k}\right)  }_{=kcpl^{k-1}}=kcp\sum\limits_{l=0}^{p-1}%
l^{k-1}\operatorname{mod}p^{2}.
\]
The claim of Lemma \ref{lem.A3} now becomes obvious if $k=p$ (because if
$k=p$, then $kcp$ is already divisible by $p^{2}$); thus, we WLOG assume that
$k\neq p$. Hence, $k-1\neq p-1$.

If $k-1$ was a positive multiple of $p-1$, then we would have $k-1=p-1$ (since
$k\in\left\{  0,1,\ldots,2p-2\right\}  $), which would contradict $k-1\neq
p-1$. Hence, $k-1$ is not a positive multiple of $p-1$. Thus, Theorem
\ref{thm.powersums.p} (applied to $k-1$ instead of $k$) yields $\sum
_{l=0}^{p-1}l^{k-1}\equiv0\operatorname{mod}p$. Thus, $p\sum\limits_{l=0}%
^{p-1}l^{k-1}\equiv0\operatorname{mod}p^{2}$, so that%
\[
\sum\limits_{l=0}^{p-1}\left(  P\left(  cp+l\right)  -P\left(  l\right)
\right)  \equiv kc\underbrace{p\sum\limits_{l=0}^{p-1}l^{k-1}}_{\equiv
0\operatorname{mod}p^{2}}\equiv0\operatorname{mod}p^{2}.
\]
This proves Lemma \ref{lem.A3}.
\end{proof}
\end{vershort}

\begin{verlong}
Before we prove Lemma \ref{lem.A3}, let us state some basic facts.

\begin{proposition}
\label{prop.binom3}Let $k\in\mathbb{N}$. Let $a\in\mathbb{Z}$ and
$b\in\mathbb{Z}$. Then,%
\[
a^{k}-b^{k}=\left(  a-b\right)  \sum_{i=0}^{k-1}a^{i}b^{k-1-i}.
\]

\end{proposition}

\begin{proof}
[Proof of Proposition \ref{prop.binom3}.]We have $k\in\mathbb{N}$. Hence,
$0\in\left\{  0,1,\ldots,k\right\}  $. Thus, we can split off the addend for
$i=0$ from the sum $\sum_{i=0}^{k}a^{i}b^{k-i+1}$. We thus obtain
\[
\sum_{i=0}^{k}a^{i}b^{k-i}=\underbrace{a^{0}}_{=1}\underbrace{b^{k-0}}%
_{=b^{k}}+\sum_{i=1}^{k}a^{i}b^{k-i}=b^{k}+\sum_{i=1}^{k}a^{i}b^{k-i}.
\]
Hence,%
\begin{equation}
\sum_{i=1}^{k}a^{i}b^{k-i}=\sum_{i=0}^{k}a^{i}b^{k-i}-b^{k}.
\label{pf.prop.binom3.1}%
\end{equation}

Clearly, $k-k=0$, so that $b^{k-k}=b^{0}=1$.

We have $k\in\mathbb{N}$. Thus, $k\in\left\{  0,1,\ldots,k\right\}  $. Thus,
we can split off the addend for $i=k$ from the sum $\sum_{i=0}^{k}%
a^{i}b^{k-i+1}$. We thus obtain
\[
\sum_{i=0}^{k}a^{i}b^{k-i}=a^{k}\underbrace{b^{k-k}}_{=1}+\sum_{i=0}%
^{k-1}a^{i}b^{k-i}=a^{k}+\sum_{i=0}^{k-1}a^{i}b^{k-i}.
\]
Hence,%
\begin{equation}
\sum_{i=0}^{k-1}a^{i}b^{k-i}=\sum_{i=0}^{k}a^{i}b^{k-i}-a^{k}.
\label{pf.prop.binom3.2}%
\end{equation}

Now,%
\begin{align*}
\left(  a-b\right)  \sum_{i=0}^{k-1}a^{i}b^{k-1-i}  &  =\underbrace{a\sum
_{i=0}^{k-1}a^{i}b^{k-1-i}}_{=\sum_{i=0}^{k-1}aa^{i}b^{k-1-i}}%
-\underbrace{b\sum_{i=0}^{k-1}a^{i}b^{k-1-i}}_{=\sum_{i=0}^{k-1}%
a^{i}bb^{k-1-i}}=\sum_{i=0}^{k-1}\underbrace{aa^{i}}_{=a^{i+1}}b^{k-1-i}%
-\sum_{i=0}^{k-1}a^{i}\underbrace{bb^{k-1-i}}_{\substack{=b^{\left(
k-1-i\right)  +1}=b^{k-i}\\\text{(since }\left(  k-1-i\right)  +1=k-i\text{)}%
}}\\
&  =\underbrace{\sum_{i=0}^{k-1}a^{i+1}b^{k-1-i}}_{\substack{=\sum_{i=1}%
^{k}a^{\left(  i-1\right)  +1}b^{k-1-\left(  i+1\right)  }\\\text{(here, we
have substituted }i-1\text{ for }i\text{ in the sum)}}}-\sum_{i=0}^{k-1}%
a^{i}b^{k-i}\\
&  =\sum_{i=1}^{k}\underbrace{a^{\left(  i-1\right)  +1}}_{\substack{=a^{i}%
\\\text{(since }\left(  i-1\right)  +1=i\text{)}}}\underbrace{b^{k-1-\left(
i+1\right)  }}_{\substack{=b^{k-i}\\\text{(since }k-1-\left(  i+1\right)
=k-i\text{)}}}-\sum_{i=0}^{k-1}a^{i}b^{k-i}\\
&  =\underbrace{\sum_{i=1}^{k}a^{i}b^{k-i}}_{\substack{=\sum_{i=0}^{k}%
a^{i}b^{k-i}-b^{k}\\\text{(by (\ref{pf.prop.binom3.1}))}}}-\underbrace{\sum
_{i=0}^{k-1}a^{i}b^{k-i}}_{\substack{=\sum_{i=0}^{k}a^{i}b^{k-i}%
-a^{k}\\\text{(by (\ref{pf.prop.binom3.2}))}}}\\
&  =\left(  \sum_{i=0}^{k}a^{i}b^{k-i}-b^{k}\right)  -\left(  \sum_{i=0}%
^{k}a^{i}b^{k-i}-a^{k}\right)  =a^{k}-b^{k}.
\end{align*}
This proves Proposition \ref{prop.binom3}.
\end{proof}

\begin{lemma}
\label{lem.A3.Xk}Let $k$ be a positive integer. Let $p$ be a nonzero integer.
Let $c$ and $l$ be any integers. Then,%
\[
\left(  cp+l\right)  ^{k}-l^{k}\equiv kcpl^{k-1}\operatorname{mod}p^{2}.
\]

\end{lemma}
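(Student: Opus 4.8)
The plan is to reduce everything to the algebraic factorization of $a^{k}-b^{k}$ supplied by Proposition \ref{prop.binom3}, and then to exploit the fact that a congruence modulo $p$ becomes a congruence modulo $p^{2}$ as soon as it is multiplied through by $p$ (Lemma \ref{lem.congruence-times-p}). This is exactly why Proposition \ref{prop.binom3} was proved just beforehand.

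First I would apply Proposition \ref{prop.binom3} with $a=cp+l$ and $b=l$. Since $a-b=cp$, this yields
\[
\left(  cp+l\right)  ^{k}-l^{k}=cp\sum_{i=0}^{k-1}\left(  cp+l\right)
^{i}l^{k-1-i}.
\]
Thus the problem is reduced to understanding the sum $S=\sum_{i=0}^{k-1}\left(  cp+l\right)  ^{i}l^{k-1-i}$ modulo $p$ only, since the prefactor $cp$ will promote any mod-$p$ statement about $S$ to a mod-$p^{2}$ statement about $cp\,S$.

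Next I would compute $S$ modulo $p$. Since $cp+l\equiv l\operatorname{mod}p$, each power satisfies $\left(  cp+l\right)  ^{i}\equiv l^{i}\operatorname{mod}p$, and therefore
\[
S\equiv\sum_{i=0}^{k-1}l^{i}l^{k-1-i}=\sum_{i=0}^{k-1}l^{k-1}=k\,l^{k-1}\operatorname{mod}p.
\]
Here $k-1\geq0$ because $k$ is a positive integer, so every exponent $k-1-i$ is a nonnegative integer and each summand is a genuine power of $l$; the final step just counts the $k$ equal summands.

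Finally I would promote this to the modulus $p^{2}$. By Lemma \ref{lem.congruence-times-p} (whose integers $a$, $b$, $c$ I take to be $S$, $k\,l^{k-1}$ and $p$ respectively), the congruence $S\equiv k\,l^{k-1}\operatorname{mod}p$ gives $p\,S\equiv p\,k\,l^{k-1}\operatorname{mod}p^{2}$, and multiplying through by the integer $c$ yields $cp\,S\equiv kcp\,l^{k-1}\operatorname{mod}p^{2}$. Combining this with the displayed factorization gives
\[
\left(  cp+l\right)  ^{k}-l^{k}=cp\,S\equiv kcp\,l^{k-1}\operatorname{mod}p^{2},
\]
which is precisely the claim. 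I expect no genuine obstacle; the only point demanding care is the passage from modulus $p$ to modulus $p^{2}$, and that is handled cleanly by Lemma \ref{lem.congruence-times-p}. (Alternatively, one could expand $\left(  cp+l\right)  ^{k}$ directly by the binomial theorem and discard every term $\dbinom{k}{i}\left(  cp\right)  ^{i}l^{k-i}$ with $i\geq2$, each of which is divisible by $p^{2}$; the surviving terms are $l^{k}$ and $kcp\,l^{k-1}$, giving the same conclusion without invoking Proposition \ref{prop.binom3}.)
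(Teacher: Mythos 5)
Your proposal is correct and follows essentially the same route as the paper's own proof: both factor $\left(cp+l\right)^{k}-l^{k}$ via Proposition \ref{prop.binom3} with $a=cp+l$ and $b=l$, reduce the resulting sum modulo $p$ to $kl^{k-1}$, and then promote the congruence to modulus $p^{2}$ using Lemma \ref{lem.congruence-times-p}. The binomial-theorem alternative you mention in passing is also sound (it is the device the paper uses in its short proof of Lemma \ref{lem.A3}), but your main argument matches the paper's proof step for step.
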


\begin{proof}
[Proof of Lemma \ref{lem.A3.Xk}.]Define an integer $a\in\mathbb{Z}$ by
$a=cp+l$. Define an integer $b\in\mathbb{Z}$ by $b=l$. Then,%
\[
\underbrace{a}_{=cp+l}-\underbrace{b}_{=l}=cp+l-l=cp\equiv0\operatorname{mod}%
p.
\]
In other words, $a\equiv b\operatorname{mod}p$. Now,%
\[
\sum_{i=0}^{k-1}\underbrace{a^{i}}_{\substack{\equiv b^{i}\operatorname{mod}%
p\\\text{(since }a\equiv b\operatorname{mod}p\text{)}}}b^{k-1-i}\equiv
\sum_{i=0}^{k-1}\underbrace{b^{i}b^{k-1-i}}_{\substack{=b^{i+\left(
k-1-i\right)  }=b^{k-1}\\\text{(since }i+\left(  k-1-i\right)  =k-1\text{)}%
}}=\sum_{i=0}^{k-1}b^{k-1}=kb^{k-1}\operatorname{mod}p.
\]
Hence, Lemma \ref{lem.congruence-times-p} (applied to $\sum_{i=0}^{k-1}%
a^{i}b^{k-1-i}$, $kb^{k-1}$ and $p$ instead of $a$, $b$ and $c$) yields%
\[
p\sum_{i=0}^{k-1}a^{i}b^{k-1-i}\equiv pkb^{k-1}\operatorname{mod}p^{2}.
\]

But Proposition \ref{prop.binom3} yields
\begin{align*}
a^{k}-b^{k}  &  =\underbrace{\left(  a-b\right)  }_{=cp}\sum_{i=0}^{k-1}%
a^{i}b^{k-1-i}=c\underbrace{p\sum_{i=0}^{k-1}a^{i}b^{k-1-i}}_{\equiv
pkb^{k-1}\operatorname{mod}p^{2}}\equiv cpkb^{k-1}\\
&  =kcpb^{k-1}\operatorname{mod}p^{2}.
\end{align*}
In view of $a=cp+l$ and $b=l$, this rewrites as $\left(  cp+l\right)
^{k}-l^{k}\equiv kcpl^{k-1}\operatorname{mod}p^{2}$. This proves Lemma
\ref{lem.A3.Xk}.
\end{proof}

\begin{lemma}
\label{lem.A3.sumXk}Let $p$ be a prime. Let $k\in\left\{  0,1,\ldots
,2p-2\right\}  $. Then,%
\[
\sum_{l=0}^{p-1}\left(  \left(  cp+l\right)  ^{k}-l^{k}\right)  \equiv
0\operatorname{mod}p^{2}.
\]

\end{lemma}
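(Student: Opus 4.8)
The plan is to reduce the claim to the power-sum congruence of Theorem \ref{thm.powersums.p} by way of the single-term estimate in Lemma \ref{lem.A3.Xk}. First I would dispose of the degenerate case $k=0$: here every summand equals $\left(cp+l\right)^{0}-l^{0}=1-1=0$, so the entire sum is $0$ and the congruence holds trivially. Hence I may assume from now on that $k\geq1$, i.e., that $k$ is a positive integer.

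For $k\geq1$, I would apply Lemma \ref{lem.A3.Xk} (with the same $c$, and with each $l$ in place of its variable) to rewrite each summand as $\left(cp+l\right)^{k}-l^{k}\equiv kcpl^{k-1}\operatorname{mod}p^{2}$. Summing these congruences over $l\in\left\{0,1,\ldots,p-1\right\}$ yields
\[
\sum_{l=0}^{p-1}\left(\left(cp+l\right)^{k}-l^{k}\right)\equiv kcp\sum_{l=0}^{p-1}l^{k-1}\operatorname{mod}p^{2}.
\]
So it suffices to show that the right-hand side vanishes modulo $p^{2}$, i.e., that $kcp\sum_{l=0}^{p-1}l^{k-1}\equiv0\operatorname{mod}p^{2}$.

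Next I would split into cases according to whether $k=p$. If $k=p$, then the prefactor $kcp=cp^{2}$ is already divisible by $p^{2}$, so the right-hand side vanishes and the claim follows immediately. If instead $k\neq p$ (with $k$ still in $\left\{1,2,\ldots,2p-2\right\}$), then I claim that $k-1$ is not a positive multiple of $p-1$. Indeed, $k-1\leq 2p-3<2\left(p-1\right)$, and the unique positive multiple of $p-1$ that is smaller than $2\left(p-1\right)$ is $p-1$ itself; hence the only way $k-1$ could be a positive multiple of $p-1$ would be $k-1=p-1$, i.e., $k=p$, which we have excluded. Thus Theorem \ref{thm.powersums.p} (applied to $k-1$ in place of $k$) gives $\sum_{l=0}^{p-1}l^{k-1}\equiv0\operatorname{mod}p$, so that $p\mid\sum_{l=0}^{p-1}l^{k-1}$ and therefore the product $kcp\sum_{l=0}^{p-1}l^{k-1}=kc\cdot\left(p\sum_{l=0}^{p-1}l^{k-1}\right)$ is divisible by $p^{2}$. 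This completes the argument.

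The only genuinely delicate point — and the single step that actually uses the hypothesis $k\leq 2p-2$ — is the case analysis pinning down exactly when $k-1$ is a positive multiple of $p-1$. The bound $k\leq 2p-2$ is precisely what keeps $k-1$ strictly below $2\left(p-1\right)$, so that $k=p$ is the unique exceptional value in the allowed range; and that exceptional value is harmless, since the extra factor of $p$ hidden in $kcp=cp^{2}$ supplies the missing power of $p$ all by itself. Everything else is bookkeeping built on Lemma \ref{lem.A3.Xk} and Theorem \ref{thm.powersums.p}.
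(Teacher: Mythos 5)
Your proof is correct and follows essentially the same route as the paper's own argument: dispose of $k=0$ trivially, reduce via Lemma \ref{lem.A3.Xk} to the sum $kcp\sum_{l=0}^{p-1}l^{k-1}$, absorb the exceptional case $k=p$ into the prefactor $cp^{2}$, and otherwise invoke Theorem \ref{thm.powersums.p} after checking that $k-1$ is not a positive multiple of $p-1$. No gaps to report.
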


\begin{proof}
[Proof of Lemma \ref{lem.A3.sumXk}.]If $k=0$, then Lemma \ref{lem.A3.sumXk}
holds\footnote{\textit{Proof.} Assume that $k=0$. We must now prove that Lemma
\ref{lem.A3.sumXk} holds.
\par
We have%
\[
\sum_{l=0}^{p-1}\underbrace{\left(  \left(  cp+l\right)  ^{k}-l^{k}\right)
}_{\substack{=\left(  cp+l\right)  ^{0}-l^{0}\\\text{(since }k=0\text{)}%
}}=\sum_{l=0}^{p-1}\left(  \underbrace{\left(  cp+l\right)  ^{0}}%
_{=1}-\underbrace{l^{0}}_{=1}\right)  =\sum_{l=0}^{p-1}\underbrace{\left(
1-1\right)  }_{=0}=\sum_{l=0}^{p-1}0=0\equiv0\operatorname{mod}p^{2}.
\]
Thus, Lemma \ref{lem.A3.sumXk} holds. Qed.}. Hence, for the rest of this
proof, we can WLOG assume that we don't have $k=0$. Assume this.

We have $k\neq0$ (since we don't have $k=0$). Combining this with
$k\in\left\{  0,1,\ldots,2p-2\right\}  $, we obtain $k\in\left\{
0,1,\ldots,2p-2\right\}  \setminus\left\{  0\right\}  =\left\{  1,2,\ldots
,2p-2\right\}  \subseteq\left\{  1,2,3,\ldots\right\}  $. Hence, $k$ is a
positive integer. Thus, $k-1\in\mathbb{N}$. Also, $k\leq2p-2$ (since
$k\in\left\{  0,1,\ldots,2p-2\right\}  $).

If $k=p$, then Lemma \ref{lem.A3.sumXk} holds\footnote{\textit{Proof.} Assume
that $k=p$. We must now prove that Lemma \ref{lem.A3.sumXk} holds.
\par
We have%
\begin{align*}
\sum_{l=0}^{p-1}\underbrace{\left(  \left(  cp+l\right)  ^{k}-l^{k}\right)
}_{\substack{\equiv kcpl^{k-1}\operatorname{mod}p^{2}\\\text{(by Lemma
\ref{lem.A3.Xk})}}}  &  \equiv\sum_{l=0}^{p-1}kcpl^{k-1}=\underbrace{k}%
_{=p}cp\sum_{l=0}^{p-1}l^{k-1}=pcp\sum_{l=0}^{p-1}l^{k-1}\\
&  =p^{2}c\sum_{l=0}^{p-1}l^{k-1}\equiv0\operatorname{mod}p^{2}.
\end{align*}
Thus, Lemma \ref{lem.A3.sumXk} holds. Qed.}. Hence, for the rest of this
proof, we can WLOG assume that we don't have $k=p$. Assume this.

We have $k\neq p$ (since we don't have $k=p$). Hence, $k-1$ is not a positive
multiple of $p-1$\ \ \ \ \footnote{\textit{Proof.} Assume the contrary. Thus,
$k-1$ is a positive multiple of $p-1$. In other words, $k-1$ is a positive
integer and a multiple of $p-1$.
\par
There exists some $g\in\mathbb{Z}$ such that $k-1=g\left(  p-1\right)  $
(since $k-1$ is a multiple of $p-1$). Consider this $g$.
\par
We have $p>1$ (since $p$ is a prime), thus $p-1>0$. Hence, $p-1\geq1$ (since
$p-1$ is an integer). But $k-1<k\leq2p-2=2\left(  p-1\right)  $.
\par
If we had $g\geq2$, then we would have $\underbrace{g}_{\geq2}\left(
p-1\right)  \geq2\left(  p-1\right)  $ (because $p-1>0$), which would
contradict $g\left(  p-1\right)  =k-1<2\left(  p-1\right)  $. Hence, we cannot
have $g\geq2$. We thus have $g<2$. Thus, $g\leq2-1$ (since $g$ is an integer).
In other words, $g\leq1$.
\par
But $k-1>0$ (since $k-1$ is a positive integer). Hence, $g\left(  p-1\right)
=k-1>0$. We can divide this inequality by $p-1$ (since $p-1>0$), and obtain
$g>0$. Hence, $g\geq1$ (since $g$ is an integer). Combined with $g\leq1$, this
yields $g=1$. Thus, $k-1=\underbrace{g}_{=1}\left(  p-1\right)  =p-1$, so that
$k=p$. This contradicts $k\neq p$.
\par
This contradiction proves that our assumption was wrong, qed.}.

Hence, Theorem \ref{thm.powersums.p} (applied to $k-1$ instead of $k$) yields
\[
\sum_{l=0}^{p-1}l^{k-1}\equiv0\operatorname{mod}p.
\]
Thus, Lemma \ref{lem.congruence-times-p} (applied to $a=\sum_{l=0}%
^{p-1}l^{k-1}$, $b=0$ and $c=p$) yields
\[
p\sum_{l=0}^{p-1}l^{k-1}\equiv p\cdot0=0\operatorname{mod}p^{2}.
\]

But
\[
\sum_{l=0}^{p-1}\underbrace{\left(  \left(  cp+l\right)  ^{k}-l^{k}\right)
}_{\substack{\equiv kcpl^{k-1}\operatorname{mod}p^{2}\\\text{(by Lemma
\ref{lem.A3.Xk})}}}\equiv\sum_{l=0}^{p-1}kcpl^{k-1}=kc\underbrace{p\sum
_{l=0}^{p-1}l^{k-1}}_{\equiv0\operatorname{mod}p^{2}}\equiv0\operatorname{mod}%
p^{2}.
\]
This proves Lemma \ref{lem.A3.sumXk}.
\end{proof}

\begin{proof}
[Proof of Lemma \ref{lem.A3}.]We know that $P$ is a polynomial of degree
$<2p-1$. Hence, $\deg P<2p-1$. Since $\deg P$ and $2p-1$ are integers (unless
$\deg P=-\infty$, in which case this is also clear), we thus have $\deg
P\leq\left(  2p-1\right)  -1=2p-2$. Thus, $P$ is a polynomial in
$\mathbb{Z}\left[  X\right]  $ having degree $\leq2p-2$. Hence, we can write
$P$ in the form $P=\sum_{k=0}^{2p-2}a_{k}X^{k}$ for some integers $a_{0}%
,a_{1},\ldots,a_{2p-2}$. Consider these integers $a_{0},a_{1},\ldots,a_{2p-2}$.

Now, for each $m\in\mathbb{Z}$, we have%
\begin{equation}
P\left(  m\right)  =\sum_{k=0}^{2p-2}a_{k}m^{k}. \label{pf.lem.A3.Pm=}%
\end{equation}
(Indeed, this follows by substituting $m$ for $X$ into the equality
$P=\sum_{k=0}^{2p-2}a_{k}X^{k}$.)

Now,%
\begin{align*}
&  \sum\limits_{l=0}^{p-1}\left(  \underbrace{P\left(  cp+l\right)
}_{\substack{=\sum_{k=0}^{2p-2}a_{k}\left(  cp+l\right)  ^{k}\\\text{(by
(\ref{pf.lem.A3.Pm=}) (applied to }m=cp+l\text{))}}}-\underbrace{P\left(
l\right)  }_{\substack{=\sum_{k=0}^{2p-2}a_{k}l^{k}\\\text{(by
(\ref{pf.lem.A3.Pm=}) (applied to }m=l\text{))}}}\right) \\
&  =\sum\limits_{l=0}^{p-1}\underbrace{\left(  \sum_{k=0}^{2p-2}a_{k}\left(
cp+l\right)  ^{k}-\sum_{k=0}^{2p-2}a_{k}l^{k}\right)  }_{=\sum_{k=0}%
^{2p-2}a_{k}\left(  \left(  cp+l\right)  ^{k}-l^{k}\right)  }=\underbrace{\sum
\limits_{l=0}^{p-1}\sum_{k=0}^{2p-2}}_{=\sum_{k=0}^{2p-2}\sum\limits_{l=0}%
^{p-1}}a_{k}\left(  \left(  cp+l\right)  ^{k}-l^{k}\right) \\
&  =\sum_{k=0}^{2p-2}\underbrace{\sum\limits_{l=0}^{p-1}a_{k}\left(  \left(
cp+l\right)  ^{k}-l^{k}\right)  }_{=a_{k}\sum\limits_{l=0}^{p-1}\left(
\left(  cp+l\right)  ^{k}-l^{k}\right)  }=\sum_{k=0}^{2p-2}a_{k}%
\underbrace{\sum\limits_{l=0}^{p-1}\left(  \left(  cp+l\right)  ^{k}%
-l^{k}\right)  }_{\substack{\equiv0\operatorname{mod}p^{2}\\\text{(by Lemma
\ref{lem.A3.sumXk})}}}\\
&  \equiv\sum_{k=0}^{2p-2}a_{k}0=0\operatorname{mod}p^{2}.
\end{align*}
This proves Lemma \ref{lem.A3}.
\end{proof}
\end{verlong}

\begin{lemma}
\label{lem.Aab}Let $p$, $a$ and $b$ be three integers such that $a-b$ is
divisible by $p$. Then, $a^{2}-b^{2}\equiv2\left(  a-b\right)
b\operatorname{mod}p^{2}$.
\end{lemma}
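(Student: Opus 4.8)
The plan is to reduce the congruence to the elementary observation that the square of a multiple of $p$ is a multiple of $p^2$. The key algebraic move is to factor $a^2 - b^2 = \left(a - b\right)\left(a + b\right)$ and then rewrite the second factor as $a + b = \left(a - b\right) + 2b$, which exposes the term $a - b$ a second time.

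First I would carry out this factorization and expansion to obtain the exact identity
\[
a^2 - b^2 = \left(a - b\right)^2 + 2\left(a - b\right)b,
\]
which holds in $\mathbb{Z}$ with no hypotheses at all. Next I would invoke the assumption that $a - b$ is divisible by $p$: writing $a - b = pm$ for some $m \in \mathbb{Z}$, we get $\left(a - b\right)^2 = p^2 m^2$, so that $\left(a - b\right)^2 \equiv 0 \operatorname{mod} p^2$. Substituting this into the displayed identity immediately yields $a^2 - b^2 \equiv 2\left(a - b\right)b \operatorname{mod} p^2$, which is the claim.

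There is no real obstacle here; the lemma is a one-line algebraic manipulation whose only substantive input is that $p \mid a - b$ forces $p^2 \mid \left(a - b\right)^2$. The only point requiring a sliver of care is to be sure the identity $a^2 - b^2 = \left(a - b\right)^2 + 2\left(a - b\right)b$ is verified as an exact equation of integers \emph{before} reducing modulo $p^2$, rather than performing the reduction prematurely; once this identity is in hand, the congruence follows by discarding the term $\left(a - b\right)^2$.
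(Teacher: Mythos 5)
Your proof is correct and is essentially the paper's own argument: both rest on the identity $a^{2}-b^{2}-2\left(a-b\right)b=\left(a-b\right)^{2}$ and the observation that $p\mid a-b$ forces $p^{2}\mid\left(a-b\right)^{2}$. No gaps; nothing to add.
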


\begin{vershort}
\begin{proof}
[Proof of Lemma \ref{lem.Aab}.]The difference $\left(  a^{2}-b^{2}\right)
-2\left(  a-b\right)  b=\left(  a-b\right)  ^{2}$ is divisible by $p^{2}$
(since $a-b$ is divisible by $p$). In other words, $a^{2}-b^{2}\equiv2\left(
a-b\right)  b\operatorname{mod}p^{2}$. Lemma \ref{lem.Aab} is proven.
\end{proof}
\end{vershort}

\begin{verlong}
\begin{proof}
[Proof of Lemma \ref{lem.Aab}.]There exists some $g\in\mathbb{Z}$ such that
$a-b=pg$ (since $a-b$ is divisible by $p$). Consider this $g$. Now,%
\[
\left(  a^{2}-b^{2}\right)  -2\left(  a-b\right)  b=\left(  \underbrace{a-b}%
_{=pg}\right)  ^{2}=\left(  pg\right)  ^{2}=p^{2}g^{2}\equiv
0\operatorname{mod}p^{2}.
\]
In other words, $a^{2}-b^{2}\equiv2\left(  a-b\right)  b\operatorname{mod}%
p^{2}$. This proves Lemma \ref{lem.Aab}.
\end{proof}
\end{verlong}

\begin{lemma}
\label{lem.A4}Let $p$ be an odd prime. Let $c\in\mathbb{Z}$. Let
$P\in\mathbb{Z}\left[  X\right]  $ be a polynomial of degree $\leq p-1$.
Then,
\[
\sum\limits_{l=0}^{p-1}\left(  P\left(  cp+l\right)  -P\left(  l\right)
\right)  P\left(  l\right)  \equiv0\operatorname{mod}p^{2}.
\]

\end{lemma}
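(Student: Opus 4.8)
The plan is to reduce the product $\left(P\left(cp+l\right)-P\left(l\right)\right)P\left(l\right)$ to a difference of squares, so that Lemma \ref{lem.A3} (which controls sums of the form $\sum_{l=0}^{p-1}\left(Q\left(cp+l\right)-Q\left(l\right)\right)$ for polynomials $Q$ of degree $<2p-1$) can be brought to bear on the polynomial $P^{2}$. The key observation is that $P^{2}$ has degree $\leq 2\left(p-1\right)=2p-2<2p-1$, so it is exactly of the type to which Lemma \ref{lem.A3} applies. The factor of $2$ produced by expanding the difference of squares will be harmless because $p$ is odd.

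Here are the steps in order. First I would note that for each $l\in\left\{0,1,\ldots,p-1\right\}$ the difference $P\left(cp+l\right)-P\left(l\right)$ is divisible by $p$: indeed $cp+l\equiv l\operatorname{mod}p$, and since $P$ has integer coefficients, this gives $P\left(cp+l\right)\equiv P\left(l\right)\operatorname{mod}p$. Next I would apply Lemma \ref{lem.Aab} with $a=P\left(cp+l\right)$ and $b=P\left(l\right)$ (whose difference $a-b$ is divisible by $p$ by the previous sentence) to obtain
\[
P\left(cp+l\right)^{2}-P\left(l\right)^{2}\equiv 2\left(P\left(cp+l\right)-P\left(l\right)\right)P\left(l\right)\operatorname{mod}p^{2}
\]
for each such $l$. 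Summing this congruence over $l\in\left\{0,1,\ldots,p-1\right\}$ yields
\[
2\sum_{l=0}^{p-1}\left(P\left(cp+l\right)-P\left(l\right)\right)P\left(l\right)\equiv\sum_{l=0}^{p-1}\left(P\left(cp+l\right)^{2}-P\left(l\right)^{2}\right)\operatorname{mod}p^{2}.
\]

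The right-hand side is precisely $\sum_{l=0}^{p-1}\left(P^{2}\left(cp+l\right)-P^{2}\left(l\right)\right)$, where $P^{2}\in\mathbb{Z}\left[X\right]$ has degree $\leq 2p-2<2p-1$. Hence Lemma \ref{lem.A3} (applied to the polynomial $P^{2}$) shows that this right-hand side is $\equiv 0\operatorname{mod}p^{2}$. Therefore $2\sum_{l=0}^{p-1}\left(P\left(cp+l\right)-P\left(l\right)\right)P\left(l\right)\equiv 0\operatorname{mod}p^{2}$. Finally, since $p$ is odd, the integer $2$ is coprime to $p$ and hence to $p^{2}$, so I may cancel the factor $2$ (for instance by the cancellation principle used elsewhere in the paper) to conclude $\sum_{l=0}^{p-1}\left(P\left(cp+l\right)-P\left(l\right)\right)P\left(l\right)\equiv 0\operatorname{mod}p^{2}$, as desired.

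I do not expect a genuine obstacle here: the proof is essentially a two-line combination of Lemma \ref{lem.Aab} and Lemma \ref{lem.A3}. The only point that requires any care is the degree bookkeeping — one must check that squaring $P$ keeps the degree strictly below $2p-1$, which is exactly where the hypothesis $\deg P\leq p-1$ is consumed — together with the observation that the hypothesis of $p$ being \emph{odd} is used solely to justify dividing the final congruence by $2$.
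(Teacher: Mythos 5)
Your proposal is correct and coincides with the paper's own proof: the same reduction via Lemma \ref{lem.Aab} to a difference of squares, the same application of Lemma \ref{lem.A3} to $P^{2}$ (using $\deg P^{2}\leq 2p-2<2p-1$), and the same cancellation of the factor $2$ justified by $p$ being odd. The only cosmetic difference is that you justify $p\mid P\left(cp+l\right)-P\left(l\right)$ by the congruence-preservation property of integer polynomials, while the paper invokes the equivalent fact that $u-v$ divides $P\left(u\right)-P\left(v\right)$.
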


\begin{vershort}
\begin{proof}
[Proof of Lemma \ref{lem.A4}.]Fix $l\in\mathbb{Z}$. We have $P\in
\mathbb{Z}\left[  X\right]  $. Thus, $P\left(  u\right)  -P\left(  v\right)  $
is divisible by $u-v$ whenever $u$ and $v$ are two integers\footnote{This is a
well-known fact. It can be proven as follows: WLOG assume that $P=X^{k}$ for
some $k\in\mathbb{N}$ (this is a valid assumption, since the claim is
$\mathbb{Z}$-linear in $P$); then, $P\left(  u\right)  -P\left(  v\right)
=u^{k}-v^{k}=\left(  u-v\right)  \sum_{i=0}^{k-1}u^{i}v^{k-i}$ is clearly
divisible by $u-v$.}. Applying this to $u=cp+l$ and $v=l$, we conclude that
$P\left(  cp+l\right)  -P\left(  l\right)  $ is divisible by $\left(
cp+l\right)  -l=cp$, and thus also divisible by $p$.

Hence, Lemma \ref{lem.Aab} (applied to $a=P\left(  cp+l\right)  $ and
$b=P\left(  l\right)  $) shows that%
\begin{equation}
\left(  P\left(  cp+l\right)  \right)  ^{2}-\left(  P\left(  l\right)
\right)  ^{2}\equiv2\left(  P\left(  cp+l\right)  -P\left(  l\right)  \right)
P\left(  l\right)  \operatorname{mod}p^{2}. \label{pf.lem.A4.first}%
\end{equation}

Now, forget that we fixed $l$. We thus have proven (\ref{pf.lem.A4.first}) for
each $l\in\mathbb{Z}$.

The polynomial $P$ has degree $\leq p-1$. Hence, the polynomial $P^{2}$ has
degree $\leq2\left(  p-1\right)  <2p-1$. Thus, Lemma \ref{lem.A3} (applied to
$P^{2}$ instead of $P$) shows that%
\[
\sum\limits_{l=0}^{p-1}\left(  P^{2}\left(  cp+l\right)  -P^{2}\left(
l\right)  \right)  \equiv0\operatorname{mod}p^{2}.
\]
Thus,%
\[
0\equiv\sum\limits_{l=0}^{p-1}\underbrace{\left(  P^{2}\left(  cp+l\right)
-P^{2}\left(  l\right)  \right)  }_{\substack{=\left(  P\left(  cp+l\right)
\right)  ^{2}-\left(  P\left(  l\right)  \right)  ^{2}\\\equiv2\left(
P\left(  cp+l\right)  -P\left(  l\right)  \right)  P\left(  l\right)
\operatorname{mod}p^{2}\\\text{(by (\ref{pf.lem.A4.first}))}}}\equiv
2\sum\limits_{l=0}^{p-1}\left(  P\left(  cp+l\right)  -P\left(  l\right)
\right)  P\left(  l\right)  \operatorname{mod}p^{2}.
\]
We can cancel $2$ from this congruence (since $p$ is odd), and conclude that
\[
0\equiv\sum\limits_{l=0}^{p-1}\left(  P\left(  cp+l\right)  -P\left(
l\right)  \right)  P\left(  l\right)  \operatorname{mod}p^{2}.
\]
This proves Lemma \ref{lem.A4}.
\end{proof}
\end{vershort}

\begin{verlong}
Before we prove Lemma \ref{lem.A4}, we need one further lemma:

\begin{lemma}
\label{lem.binom3cor}Let $P\in\mathbb{Z}\left[  X\right]  $. Let $u$ and $v$
be two integers. Then, the integer $P\left(  u\right)  -P\left(  v\right)  $
is divisible by $u-v$.
\end{lemma}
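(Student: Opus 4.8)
The plan is to reduce the claim to the case of monomials $P = X^{k}$ and then quote Proposition \ref{prop.binom3}.

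First I would note that the assertion depends $\mathbb{Z}$-linearly on $P$. Indeed, the integers divisible by the fixed integer $u-v$ form a subgroup of $\mathbb{Z}$ that is closed under scaling by integers, and for any $P_{1},P_{2}\in\mathbb{Z}\left[X\right]$ and any $c_{1},c_{2}\in\mathbb{Z}$ we have $\left(c_{1}P_{1}+c_{2}P_{2}\right)\left(u\right)-\left(c_{1}P_{1}+c_{2}P_{2}\right)\left(v\right)=c_{1}\left(P_{1}\left(u\right)-P_{1}\left(v\right)\right)+c_{2}\left(P_{2}\left(u\right)-P_{2}\left(v\right)\right)$. Hence, if the lemma holds for $P_{1}$ and for $P_{2}$, then it also holds for every $\mathbb{Z}$-linear combination of $P_{1}$ and $P_{2}$. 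Since every element of $\mathbb{Z}\left[X\right]$ is a $\mathbb{Z}$-linear combination of the monomials $X^{k}$ (for $k\in\mathbb{N}$), it therefore suffices to treat the case $P=X^{k}$ for some $k\in\mathbb{N}$.

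In that case, $P\left(u\right)-P\left(v\right)=u^{k}-v^{k}$. Now Proposition \ref{prop.binom3} (applied to $a=u$ and $b=v$) gives $u^{k}-v^{k}=\left(u-v\right)\sum_{i=0}^{k-1}u^{i}v^{k-1-i}$. The factor $\sum_{i=0}^{k-1}u^{i}v^{k-1-i}$ is an integer (being a sum of products of integers), so $u-v$ divides $u^{k}-v^{k}=P\left(u\right)-P\left(v\right)$. This is exactly the assertion of the lemma in the monomial case, and the reduction above then yields it for arbitrary $P$.

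I expect no genuine obstacle here: the whole content is the linearity reduction combined with Proposition \ref{prop.binom3}. The only points that deserve a sentence each are the justification that divisibility by $u-v$ is preserved under integer linear combinations, and the degenerate case $u=v$ (which is harmless, since then $u-v=0$ divides $P\left(u\right)-P\left(v\right)=0$).
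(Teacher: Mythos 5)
Your proof is correct and takes essentially the same route as the paper: both decompose $P$ into monomials with integer coefficients and invoke Proposition \ref{prop.binom3} to factor $u-v$ out of each $u^{k}-v^{k}$. The only difference is presentational -- you package the monomial decomposition as a $\mathbb{Z}$-linearity reduction, while the paper writes $P=\sum_{k=0}^{n}a_{k}X^{k}$ and exhibits the integer cofactor $g=\sum_{k=0}^{n}a_{k}\sum_{i=0}^{k-1}u^{i}v^{k-1-i}$ explicitly.
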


\begin{proof}
[Proof of Lemma \ref{lem.binom3cor}.]We know that $P\in\mathbb{Z}\left[
X\right]  $. In other words, $P$ is a polynomial with integer coefficients.
Thus, we can write $P$ in the form $P=\sum_{k=0}^{n}a_{k}X^{k}$ for some
$n\in\mathbb{N}$ and some integers $a_{0},a_{1},\ldots,a_{n}$. Consider this
$n$ and these $a_{0},a_{1},\ldots,a_{n}$.

We have $n\in\mathbb{N}$ and thus $0\in\left\{  0,1,\ldots,n\right\}  $.

Substituting $u$ for $X$ in the equality $P=\sum_{k=0}^{n}a_{k}X^{k}$, we find
$P\left(  u\right)  =\sum_{k=0}^{n}a_{k}u^{k}$.

Substituting $v$ for $X$ in the equality $P=\sum_{k=0}^{n}a_{k}X^{k}$, we find
$P\left(  v\right)  =\sum_{k=0}^{n}a_{k}v^{k}$.

Now,%
\begin{align}
&  \underbrace{P\left(  u\right)  }_{=\sum_{k=0}^{n}a_{k}u^{k}}%
-\underbrace{P\left(  v\right)  }_{=\sum_{k=0}^{n}a_{k}v^{k}}\nonumber\\
&  =\sum_{k=0}^{n}a_{k}u^{k}-\sum_{k=0}^{n}a_{k}v^{k}=\sum_{k=0}^{n}%
a_{k}\underbrace{\left(  u^{k}-v^{k}\right)  }_{\substack{=\left(  u-v\right)
\sum_{i=0}^{k-1}u^{i}v^{k-1-i}\\\text{(by Proposition \ref{prop.binom3}%
}\\\text{(applied to }a=u\text{ and }b=v\text{))}}}\nonumber\\
&  =\sum_{k=0}^{n}a_{k}\left(  u-v\right)  \sum_{i=0}^{k-1}u^{i}%
v^{k-1-i}=\left(  u-v\right)  \sum_{k=0}^{n}a_{k}\sum_{i=0}^{k-1}%
u^{i}v^{k-1-i}. \label{pf.lem.binom3cor.1}%
\end{align}
Now, define an integer $g\in\mathbb{Z}$ by $g=\sum_{k=0}^{n}a_{k}\sum
_{i=0}^{k-1}u^{i}v^{k-1-i}$. Then, (\ref{pf.lem.binom3cor.1}) becomes%
\[
P\left(  u\right)  -P\left(  v\right)  =\left(  u-v\right)  \underbrace{\sum
_{k=0}^{n}a_{k}\sum_{i=0}^{k-1}u^{i}v^{k-1-i}}_{=g}=\left(  u-v\right)  g.
\]
Hence, $P\left(  u\right)  -P\left(  v\right)  $ is divisible by $u-v$. This
proves Lemma \ref{lem.binom3cor}.
\end{proof}

\begin{proof}
[Proof of Lemma \ref{lem.A4}.]We have assumed that $p$ is odd. Thus, $p^{2}$
is odd (since the square of any odd integer is odd). In other words, $p^{2}$
is coprime to $2$. In other words, $2$ is coprime to $p^{2}$.

Fix $l\in\mathbb{Z}$. Applying Lemma \ref{lem.binom3cor} to $u=cp+l$ and
$v=l$, we conclude that $P\left(  cp+l\right)  -P\left(  l\right)  $ is
divisible by $\left(  cp+l\right)  -l$. In other words, $\left(  cp+l\right)
-l\mid P\left(  cp+l\right)  -P\left(  l\right)  $. In view of $\left(
cp+l\right)  -l=cp$, this rewrites as $cp\mid P\left(  cp+l\right)  -P\left(
l\right)  $. Now, $p\mid cp\mid P\left(  cp+l\right)  -P\left(  l\right)  $.
In other words, $P\left(  cp+l\right)  -P\left(  l\right)  $ is divisible by
$p$.

Hence, Lemma \ref{lem.Aab} (applied to $a=P\left(  cp+l\right)  $ and
$b=P\left(  l\right)  $) shows that%
\begin{equation}
\left(  P\left(  cp+l\right)  \right)  ^{2}-\left(  P\left(  l\right)
\right)  ^{2}\equiv2\left(  P\left(  cp+l\right)  -P\left(  l\right)  \right)
P\left(  l\right)  \operatorname{mod}p^{2}. \label{pf.lem.A4.long.first}%
\end{equation}

Now, forget that we fixed $l$. We thus have proven (\ref{pf.lem.A4.long.first}%
) for each $l\in\mathbb{Z}$.

The polynomial $P$ has degree $\leq p-1$. In other words, $\deg P\leq p-1$.
Hence,%
\begin{align*}
\deg\left(  \underbrace{P^{2}}_{=PP}\right)   &  =\deg\left(  PP\right)  =\deg
P+\deg P\\
&  =2\underbrace{\deg P}_{\leq p-1}\leq2\left(  p-1\right)  =2p-2<2p-1.
\end{align*}
In other words, the polynomial $P^{2}$ has degree $<2p-1$. Thus, Lemma
\ref{lem.A3} (applied to $P^{2}$ instead of $P$) shows that%
\[
\sum\limits_{l=0}^{p-1}\left(  P^{2}\left(  cp+l\right)  -P^{2}\left(
l\right)  \right)  \equiv0\operatorname{mod}p^{2}.
\]
Thus,%
\begin{align*}
0  &  \equiv\sum\limits_{l=0}^{p-1}\left(  \underbrace{P^{2}\left(
cp+l\right)  }_{=\left(  P\left(  cp+l\right)  \right)  ^{2}}%
-\underbrace{P^{2}\left(  l\right)  }_{=\left(  P\left(  l\right)  \right)
^{2}}\right)  =\sum\limits_{l=0}^{p-1}\underbrace{\left(  \left(  P\left(
cp+l\right)  \right)  ^{2}-\left(  P\left(  l\right)  \right)  ^{2}\right)
}_{\substack{\equiv2\left(  P\left(  cp+l\right)  -P\left(  l\right)  \right)
P\left(  l\right)  \operatorname{mod}p^{2}\\\text{(by
(\ref{pf.lem.A4.long.first}))}}}\\
&  \equiv\sum\limits_{l=0}^{p-1}2\left(  P\left(  cp+l\right)  -P\left(
l\right)  \right)  P\left(  l\right)  =2\sum\limits_{l=0}^{p-1}\left(
P\left(  cp+l\right)  -P\left(  l\right)  \right)  P\left(  l\right)
\operatorname{mod}p^{2}.
\end{align*}
Hence,%
\[
2\sum\limits_{l=0}^{p-1}\left(  P\left(  cp+l\right)  -P\left(  l\right)
\right)  P\left(  l\right)  \equiv0=2\cdot0\operatorname{mod}p^{2}.
\]
Hence, Lemma \ref{lem.2.1.p44.lem2} (applied to $2$, $p^{2}$, $\sum
\limits_{l=0}^{p-1}\left(  P\left(  cp+l\right)  -P\left(  l\right)  \right)
P\left(  l\right)  $ and $0$ instead of $b$, $c$, $a$ and $a^{\prime}$) shows
that%
\[
\sum\limits_{l=0}^{p-1}\left(  P\left(  cp+l\right)  -P\left(  l\right)
\right)  P\left(  l\right)  \equiv0\operatorname{mod}p^{2}%
\]
(since $2$ is coprime to $p^{2}$). This proves Lemma \ref{lem.A4}.
\end{proof}
\end{verlong}

\subsection{Proving Theorem \ref{thm.Z1}}

Now, let us prepare for the proofs of our results by showing several lemmas.

\begin{lemma}
\label{lem.A5}Let $p$ be an odd prime. Let $c\in\mathbb{Z}$. Let $k\in\left\{
0,1,\ldots,p-1\right\}  $. Then,
\[
\sum_{l=0}^{p-1}\left(  \dbinom{cp+l}{k}-\dbinom{l}{k}\right)  \dbinom{l}%
{k}\equiv0\operatorname{mod}p^{2}.
\]

\end{lemma}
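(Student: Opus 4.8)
The plan is to reduce Lemma \ref{lem.A5} to Lemma \ref{lem.A4}. The natural candidate polynomial is $\dbinom{X}{k}$, but this is \emph{not} a polynomial with integer coefficients, so Lemma \ref{lem.A4} (whose hypothesis demands $P\in\mathbb{Z}\left[X\right]$) cannot be applied to it directly. The fix is to clear the denominator. I would set
\[
Q=X\left(X-1\right)\cdots\left(X-k+1\right)\in\mathbb{Z}\left[X\right],
\]
which is monic of degree $k\leq p-1$, and observe from Definition \ref{def.binom} that $Q\left(m\right)=k!\dbinom{m}{k}$ for every integer $m$. Thus $Q$ is a genuine integer polynomial to which Lemma \ref{lem.A4} applies, and it encodes $\dbinom{X}{k}$ up to the scalar $k!$.

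Next I would apply Lemma \ref{lem.A4} to $P=Q$ (legitimate since $Q\in\mathbb{Z}\left[X\right]$ and $\deg Q=k\leq p-1$) to obtain
\[
\sum_{l=0}^{p-1}\left(Q\left(cp+l\right)-Q\left(l\right)\right)Q\left(l\right)\equiv0\operatorname{mod}p^{2}.
\]
Substituting $Q\left(m\right)=k!\dbinom{m}{k}$ for each occurrence turns the left-hand side into $\left(k!\right)^{2}$ times the sum we care about, so that
\[
\left(k!\right)^{2}\sum_{l=0}^{p-1}\left(\dbinom{cp+l}{k}-\dbinom{l}{k}\right)\dbinom{l}{k}\equiv0\operatorname{mod}p^{2}.
\]
(Here the inner sum is an integer by Proposition \ref{prop.binom.int}, so this is an honest divisibility statement.)

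Finally I would cancel the factor $\left(k!\right)^{2}$. Since $k\in\left\{0,1,\ldots,p-1\right\}$, every factor appearing in $k!$ lies in $\left\{1,\ldots,p-1\right\}$ and is therefore coprime to the prime $p$; hence $k!$, and with it $\left(k!\right)^{2}$, is coprime to $p$, and thus also to $p^{2}$. Cancelling $\left(k!\right)^{2}$ from the congruence modulo $p^{2}$ (exactly as the factor $2$ is cancelled in the proof of Lemma \ref{lem.A4}) yields
\[
\sum_{l=0}^{p-1}\left(\dbinom{cp+l}{k}-\dbinom{l}{k}\right)\dbinom{l}{k}\equiv0\operatorname{mod}p^{2},
\]
which is the claim. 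The only real subtlety is the clearing-of-denominators device; once the integer polynomial $Q=k!\dbinom{X}{k}$ is introduced and the coprimality of $\left(k!\right)^{2}$ to $p^{2}$ is noted, the remainder is a direct rewriting and a single cancellation.
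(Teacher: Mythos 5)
Your proposal is correct and is essentially identical to the paper's own proof: the paper likewise defines $P=X\left(X-1\right)\cdots\left(X-k+1\right)=k!\dbinom{X}{k}$, applies Lemma \ref{lem.A4} to it, and cancels $k!^{2}$ (coprime to $p^{2}$ since $k\leq p-1$) from the resulting congruence. No gaps; the argument matches the paper step for step.
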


\begin{vershort}
\begin{proof}
[Proof of Lemma \ref{lem.A5}.]Notice that $k!$ is coprime to $p$ (since $k\leq
p-1$), and thus $k!^{2}$ is coprime to $p^{2}$.

Define a polynomial $P\in\mathbb{Z}\left[  X\right]  $ by
\[
P=X\left(  X-1\right)  \cdots\left(  X-k+1\right)  .
\]
Then, $P$ has degree $k\leq p-1$. Thus, Lemma \ref{lem.A4} yields%
\[
\sum\limits_{l=0}^{p-1}\left(  P\left(  cp+l\right)  -P\left(  l\right)
\right)  P\left(  l\right)  \equiv0\operatorname{mod}p^{2}.
\]
Since each $n\in\mathbb{Z}$ satisfies $P\left(  n\right)  =n\left(
n-1\right)  \cdots\left(  n-k+1\right)  =k!\dbinom{n}{k}$, this rewrites as
\[
\sum\limits_{l=0}^{p-1}\left(  k!\dbinom{cp+l}{k}-k!\dbinom{l}{k}\right)
k!\dbinom{l}{k}\equiv0\operatorname{mod}p^{2}.
\]
We can cancel $k!^{2}$ from this congruence (since $k!^{2}$ is coprime to
$p^{2}$), and thus obtain%
\[
\sum_{l=0}^{p-1}\left(  \dbinom{cp+l}{k}-\dbinom{l}{k}\right)  \dbinom{l}%
{k}\equiv0\operatorname{mod}p^{2}.
\]
This proves Lemma \ref{lem.A5}.
\end{proof}
\end{vershort}

\begin{verlong}
\begin{proof}
[Proof of Lemma \ref{lem.A5}.]Lemma \ref{lem.2.1.p44.lem1} shows that $k!$ is
coprime to $p$. Hence, Lemma \ref{lem.2.1.p44.lem1p2} (applied to $i=k!$)
shows that $k!$ is coprime to $p^{2}$.

Define a polynomial $P\in\mathbb{Z}\left[  X\right]  $ by
\[
P=X\left(  X-1\right)  \cdots\left(  X-k+1\right)  .
\]
Then,
\[
\deg\underbrace{P}_{\substack{=X\left(  X-1\right)  \cdots\left(
X-k+1\right)  \\=\prod_{j=0}^{k-1}\left(  X-j\right)  }}=\deg\left(
\prod_{j=0}^{k-1}\left(  X-j\right)  \right)  =\sum_{j=0}^{k-1}%
\underbrace{\deg\left(  X-j\right)  }_{=1}=\sum_{j=0}^{k-1}1=k\leq p-1
\]
(since $k\in\left\{  0,1,\ldots,p-1\right\}  $). In other words, the
polynomial $P$ has degree $\leq p-1$. Thus, Lemma \ref{lem.A4} yields%
\begin{equation}
\sum\limits_{l=0}^{p-1}\left(  P\left(  cp+l\right)  -P\left(  l\right)
\right)  P\left(  l\right)  \equiv0\operatorname{mod}p^{2}.
\label{pf.lem.A5.long.1}%
\end{equation}
But each $n\in\mathbb{Z}$ satisfies
\begin{equation}
P\left(  n\right)  =k!\dbinom{n}{k} \label{pf.lem.A5.long.2}%
\end{equation}
\footnote{\textit{Proof of (\ref{pf.lem.A5.long.2}):} Let $n\in\mathbb{Z}$.
Recall that $P=X\left(  X-1\right)  \cdots\left(  X-k+1\right)  $.
Substituting $n$ for $X$ in this equality, we obtain $P\left(  n\right)
=n\left(  n-1\right)  \cdots\left(  n-k+1\right)  $.
\par
But $k\in\left\{  0,1,\ldots,p-1\right\}  \subseteq\mathbb{N}$. Hence, the
definition of the binomial coefficient $\dbinom{n}{k}$ shows that $\dbinom
{n}{k}=\dfrac{n\left(  n-1\right)  \cdots\left(  n-k+1\right)  }{k!}$. Thus,
$k!\dbinom{n}{k}=n\left(  n-1\right)  \cdots\left(  n-k+1\right)  $. Comparing
this with $P\left(  n\right)  =n\left(  n-1\right)  \cdots\left(
n-k+1\right)  $, we obtain $P\left(  n\right)  =k!\dbinom{n}{k}$. This proves
(\ref{pf.lem.A5.long.2}).}. Hence,%
\begin{align*}
&  \sum\limits_{l=0}^{p-1}\left(  \underbrace{P\left(  cp+l\right)
}_{\substack{=k!\dbinom{cp+l}{k}\\\text{(by (\ref{pf.lem.A5.long.2}%
)}\\\text{(applied to }n=cp+l\text{))}}}-\underbrace{P\left(  l\right)
}_{\substack{=k!\dbinom{l}{k}\\\text{(by (\ref{pf.lem.A5.long.2}%
)}\\\text{(applied to }n=l\text{))}}}\right)  \underbrace{P\left(  l\right)
}_{\substack{=k!\dbinom{l}{k}\\\text{(by (\ref{pf.lem.A5.long.2}%
)}\\\text{(applied to }n=l\text{))}}}\\
&  =\sum\limits_{l=0}^{p-1}\underbrace{\left(  k!\dbinom{cp+l}{k}-k!\dbinom
{l}{k}\right)  }_{=k!\left(  \dbinom{cp+l}{k}-\dbinom{l}{k}\right)  }%
k!\dbinom{l}{k}=\sum\limits_{l=0}^{p-1}k!\left(  \dbinom{cp+l}{k}-\dbinom
{l}{k}\right)  k!\dbinom{l}{k}\\
&  =k!k!\sum\limits_{l=0}^{p-1}\left(  \dbinom{cp+l}{k}-\dbinom{l}{k}\right)
\dbinom{l}{k}.
\end{align*}
Therefore,%
\begin{align*}
&  k!k!\sum\limits_{l=0}^{p-1}\left(  \dbinom{cp+l}{k}-\dbinom{l}{k}\right)
\dbinom{l}{k}\\
&  =\sum\limits_{l=0}^{p-1}\left(  P\left(  cp+l\right)  -P\left(  l\right)
\right)  P\left(  l\right)  \equiv0\\
&  =k!k!0\operatorname{mod}p^{2}\ \ \ \ \ \ \ \ \ \ \left(  \text{by
(\ref{pf.lem.A5.long.1})}\right)  .
\end{align*}
Hence, Lemma \ref{lem.2.1.p44.lem2} (applied to $k!$, $p^{2}$, $k!\sum
\limits_{l=0}^{p-1}\left(  \dbinom{cp+l}{k}-\dbinom{l}{k}\right)  \dbinom
{l}{k}$ and $k!0$ instead of $b$, $c$, $a$ and $a^{\prime}$) shows that%
\[
k!\sum\limits_{l=0}^{p-1}\left(  \dbinom{cp+l}{k}-\dbinom{l}{k}\right)
\dbinom{l}{k}\equiv k!0\operatorname{mod}p^{2}.
\]
Thus, Lemma \ref{lem.2.1.p44.lem2} (applied to $k!$, $p^{2}$, $\sum
\limits_{l=0}^{p-1}\left(  \dbinom{cp+l}{k}-\dbinom{l}{k}\right)  \dbinom
{l}{k}$ and $0$ instead of $b$, $c$, $a$ and $a^{\prime}$) shows that%
\[
\sum_{l=0}^{p-1}\left(  \dbinom{cp+l}{k}-\dbinom{l}{k}\right)  \dbinom{l}%
{k}\equiv0\operatorname{mod}p^{2}.
\]
This proves Lemma \ref{lem.A5}.
\end{proof}
\end{verlong}

\begin{lemma}
\label{lem.A6}Let $p$ be an odd prime. Let $c\in\mathbb{Z}$. Then,%
\[
\sum_{l=0}^{p-1}\left(  \dbinom{cp+2l}{l}-\dbinom{2l}{l}\right)
\equiv0\operatorname{mod}p^{2}.
\]

\end{lemma}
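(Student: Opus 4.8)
The plan is to reduce Lemma \ref{lem.A6} to Lemma \ref{lem.A5} by expanding both central binomial coefficients via Lemma \ref{lem.vandermonde.3}. Each $l$ occurring in the sum satisfies $l\in\left\{0,1,\ldots,p-1\right\}$, so Lemma \ref{lem.vandermonde.3} applies directly and gives $\dbinom{cp+2l}{l}=\sum_{k=0}^{p-1}\dbinom{cp+l}{k}\dbinom{l}{k}$. Applying the same lemma with $c$ replaced by $0$ yields $\dbinom{2l}{l}=\sum_{k=0}^{p-1}\dbinom{l}{k}\dbinom{l}{k}$ (here the term $\dbinom{0\cdot p+l}{k}$ simplifies to $\dbinom{l}{k}$). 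Subtracting these two identities, I would obtain, for each such $l$, the single-sum expression $\dbinom{cp+2l}{l}-\dbinom{2l}{l}=\sum_{k=0}^{p-1}\left(\dbinom{cp+l}{k}-\dbinom{l}{k}\right)\dbinom{l}{k}$.

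Next I would sum this equality over $l$ from $0$ to $p-1$ and interchange the two finite summations, writing $\sum_{l=0}^{p-1}\left(\dbinom{cp+2l}{l}-\dbinom{2l}{l}\right)=\sum_{k=0}^{p-1}\sum_{l=0}^{p-1}\left(\dbinom{cp+l}{k}-\dbinom{l}{k}\right)\dbinom{l}{k}$. At this point each inner sum over $l$ is exactly the left-hand side appearing in Lemma \ref{lem.A5} (whose hypotheses $k\in\left\{0,1,\ldots,p-1\right\}$ and $p$ odd are satisfied for every summand), so it is $\equiv0\operatorname{mod}p^{2}$. The outer sum is therefore a finite sum of multiples of $p^{2}$, hence itself $\equiv0\operatorname{mod}p^{2}$, which is the assertion of Lemma \ref{lem.A6}.

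There is essentially no serious obstacle in this argument: Lemma \ref{lem.A5} carries the entire analytic weight, being the place where the degree bound $k\leq p-1$ and the oddness of $p$ are genuinely exploited, while the present lemma amounts to bookkeeping that assembles those congruences through two applications of the Vandermonde-type identity in Lemma \ref{lem.vandermonde.3}. The only points requiring a small amount of care are verifying that $l$ indeed ranges over $\left\{0,1,\ldots,p-1\right\}$ so that Lemma \ref{lem.vandermonde.3} is applicable to every term, and confirming that its $c=0$ specialization legitimately reproduces $\dbinom{2l}{l}$; both are immediate.
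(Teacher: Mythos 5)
Your proposal is correct and follows exactly the paper's own proof: both expand $\dbinom{cp+2l}{l}$ and $\dbinom{2l}{l}$ via Lemma \ref{lem.vandermonde.3} (the latter with $c=0$), subtract, sum over $l$, swap the two summations, and invoke Lemma \ref{lem.A5} for each inner sum. No gaps; the argument is sound as written.
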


\begin{vershort}
\begin{proof}
[Proof of Lemma \ref{lem.A6}.]For each $l\in\left\{  0,1,\ldots,p-1\right\}
$, we have%
\begin{align*}
&  \underbrace{\dbinom{cp+2l}{l}}_{\substack{=\sum\limits_{k=0}^{p-1}%
\dbinom{cp+l}{k}\dbinom{l}{k}\\\text{(by Lemma \ref{lem.vandermonde.3})}%
}}-\underbrace{\dbinom{2l}{l}}_{\substack{=\sum\limits_{k=0}^{p-1}\dbinom
{l}{k}\dbinom{l}{k}\\\text{(by Lemma \ref{lem.vandermonde.3},}\\\text{applied
to }0\text{ instead of }c\text{)}}}\\
&  =\sum\limits_{k=0}^{p-1}\dbinom{cp+l}{k}\dbinom{l}{k}-\sum\limits_{k=0}%
^{p-1}\dbinom{l}{k}\dbinom{l}{k}=\sum\limits_{k=0}^{p-1}\left(  \dbinom
{cp+l}{k}-\dbinom{l}{k}\right)  \dbinom{l}{k}.
\end{align*}
Summing these equalities over all $l\in\left\{  0,1,\ldots,p-1\right\}  $, we
find%
\begin{align*}
\sum_{l=0}^{p-1}\left(  \dbinom{cp+2l}{l}-\dbinom{2l}{l}\right)   &
=\sum_{l=0}^{p-1}\sum\limits_{k=0}^{p-1}\left(  \dbinom{cp+l}{k}-\dbinom{l}%
{k}\right)  \dbinom{l}{k}\\
&  =\sum\limits_{k=0}^{p-1}\underbrace{\sum_{l=0}^{p-1}\left(  \dbinom
{cp+l}{k}-\dbinom{l}{k}\right)  \dbinom{l}{k}}_{\substack{\equiv
0\operatorname{mod}p^{2}\\\text{(by Lemma \ref{lem.A5})}}}\equiv\sum
_{k=0}^{p-1}0=0\operatorname{mod}p^{2}.
\end{align*}
This proves Lemma \ref{lem.A6}.
\end{proof}
\end{vershort}

\begin{verlong}
\begin{proof}
[Proof of Lemma \ref{lem.A6}.]Lemma \ref{lem.vandermonde.3} (applied to $0$
instead of $c$) yields%
\[
\dbinom{0p+2l}{l}=\sum\limits_{k=0}^{p-1}\underbrace{\dbinom{0p+l}{k}%
}_{\substack{=\dbinom{l}{k}\\\text{(since }0p+l=l\text{)}}}\dbinom{l}{k}%
=\sum\limits_{k=0}^{p-1}\dbinom{l}{k}\dbinom{l}{k}.
\]
In view of $0p+2l=2l$, this rewrites as
\begin{equation}
\dbinom{2l}{l}=\sum\limits_{k=0}^{p-1}\dbinom{l}{k}\dbinom{l}{k}.
\label{pf.lem.A6.long.1}%
\end{equation}

We have%
\begin{align*}
&  \sum_{l=0}^{p-1}\left(  \underbrace{\dbinom{cp+2l}{l}}_{\substack{=\sum
\limits_{k=0}^{p-1}\dbinom{cp+l}{k}\dbinom{l}{k}\\\text{(by Lemma
\ref{lem.vandermonde.3})}}}-\underbrace{\dbinom{2l}{l}}_{\substack{=\sum
\limits_{k=0}^{p-1}\dbinom{l}{k}\dbinom{l}{k}\\\text{(by
(\ref{pf.lem.A6.long.1}))}}}\right) \\
&  =\sum_{l=0}^{p-1}\underbrace{\left(  \sum\limits_{k=0}^{p-1}\dbinom
{cp+l}{k}\dbinom{l}{k}-\sum\limits_{k=0}^{p-1}\dbinom{l}{k}\dbinom{l}%
{k}\right)  }_{=\sum\limits_{k=0}^{p-1}\left(  \dbinom{cp+l}{k}-\dbinom{l}%
{k}\right)  \dbinom{l}{k}}\\
&  =\underbrace{\sum_{l=0}^{p-1}\sum\limits_{k=0}^{p-1}}_{=\sum\limits_{k=0}%
^{p-1}\sum_{l=0}^{p-1}}\left(  \dbinom{cp+l}{k}-\dbinom{l}{k}\right)
\dbinom{l}{k}\\
&  =\sum\limits_{k=0}^{p-1}\underbrace{\sum_{l=0}^{p-1}\left(  \dbinom
{cp+l}{k}-\dbinom{l}{k}\right)  \dbinom{l}{k}}_{\substack{\equiv
0\operatorname{mod}p^{2}\\\text{(by Lemma \ref{lem.A5})}}}\equiv\sum
_{k=0}^{p-1}0=0\operatorname{mod}p^{2}.
\end{align*}
This proves Lemma \ref{lem.A6}.
\end{proof}
\end{verlong}

\begin{vershort}
\begin{proof}
[Proof of Theorem \ref{thm.Z1}.]Lemma \ref{lem.A6} (applied to $c=-1$) yields%
\[
\sum_{l=0}^{p-1}\left(  \dbinom{-p+2l}{l}-\dbinom{2l}{l}\right)
\equiv0\operatorname{mod}p^{2}.
\]
Thus,%
\[
0\equiv\sum_{l=0}^{p-1}\left(  \dbinom{-p+2l}{l}-\dbinom{2l}{l}\right)
=\sum_{l=0}^{p-1}\dbinom{-p+2l}{l}-\sum_{l=0}^{p-1}\dbinom{2l}{l}%
\operatorname{mod}p^{2},
\]
so that%
\begin{equation}
\sum_{l=0}^{p-1}\dbinom{2l}{l}\equiv\sum_{l=0}^{p-1}\dbinom{-p+2l}%
{l}\operatorname{mod}p^{2}. \label{pf.thm.Z1.1}%
\end{equation}
Now,%
\begin{align*}
\sum_{n=0}^{p-1}\dbinom{2n}{n}  &  =\sum_{l=0}^{p-1}\dbinom{2l}{l}\equiv
\sum_{l=0}^{p-1}\underbrace{\dbinom{-p+2l}{l}}_{\substack{=\left(  -1\right)
^{l}\dbinom{l-\left(  -p+2l\right)  -1}{l}\\\text{(by Proposition
\ref{prop.binom.upper-neg})}}}\ \ \ \ \ \ \ \ \ \ \left(  \text{by
(\ref{pf.thm.Z1.1})}\right) \\
&  =\sum_{l=0}^{p-1}\left(  -1\right)  ^{l}\underbrace{\dbinom{l-\left(
-p+2l\right)  -1}{l}}_{=\dbinom{p-1-l}{l}}=\sum_{l=0}^{p-1}\left(  -1\right)
^{l}\dbinom{p-1-l}{l}\\
&  =\sum_{i=0}^{p-1}\left(  -1\right)  ^{i}\dbinom{p-1-i}{i}%
=\underbrace{\left(  -1\right)  ^{p}}_{\substack{=-1\\\text{(since }p\text{ is
odd)}}}\cdot%
\begin{cases}
0, & \text{if }p\equiv0\operatorname{mod}3;\\
-1, & \text{if }p\equiv1\operatorname{mod}3;\\
1, & \text{if }p\equiv2\operatorname{mod}3
\end{cases}
\\
&  \ \ \ \ \ \ \ \ \ \ \left(  \text{by Corollary \ref{cor.ps2.2.S.-1},
applied to }n=p\right) \\
&  =-%
\begin{cases}
0, & \text{if }p\equiv0\operatorname{mod}3;\\
-1, & \text{if }p\equiv1\operatorname{mod}3;\\
1, & \text{if }p\equiv2\operatorname{mod}3
\end{cases}
=%
\begin{cases}
0, & \text{if }p\equiv0\operatorname{mod}3;\\
1, & \text{if }p\equiv1\operatorname{mod}3;\\
-1, & \text{if }p\equiv2\operatorname{mod}3
\end{cases}
=\eta_{p}\operatorname{mod}p^{2}.
\end{align*}
This proves Theorem \ref{thm.Z1}.
\end{proof}
\end{vershort}

\begin{verlong}
\begin{proof}
[Proof of Theorem \ref{thm.Z1}.]Lemma \ref{lem.A6} (applied to $c=-1$) yields%
\[
\sum_{l=0}^{p-1}\left(  \dbinom{\left(  -1\right)  p+2l}{l}-\dbinom{2l}%
{l}\right)  \equiv0\operatorname{mod}p^{2}.
\]
In view of $\left(  -1\right)  p=-p$, this rewrites as%
\[
\sum_{l=0}^{p-1}\left(  \dbinom{-p+2l}{l}-\dbinom{2l}{l}\right)
\equiv0\operatorname{mod}p^{2}.
\]
Thus,%
\[
0\equiv\sum_{l=0}^{p-1}\left(  \dbinom{-p+2l}{l}-\dbinom{2l}{l}\right)
=\sum_{l=0}^{p-1}\dbinom{-p+2l}{l}-\sum_{l=0}^{p-1}\dbinom{2l}{l}%
\operatorname{mod}p^{2},
\]
so that%
\begin{equation}
\sum_{l=0}^{p-1}\dbinom{2l}{l}\equiv\sum_{l=0}^{p-1}\dbinom{-p+2l}%
{l}\operatorname{mod}p^{2}. \label{pf.thm.Z1.long.1}%
\end{equation}
Now,%
\begin{align*}
\sum_{n=0}^{p-1}\dbinom{2n}{n}  &  =\sum_{l=0}^{p-1}\dbinom{2l}{l}%
\ \ \ \ \ \ \ \ \ \ \left(  \text{here, we have renamed the summation index
}n\text{ as }l\right) \\
&  \equiv\sum_{l=0}^{p-1}\underbrace{\dbinom{-p+2l}{l}}_{\substack{=\left(
-1\right)  ^{l}\dbinom{l-\left(  -p+2l\right)  -1}{l}\\\text{(by Proposition
\ref{prop.binom.upper-neg}}\\\text{(applied to }m=-p+2l\text{ and
}n=l\text{))}}}\ \ \ \ \ \ \ \ \ \ \left(  \text{by (\ref{pf.thm.Z1.long.1}%
)}\right) \\
&  =\sum_{l=0}^{p-1}\left(  -1\right)  ^{l}\underbrace{\dbinom{l-\left(
-p+2l\right)  -1}{l}}_{\substack{=\dbinom{p-1-l}{l}\\\text{(since }l-\left(
-p+2l\right)  -1=p-1-l\text{)}}}=\sum_{l=0}^{p-1}\left(  -1\right)
^{l}\dbinom{p-1-l}{l}\\
&  =\sum_{i=0}^{p-1}\left(  -1\right)  ^{i}\dbinom{p-1-i}{i}%
\ \ \ \ \ \ \ \ \ \ \left(
\begin{array}
[c]{c}%
\text{here, we have renamed the}\\
\text{summation index }l\text{ as }i
\end{array}
\right) \\
&  =\underbrace{\left(  -1\right)  ^{p}}_{\substack{=-1\\\text{(since }p\text{
is odd)}}}\cdot%
\begin{cases}
0, & \text{if }p\equiv0\operatorname{mod}3;\\
-1, & \text{if }p\equiv1\operatorname{mod}3;\\
1, & \text{if }p\equiv2\operatorname{mod}3
\end{cases}
\\
&  \ \ \ \ \ \ \ \ \ \ \left(  \text{by Corollary \ref{cor.ps2.2.S.-1}
(applied to }n=p\text{)}\right) \\
&  =\left(  -1\right)  \cdot%
\begin{cases}
0, & \text{if }p\equiv0\operatorname{mod}3;\\
-1, & \text{if }p\equiv1\operatorname{mod}3;\\
1, & \text{if }p\equiv2\operatorname{mod}3
\end{cases}
=-%
\begin{cases}
0, & \text{if }p\equiv0\operatorname{mod}3;\\
-1, & \text{if }p\equiv1\operatorname{mod}3;\\
1, & \text{if }p\equiv2\operatorname{mod}3
\end{cases}
\\
&  =%
\begin{cases}
-0, & \text{if }p\equiv0\operatorname{mod}3;\\
-\left(  -1\right)  , & \text{if }p\equiv1\operatorname{mod}3;\\
-1, & \text{if }p\equiv2\operatorname{mod}3
\end{cases}
=%
\begin{cases}
0, & \text{if }p\equiv0\operatorname{mod}3;\\
1, & \text{if }p\equiv1\operatorname{mod}3;\\
-1, & \text{if }p\equiv2\operatorname{mod}3
\end{cases}
\\
&  \ \ \ \ \ \ \ \ \ \ \left(  \text{since }-0=0\text{ and }-\left(
-1\right)  =1\right) \\
&  =\eta_{p}\operatorname{mod}p^{2}\ \ \ \ \ \ \ \ \ \ \left(  \text{since
}\eta_{p}=%
\begin{cases}
0, & \text{if }p\equiv0\operatorname{mod}3;\\
1, & \text{if }p\equiv1\operatorname{mod}3;\\
-1, & \text{if }p\equiv2\operatorname{mod}3
\end{cases}
\right)  .
\end{align*}
This proves Theorem \ref{thm.Z1}.
\end{proof}
\end{verlong}

\subsection{Proving Theorem \ref{thm.Z2}}

\begin{lemma}
\label{lem.A1}Let $N\in\mathbb{Z}$ and $K\in\mathbb{N}$. Let $p$ be a prime.
Let $l\in\left\{  0,1,\ldots,p-1\right\}  $. Then,%
\[
\dbinom{Np+2l}{Kp+l}-\dbinom{N}{K}\dbinom{2l}{l}\equiv N\dbinom{N}{K}\left(
\dbinom{p+2l}{l}-\dbinom{2l}{l}\right)  \operatorname{mod}p^{2}.
\]

\end{lemma}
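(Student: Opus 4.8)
The plan is to expand the left-hand binomial coefficient using Lemma \ref{lem.A1a} and then replace each resulting binomial coefficient by the appropriate modulo-$p^{2}$ congruence. First I would apply Lemma \ref{lem.A1a} with $u=Np$ and $w=Kp$ (this is legitimate since $Kp\in\mathbb{N}$, because $K\in\mathbb{N}$ and $p$ is a prime, and since $l\in\mathbb{N}$), obtaining
\[
\dbinom{Np+2l}{Kp+l}=\dbinom{Np}{Kp}\dbinom{2l}{l}+\sum_{i=1}^{l}\left(\dbinom{Np}{Kp+i}+\dbinom{Np}{Kp-i}\right)\dbinom{2l}{l-i}.
\]

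Next I would rewrite the two kinds of binomial coefficients on the right modulo $p^{2}$. For the leading term, Theorem \ref{thm.p2cong} gives $\dbinom{Np}{Kp}\equiv\dbinom{N}{K}\operatorname{mod}p^{2}$. For each $i\in\left\{1,2,\ldots,l\right\}$, observe that $i\in\left\{1,2,\ldots,p-1\right\}$ (since $l\leq p-1$), so Theorem \ref{thm.bailey} \textbf{(c)} applies and yields $\dbinom{Np}{Kp+i}+\dbinom{Np}{Kp-i}\equiv N\dbinom{N}{K}\dbinom{p}{i}\operatorname{mod}p^{2}$. Multiplying this congruence by the integer $\dbinom{2l}{l-i}$ and summing over $i$ (both operations preserve congruence modulo $p^{2}$) turns the displayed identity into
\[
\dbinom{Np+2l}{Kp+l}\equiv\dbinom{N}{K}\dbinom{2l}{l}+N\dbinom{N}{K}\sum_{i=1}^{l}\dbinom{p}{i}\dbinom{2l}{l-i}\operatorname{mod}p^{2}.
\]

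Finally I would evaluate the remaining sum via Lemma \ref{lem.vandermonde.4}, which gives $\sum_{i=1}^{l}\dbinom{p}{i}\dbinom{2l}{l-i}=\dbinom{p+2l}{l}-\dbinom{2l}{l}$. Substituting this and subtracting $\dbinom{N}{K}\dbinom{2l}{l}$ from both sides yields exactly the claimed congruence. I expect no serious obstacle: the only points demanding care are verifying that the hypotheses of Lemma \ref{lem.A1a} and of Theorem \ref{thm.bailey} \textbf{(c)} are met (which is precisely where $K\in\mathbb{N}$ and $l\leq p-1$ are used), and noting that scaling a modulo-$p^{2}$ congruence by an integer and adding finitely many such congruences again produces a valid modulo-$p^{2}$ congruence.
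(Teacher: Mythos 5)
Your proposal is correct and follows exactly the same route as the paper's own proof: expand via Lemma \ref{lem.A1a} with $u=Np$, $w=Kp$, then apply Theorem \ref{thm.p2cong} to the leading term, Theorem \ref{thm.bailey} \textbf{(c)} to each paired term (valid since $1\leq i\leq l\leq p-1$), and finish with Lemma \ref{lem.vandermonde.4}. No gaps; the hypothesis checks you flag are precisely the ones that matter.
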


\begin{vershort}
\begin{proof}
[Proof of Lemma \ref{lem.A1}.]Theorem \ref{thm.p2cong} yields $\dbinom{Np}%
{Kp}\equiv\dbinom{N}{K}\operatorname{mod}p^{2}$.

Lemma \ref{lem.A1a} (applied to $u=Np$ and $w=Kp$) yields%
\begin{align*}
\dbinom{Np+2l}{Kp+l}  &  =\underbrace{\dbinom{Np}{Kp}}_{\equiv\dbinom{N}%
{K}\operatorname{mod}p^{2}}\dbinom{2l}{l}+\sum_{i=1}^{l}\underbrace{\left(
\dbinom{Np}{Kp+i}+\dbinom{Np}{Kp-i}\right)  }_{\substack{\equiv N\dbinom{N}%
{K}\dbinom{p}{i}\operatorname{mod}p^{2}\\\text{(by Theorem \ref{thm.bailey}
\textbf{(c)})}}}\dbinom{2l}{l-i}\\
&  \equiv\dbinom{N}{K}\dbinom{2l}{l}+\sum\limits_{i=1}^{l}N\dbinom{N}%
{K}\dbinom{p}{i}\dbinom{2l}{l-i}\\
&  =\dbinom{N}{K}\dbinom{2l}{l}+N\dbinom{N}{K}\underbrace{\sum\limits_{i=1}%
^{l}\dbinom{p}{i}\dbinom{2l}{l-i}}_{\substack{=\dbinom{p+2l}{l}-\dbinom{2l}%
{l}\\\text{(by Lemma \ref{lem.vandermonde.4})}}}\\
&  =\dbinom{N}{K}\dbinom{2l}{l}+N\dbinom{N}{K}\left(  \dbinom{p+2l}{l}%
-\dbinom{2l}{l}\right)  \operatorname{mod}p^{2}.
\end{align*}
Subtracting $\dbinom{N}{K}\dbinom{2l}{l}$ from both sides of this congruence,
we obtain%
\[
\dbinom{Np+2l}{Kp+l}-\dbinom{N}{K}\dbinom{2l}{l}\equiv N\dbinom{N}{K}\left(
\dbinom{p+2l}{l}-\dbinom{2l}{l}\right)  \operatorname{mod}p^{2}.
\]
This proves Lemma \ref{lem.A1}.
\end{proof}
\end{vershort}

\begin{verlong}
\begin{proof}
[Proof of Lemma \ref{lem.A1}.]Theorem \ref{thm.p2cong} (applied to $a=N$ and
$b=K$) yields $\dbinom{Np}{Kp}\equiv\dbinom{N}{K}\operatorname{mod}p^{2}$.

Lemma \ref{lem.A1a} (applied to $u=Np$ and $w=Kp$) yields%
\begin{align*}
\dbinom{Np+2l}{Kp+l}  &  =\underbrace{\dbinom{Np}{Kp}}_{\equiv\dbinom{N}%
{K}\operatorname{mod}p^{2}}\dbinom{2l}{l}+\sum_{i=1}^{l}\underbrace{\left(
\dbinom{Np}{Kp+i}+\dbinom{Np}{Kp-i}\right)  }_{\substack{\equiv N\dbinom{N}%
{K}\dbinom{p}{i}\operatorname{mod}p^{2}\\\text{(by Theorem \ref{thm.bailey}
\textbf{(c)})}}}\dbinom{2l}{l-i}\\
&  \equiv\dbinom{N}{K}\dbinom{2l}{l}+\underbrace{\sum\limits_{i=1}^{l}%
N\dbinom{N}{K}\dbinom{p}{i}\dbinom{2l}{l-i}}_{=N\dbinom{N}{K}\sum
\limits_{i=1}^{l}\dbinom{p}{i}\dbinom{2l}{l-i}}\\
&  =\dbinom{N}{K}\dbinom{2l}{l}+N\dbinom{N}{K}\underbrace{\sum\limits_{i=1}%
^{l}\dbinom{p}{i}\dbinom{2l}{l-i}}_{\substack{=\dbinom{p+2l}{l}-\dbinom{2l}%
{l}\\\text{(by Lemma \ref{lem.vandermonde.4})}}}\\
&  =\dbinom{N}{K}\dbinom{2l}{l}+N\dbinom{N}{K}\left(  \dbinom{p+2l}{l}%
-\dbinom{2l}{l}\right)  \operatorname{mod}p^{2}.
\end{align*}
Subtracting $\dbinom{N}{K}\dbinom{2l}{l}$ from both sides of this congruence,
we obtain%
\[
\dbinom{Np+2l}{Kp+l}-\dbinom{N}{K}\dbinom{2l}{l}\equiv N\dbinom{N}{K}\left(
\dbinom{p+2l}{l}-\dbinom{2l}{l}\right)  \operatorname{mod}p^{2}.
\]
This proves Lemma \ref{lem.A1}.
\end{proof}
\end{verlong}

\begin{lemma}
\label{lem.C1}Let $p$ be an odd prime. Let $N\in\mathbb{Z}$ and $K\in
\mathbb{N}$. Then,%
\[
\sum_{l=0}^{p-1}\dbinom{Np+2l}{Kp+l}\equiv\dbinom{N}{K}\eta_{p}%
\operatorname{mod}p^{2}.
\]

\end{lemma}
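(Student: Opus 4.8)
The plan is to assemble this congruence directly from the three main ingredients already established: Lemma~\ref{lem.A1}, Lemma~\ref{lem.A6}, and Theorem~\ref{thm.Z1}. The key observation is that Lemma~\ref{lem.A1} gives a pointwise (in $l$) congruence whose right-hand side becomes harmless after summation, and whose left-hand side contains precisely the sum we want to understand.

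First I would apply Lemma~\ref{lem.A1} for each $l\in\left\{0,1,\ldots,p-1\right\}$ and sum the resulting congruences over all such $l$. This yields
\[
\sum_{l=0}^{p-1}\dbinom{Np+2l}{Kp+l}-\dbinom{N}{K}\sum_{l=0}^{p-1}\dbinom{2l}{l}\equiv N\dbinom{N}{K}\sum_{l=0}^{p-1}\left(\dbinom{p+2l}{l}-\dbinom{2l}{l}\right)\operatorname{mod}p^{2}.
\]
The crucial step is to dispose of the right-hand side: applying Lemma~\ref{lem.A6} with $c=1$ shows that $\sum_{l=0}^{p-1}\left(\dbinom{p+2l}{l}-\dbinom{2l}{l}\right)\equiv0\operatorname{mod}p^{2}$, so the entire right-hand side vanishes modulo $p^{2}$. (This is where the real content sits, but that content has already been extracted in Lemma~\ref{lem.A6}, which itself rests on Lemmas~\ref{lem.A5} and~\ref{lem.vandermonde.3}.) Consequently
\[
\sum_{l=0}^{p-1}\dbinom{Np+2l}{Kp+l}\equiv\dbinom{N}{K}\sum_{l=0}^{p-1}\dbinom{2l}{l}\operatorname{mod}p^{2}.
\]

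To finish, I would invoke Theorem~\ref{thm.Z1}, which (after the trivial renaming of the summation index from $n$ to $l$) gives $\sum_{l=0}^{p-1}\dbinom{2l}{l}\equiv\eta_{p}\operatorname{mod}p^{2}$. Substituting this into the previous congruence produces
\[
\sum_{l=0}^{p-1}\dbinom{Np+2l}{Kp+l}\equiv\dbinom{N}{K}\eta_{p}\operatorname{mod}p^{2},
\]
which is exactly the claim. The proof is therefore a short three-line combination; there is no genuine obstacle remaining at this stage, since all the analytic difficulty has been front-loaded into Lemma~\ref{lem.A6} (the vanishing of the correction term) and into Theorem~\ref{thm.Z1} (the evaluation of $\sum\dbinom{2l}{l}$). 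The only point demanding a moment's care is verifying that the hypotheses line up: Lemma~\ref{lem.A1} needs $N\in\mathbb{Z}$, $K\in\mathbb{N}$, and $l\in\left\{0,1,\ldots,p-1\right\}$, all of which hold; Lemma~\ref{lem.A6} needs $p$ an odd prime and $c\in\mathbb{Z}$, satisfied by $c=1$; and Theorem~\ref{thm.Z1} needs $p$ odd prime. All conditions are met, so the assembly goes through cleanly.
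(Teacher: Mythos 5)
Your proposal is correct and follows exactly the paper's own proof: sum the congruence of Lemma~\ref{lem.A1} over $l\in\left\{0,1,\ldots,p-1\right\}$, annihilate the correction term via Lemma~\ref{lem.A6} with $c=1$, and evaluate the remaining sum $\sum_{l=0}^{p-1}\dbinom{2l}{l}$ using Theorem~\ref{thm.Z1}. All hypothesis checks you perform match the paper's argument, so there is nothing to add.
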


\begin{vershort}
\begin{proof}
[Proof of Lemma \ref{lem.C1}.]For any $l\in\left\{  0,1,\ldots,p-1\right\}  $,
we have%
\[
\dbinom{Np+2l}{Kp+l}\equiv\dbinom{N}{K}\dbinom{2l}{l}+N\dbinom{N}{K}\left(
\dbinom{p+2l}{l}-\dbinom{2l}{l}\right)  \operatorname{mod}p^{2}%
\]
(by Lemma \ref{lem.A1}). Summing these congruences over all $l\in\left\{
0,1,\ldots,p-1\right\}  $, we find%
\begin{align*}
\sum_{l=0}^{p-1}\dbinom{Np+2l}{Kp+l}  &  \equiv\sum_{l=0}^{p-1}\left(
\dbinom{N}{K}\dbinom{2l}{l}+N\dbinom{N}{K}\left(  \dbinom{p+2l}{l}-\dbinom
{2l}{l}\right)  \right) \\
&  =\dbinom{N}{K}\sum_{l=0}^{p-1}\dbinom{2l}{l}+N\dbinom{N}{K}\underbrace{\sum
_{l=0}^{p-1}\left(  \dbinom{p+2l}{l}-\dbinom{2l}{l}\right)  }%
_{\substack{\equiv0\operatorname{mod}p^{2}\\\text{(by Lemma \ref{lem.A6},
applied to }c=1\text{)}}}\\
&  \equiv\dbinom{N}{K}\underbrace{\sum_{l=0}^{p-1}\dbinom{2l}{l}%
}_{\substack{=\sum_{n=0}^{p-1}\dbinom{2n}{n}\equiv\eta_{p}\operatorname{mod}%
p^{2}\\\text{(by Theorem \ref{thm.Z1})}}}\equiv\dbinom{N}{K}\eta
_{p}\operatorname{mod}p^{2}.
\end{align*}
This proves Lemma \ref{lem.C1}.
\end{proof}
\end{vershort}

\begin{verlong}
\begin{proof}
[Proof of Lemma \ref{lem.C1}.]We have%
\begin{align}
\sum_{l=0}^{p-1}\dbinom{2l}{l}  &  =\sum_{n=0}^{p-1}\dbinom{2n}{n}%
\ \ \ \ \ \ \ \ \ \ \left(
\begin{array}
[c]{c}%
\text{here, we have renamed the}\\
\text{summation index }l\text{ as }n
\end{array}
\right) \nonumber\\
&  \equiv\eta_{p}\operatorname{mod}p^{2}\ \ \ \ \ \ \ \ \ \ \left(  \text{by
Theorem \ref{thm.Z1}}\right)  . \label{pf.lem.C1.long.1}%
\end{align}

Lemma \ref{lem.A6} (applied to $c=1$) yields%
\[
\sum_{l=0}^{p-1}\left(  \dbinom{1p+2l}{l}-\dbinom{2l}{l}\right)
\equiv0\operatorname{mod}p^{2}.
\]
In view of $1p=p$, this rewrites as
\begin{equation}
\sum_{l=0}^{p-1}\left(  \dbinom{p+2l}{l}-\dbinom{2l}{l}\right)  \equiv
0\operatorname{mod}p^{2}. \label{pf.lem.C1.long.0}%
\end{equation}

Let $l\in\left\{  0,1,\ldots,p-1\right\}  $. Then, Lemma \ref{lem.A1} yields%
\[
\dbinom{Np+2l}{Kp+l}-\dbinom{N}{K}\dbinom{2l}{l}\equiv N\dbinom{N}{K}\left(
\dbinom{p+2l}{l}-\dbinom{2l}{l}\right)  \operatorname{mod}p^{2}.
\]
Adding $\dbinom{N}{K}\dbinom{2l}{l}$ to both sides of this congruence, we
obtain%
\begin{equation}
\dbinom{Np+2l}{Kp+l}\equiv\dbinom{N}{K}\dbinom{2l}{l}+N\dbinom{N}{K}\left(
\dbinom{p+2l}{l}-\dbinom{2l}{l}\right)  \operatorname{mod}p^{2}.
\label{pf.lem.C1.long.2}%
\end{equation}

Now, forget that we fixed $l$. We thus have proven (\ref{pf.lem.C1.long.2})
for each $l\in\left\{  0,1,\ldots,p-1\right\}  $.

Summing up the congruences (\ref{pf.lem.C1.long.2}) over all $l\in\left\{
0,1,\ldots,p-1\right\}  $, we find%
\begin{align*}
\sum_{l=0}^{p-1}\dbinom{Np+2l}{Kp+l}  &  \equiv\sum_{l=0}^{p-1}\left(
\dbinom{N}{K}\dbinom{2l}{l}+N\dbinom{N}{K}\left(  \dbinom{p+2l}{l}-\dbinom
{2l}{l}\right)  \right) \\
&  =\dbinom{N}{K}\underbrace{\sum_{l=0}^{p-1}\dbinom{2l}{l}}_{\substack{\equiv
\eta_{p}\operatorname{mod}p^{2}\\\text{(by (\ref{pf.lem.C1.long.1}))}%
}}+N\dbinom{N}{K}\underbrace{\sum_{l=0}^{p-1}\left(  \dbinom{p+2l}{l}%
-\dbinom{2l}{l}\right)  }_{\substack{\equiv0\operatorname{mod}p^{2}\\\text{(by
(\ref{pf.lem.C1.long.0}))}}}\\
&  \equiv\dbinom{N}{K}\eta_{p}+\underbrace{N\dbinom{N}{K}0}_{=0}=\dbinom{N}%
{K}\eta_{p}\operatorname{mod}p^{2}.
\end{align*}
This proves Lemma \ref{lem.C1}.
\end{proof}
\end{verlong}

\begin{vershort}
\begin{proof}
[Proof of Theorem \ref{thm.Z2}.]The map%
\begin{align*}
\left\{  0,1,\ldots,p-1\right\}  \times\left\{  0,1,\ldots,r-1\right\}   &
\rightarrow\left\{  0,1,\ldots,rp-1\right\}  ,\\
\left(  l,K\right)   &  \mapsto Kp+l
\end{align*}
is a bijection (since each element of $\left\{  0,1,\ldots,rp-1\right\}  $ can
be uniquely divided by $p$ with remainder, and said remainder will belong to
$\left\{  0,1,\ldots,r-1\right\}  $). Thus, we can substitute $Kp+l$ for $n$
in the sum $\sum_{n=0}^{rp-1}\dbinom{2n}{n}$. This sum thus rewrites as
follows:%
\begin{align*}
\sum_{n=0}^{rp-1}\dbinom{2n}{n}  &  =\underbrace{\sum_{\left(  l,K\right)
\in\left\{  0,1,\ldots,p-1\right\}  \times\left\{  0,1,\ldots,r-1\right\}  }%
}_{=\sum_{K=0}^{r-1}\sum_{l=0}^{p-1}}\underbrace{\dbinom{2\left(  Kp+l\right)
}{Kp+l}}_{=\dbinom{2Kp+2l}{Kp+l}}=\sum_{K=0}^{r-1}\underbrace{\sum_{l=0}%
^{p-1}\dbinom{2Kp+2l}{Kp+l}}_{\substack{\equiv\dbinom{2K}{K}\eta
_{p}\operatorname{mod}p^{2}\\\text{(by Lemma \ref{lem.C1},}\\\text{applied to
}N=2K\text{)}}}\\
&  \equiv\underbrace{\sum_{K=0}^{r-1}\dbinom{2K}{K}}_{=\sum_{n=0}^{r-1}%
\dbinom{2n}{n}=\alpha_{r}}\eta_{p}=\alpha_{r}\eta_{p}=\eta_{p}\alpha
_{r}\operatorname{mod}p^{2}.
\end{align*}
This proves Theorem \ref{thm.Z2}.
\end{proof}
\end{vershort}

\begin{verlong}
Next, we state a well-known combinatorial lemma:

\begin{lemma}
\label{lem.pigeonhole.inj}Let $U$ and $V$ be two finite sets such that
$\left\vert U\right\vert =\left\vert V\right\vert $. Let $f:U\rightarrow V$ be
any injective map. Then, the map $f$ is bijective.
\end{lemma}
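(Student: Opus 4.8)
The plan is to exploit the fact that an injective map does not collapse distinct elements, so its image carries exactly as many elements as the domain, and then to use that a full-size subset of a finite set is the whole set. First I would observe that, since $f$ is injective, corestricting $f$ to its image yields a bijection $U\rightarrow f\left(U\right)$; hence $\left\vert f\left(U\right)\right\vert=\left\vert U\right\vert$. Combining this with the hypothesis $\left\vert U\right\vert=\left\vert V\right\vert$, I obtain $\left\vert f\left(U\right)\right\vert=\left\vert V\right\vert$.

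Next I would use that $f\left(U\right)$ is a subset of $V$. The crucial step is the elementary set-theoretic fact that a subset $S$ of a finite set $V$ satisfying $\left\vert S\right\vert=\left\vert V\right\vert$ must coincide with $V$: indeed $\left\vert V\setminus S\right\vert=\left\vert V\right\vert-\left\vert S\right\vert=0$ forces $V\setminus S=\emptyset$, whence $S=V$. Applying this with $S=f\left(U\right)$, I conclude $f\left(U\right)=V$; in other words, $f$ is surjective.

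Finally, since $f$ is injective (by hypothesis) and surjective (just shown), the map $f$ is bijective, which is exactly the assertion of the lemma. The only mildly delicate point is the middle step, namely that a subset of a finite set having the same cardinality as the ambient set must fill it entirely; everything else follows directly from the definitions of injectivity, of the cardinality of a finite set, and of bijectivity. This subset step is where I expect to lean on the finiteness of $V$ most essentially, since the statement fails without it.
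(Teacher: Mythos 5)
Your proof is correct. Note that the paper itself gives no proof of this lemma at all: it is stated as a ``well-known combinatorial lemma'' and used without further justification, so your argument supplies a proof where the paper simply appeals to folklore. The route you take is the standard one: injectivity gives a bijection $U\rightarrow f\left(U\right)$ and hence $\left\vert f\left(U\right)\right\vert =\left\vert U\right\vert =\left\vert V\right\vert$; then the counting identity $\left\vert V\right\vert =\left\vert S\right\vert +\left\vert V\setminus S\right\vert$ for a subset $S$ of a finite set $V$ forces $f\left(U\right)=V$, i.e., surjectivity. You are also right to flag the middle step as the place where finiteness enters essentially (the statement fails for, say, the doubling map on the integers), and your reduction of that step to ``a finite set of cardinality $0$ is empty'' is exactly the right level of granularity for an elementary write-up.
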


Another simple lemma is the following:

\begin{lemma}
\label{lem.redressys}Let $p$ be a positive integer. Let $u$ and $v$ be two
elements of $\left\{  0,1,\ldots,p-1\right\}  $ such that $u\equiv
v\operatorname{mod}p$. Then, $u=v$.
\end{lemma}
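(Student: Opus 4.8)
The plan is to translate the congruence into a divisibility statement and then exploit the fact that $u$ and $v$ lie in a window of length $p$. First I would observe that the hypothesis $u\equiv v\operatorname{mod}p$ means exactly that $p\mid u-v$. Next, from $u,v\in\left\{  0,1,\ldots,p-1\right\}  $ I would extract the bounds $0\leq u\leq p-1$ and $0\leq v\leq p-1$, which together give
\[
-\left(  p-1\right)  \leq u-v\leq p-1.
\]
Thus $u-v$ is an integer of absolute value at most $p-1<p$.

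The key step is then to note that $0$ is the only multiple of $p$ whose absolute value is smaller than $p$: indeed, any nonzero multiple of $p$ has absolute value at least $p$. Since $p\mid u-v$ and $\left\vert u-v\right\vert \leq p-1<p$, the integer $u-v$ must therefore be $0$. Hence $u=v$, which proves Lemma \ref{lem.redressys}.

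There is no genuine obstacle here; the statement is the standard \emph{uniqueness of remainders} fact, and the whole argument is a one-line bounding estimate. The only point deserving explicit mention is the implication that $p\mid u-v$ together with $\left\vert u-v\right\vert <p$ forces $u-v=0$; I would phrase this contrapositively, noting that if $u-v\neq 0$ held, then its divisibility by $p$ would force $\left\vert u-v\right\vert \geq p$, contradicting the bound just obtained.
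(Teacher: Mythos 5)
Your proof is correct and follows essentially the same route as the paper's: both arguments reduce the congruence to the divisibility $p\mid u-v$, bound the difference strictly below $p$ using $u,v\in\left\{ 0,1,\ldots,p-1\right\}$, and conclude that a multiple of $p$ of absolute value less than $p$ must vanish. The only cosmetic difference is that the paper assumes $u\leq v$ without loss of generality and argues by contradiction with $v-u\geq p$, whereas you work with $\left\vert u-v\right\vert$ directly; this is the same idea in different clothing.
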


\begin{proof}
[Proof of Lemma \ref{lem.redressys}.]We can WLOG assume that $u\leq v$ (since
otherwise, we can simply switch $u$ with $v$). Assume this.

We must prove that $u=v$. Indeed, assume the contrary. Thus, $u\neq v$.
Combining this with $u\leq v$, we obtain $u<v$. Thus, $v-u$ is a positive integer.

But $v\equiv u\operatorname{mod}p$ (since $u\equiv v\operatorname{mod}p$).
Thus, $v-u$ is divisible by $p$. Hence, $v-u\geq p$ (because $v-u$ and $p$ are
positive integers).

From $u\in\left\{  0,1,\ldots,p-1\right\}  $, we obtain $u\geq0$. From
$v\in\left\{  0,1,\ldots,p-1\right\}  $, we obtain $v\leq p-1$. Thus,
$\underbrace{v}_{\leq p-1}-\underbrace{u}_{\geq0}\leq p-1-0=p-1<p$. This
contradicts $v-u\geq p$. This contradiction shows that our assumption was
wrong. Hence, $u=v$ is proven. Thus, we have proven Lemma \ref{lem.redressys}.
\end{proof}

We can use this to prove the following basic fact:

\begin{lemma}
\label{lem.pr-bij}Let $p$ be a positive integer. Let $r\in\mathbb{N}$. Then,
the map%
\begin{align*}
\left\{  0,1,\ldots,p-1\right\}  \times\left\{  0,1,\ldots,r-1\right\}   &
\rightarrow\left\{  0,1,\ldots,rp-1\right\}  ,\\
\left(  l,K\right)   &  \mapsto Kp+l
\end{align*}
is well-defined and is a bijection.
\end{lemma}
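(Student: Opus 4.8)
The plan is to treat this as the standard Euclidean-division bijection: verify well-definedness directly, and then deduce bijectivity from injectivity via a cardinality count, using the two lemmas just proved.

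First I would check that the map is well-defined, i.e.\ that $Kp+l\in\left\{0,1,\ldots,rp-1\right\}$ for every pair $\left(l,K\right)$ in the domain. The lower bound $Kp+l\geq0$ is immediate from $K\geq0$ and $l\geq0$. For the upper bound, I would use $K\leq r-1$ and $l\leq p-1$ to estimate $Kp+l\leq\left(r-1\right)p+\left(p-1\right)=rp-1$, which places the image in the codomain.

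Next, to obtain bijectivity cheaply, I would invoke Lemma \ref{lem.pigeonhole.inj}. Both the domain and the codomain are finite, and their cardinalities coincide: the domain has $p\cdot r=rp$ elements, and the codomain $\left\{0,1,\ldots,rp-1\right\}$ also has $rp$ elements. Hence it suffices to prove that the map is injective. For injectivity, I would take two pairs $\left(l_{1},K_{1}\right)$ and $\left(l_{2},K_{2}\right)$ in the domain with the same image, so that $K_{1}p+l_{1}=K_{2}p+l_{2}$. Reducing this equality modulo $p$ gives $l_{1}\equiv l_{2}\operatorname{mod}p$; since $l_{1}$ and $l_{2}$ both lie in $\left\{0,1,\ldots,p-1\right\}$, Lemma \ref{lem.redressys} forces $l_{1}=l_{2}$. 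Cancelling $l_{1}=l_{2}$ from the original equation yields $K_{1}p=K_{2}p$, and since $p$ is a positive (hence nonzero) integer, cancelling $p$ gives $K_{1}=K_{2}$. Thus $\left(l_{1},K_{1}\right)=\left(l_{2},K_{2}\right)$, so the map is injective, and by the cardinality argument it is therefore a bijection.

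There is no genuine obstacle here; the only points demanding a little care are the upper-bound computation in the well-definedness step (landing on exactly $rp-1$ rather than an off-by-one), and the small piece of bookkeeping needed to pass from the image equation to a congruence modulo $p$ before Lemma \ref{lem.redressys} can be applied. The edge case $r=0$ is handled automatically: then both sets are empty, and the map is the (vacuously bijective) empty map.
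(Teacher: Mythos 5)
Your proposal is correct and follows essentially the same route as the paper's own proof: verify well-definedness by the bound $Kp+l\leq\left(r-1\right)p+\left(p-1\right)=rp-1$, prove injectivity by reducing modulo $p$ (via Lemma \ref{lem.redressys}) and then cancelling $p$, and conclude bijectivity from the equal cardinalities $p\cdot r=rp$ using Lemma \ref{lem.pigeonhole.inj}. No gaps; nothing further needed.
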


\begin{proof}
[Proof of Lemma \ref{lem.pr-bij}.]We have $p\in\mathbb{N}$ (since $p$ is a
positive integer).

For each $\left(  l,K\right)  \in\left\{  0,1,\ldots,p-1\right\}
\times\left\{  0,1,\ldots,r-1\right\}  $, we have $Kp+l\in\left\{
0,1,\ldots,rp-1\right\}  $\ \ \ \ \footnote{\textit{Proof.} Let $\left(
l,K\right)  \in\left\{  0,1,\ldots,p-1\right\}  \times\left\{  0,1,\ldots
,r-1\right\}  $. We must prove that $Kp+l\in\left\{  0,1,\ldots,rp-1\right\}
$.
\par
We have $\left(  l,K\right)  \in\left\{  0,1,\ldots,p-1\right\}
\times\left\{  0,1,\ldots,r-1\right\}  $. In other words, $l\in\left\{
0,1,\ldots,p-1\right\}  $ and $K\in\left\{  0,1,\ldots,r-1\right\}  $.
\par
From $K\in\left\{  0,1,\ldots,r-1\right\}  \subseteq\mathbb{N}$ and
$p\in\mathbb{N}$ (since $p$ is a positive integer), we conclude that
$Kp\in\mathbb{N}$. Combining this with $l\in\left\{  0,1,\ldots,p-1\right\}
\subseteq\mathbb{N}$, we obtain $Kp+l\in\mathbb{N}$.
\par
From $K\in\left\{  0,1,\ldots,r-1\right\}  $, we obtain $K\leq r-1$. We can
multiply this inequality with $p$ (since $p\geq0$); thus, we obtain
$Kp\leq\left(  r-1\right)  p$. From $l\in\left\{  0,1,\ldots,p-1\right\}  $,
we obtain $l\leq p-1$. Thus, $\underbrace{Kp}_{\leq\left(  r-1\right)
p}+\underbrace{l}_{\leq p-1}\leq\left(  r-1\right)  p+\left(  p-1\right)
=rp-1$. Hence, $Kp+l\in\left\{  0,1,\ldots,rp-1\right\}  $ (since
$Kp+l\in\mathbb{N}$). Qed.}. Hence, the map%
\begin{align*}
\left\{  0,1,\ldots,p-1\right\}  \times\left\{  0,1,\ldots,r-1\right\}   &
\rightarrow\left\{  0,1,\ldots,rp-1\right\}  ,\\
\left(  l,K\right)   &  \mapsto Kp+l
\end{align*}
is well-defined. Denote this map by $A$.

Now, let $u_{1}$ and $u_{2}$ be two elements of $\left\{  0,1,\ldots
,p-1\right\}  \times\left\{  0,1,\ldots,r-1\right\}  $ satisfying $A\left(
u_{1}\right)  =A\left(  u_{2}\right)  $. We shall prove that $u_{1}=u_{2}$.

We have $u_{1}\in\left\{  0,1,\ldots,p-1\right\}  \times\left\{
0,1,\ldots,r-1\right\}  $. Hence, we can write $u_{1}$ in the form
$u_{1}=\left(  l_{1},K_{1}\right)  $ for some $l_{1}\in\left\{  0,1,\ldots
,p-1\right\}  $ and $K_{1}\in\left\{  0,1,\ldots,r-1\right\}  $. Consider
these $l_{1}$ and $K_{1}$.

We have $u_{2}\in\left\{  0,1,\ldots,p-1\right\}  \times\left\{
0,1,\ldots,r-1\right\}  $. Hence, we can write $u_{2}$ in the form
$u_{2}=\left(  l_{2},K_{2}\right)  $ for some $l_{2}\in\left\{  0,1,\ldots
,p-1\right\}  $ and $K_{2}\in\left\{  0,1,\ldots,r-1\right\}  $. Consider
these $l_{2}$ and $K_{2}$.

From $u_{1}=\left(  l_{1},K_{1}\right)  $, we obtain%
\begin{align*}
A\left(  u_{1}\right)   &  =A\left(  l_{1},K_{1}\right)  =K_{1}p+l_{1}%
\ \ \ \ \ \ \ \ \ \ \left(  \text{by the definition of }A\right) \\
&  \equiv l_{1}\operatorname{mod}p.
\end{align*}
From $u_{2}=\left(  l_{2},K_{2}\right)  $, we obtain%
\begin{align*}
A\left(  u_{2}\right)   &  =A\left(  l_{2},K_{2}\right)  =K_{2}p+l_{2}%
\ \ \ \ \ \ \ \ \ \ \left(  \text{by the definition of }A\right) \\
&  \equiv l_{2}\operatorname{mod}p.
\end{align*}
But $A\left(  u_{1}\right)  \equiv l_{1}\operatorname{mod}p$, so that
$l_{1}\equiv A\left(  u_{1}\right)  =A\left(  u_{2}\right)  \equiv
l_{2}\operatorname{mod}p$. Hence, Lemma \ref{lem.redressys} (applied to
$l_{1}$ and $l_{2}$ instead of $u$ and $v$) yields $l_{1}=l_{2}$. Now,
$A\left(  u_{1}\right)  =K_{1}p+\underbrace{l_{1}}_{=l_{2}}=K_{1}p+l_{2}$, so
that%
\[
K_{1}p+l_{2}=A\left(  u_{1}\right)  =A\left(  u_{2}\right)  =K_{2}p+l_{2}.
\]
Subtracting $l_{2}$ from both sides of this equality, we obtain $K_{1}%
p=K_{2}p$. Dividing both sides of this equality by $p$ (this is allowed, since
$p$ is a positive integer), we obtain $K_{1}=K_{2}$.

Now, $u_{1}=\left(  \underbrace{l_{1}}_{=l_{2}},\underbrace{K_{1}}_{=K_{2}%
}\right)  =\left(  l_{2},K_{2}\right)  =u_{2}$.

Now, forget that we fixed $u_{1}$ and $u_{2}$. We thus have shown that if
$u_{1}$ and $u_{2}$ are two elements of $\left\{  0,1,\ldots,p-1\right\}
\times\left\{  0,1,\ldots,r-1\right\}  $ satisfying $A\left(  u_{1}\right)
=A\left(  u_{2}\right)  $, then $u_{1}=u_{2}$.

In other words, the map $A$ is injective.

But
\[
\left\vert \left\{  0,1,\ldots,p-1\right\}  \times\left\{  0,1,\ldots
,r-1\right\}  \right\vert =\underbrace{\left\vert \left\{  0,1,\ldots
,p-1\right\}  \right\vert }_{=p}\cdot\underbrace{\left\vert \left\{
0,1,\ldots,r-1\right\}  \right\vert }_{=r}=p\cdot r=rp.
\]
Thus, in particular, $\left\{  0,1,\ldots,p-1\right\}  \times\left\{
0,1,\ldots,r-1\right\}  $ is a finite set. Also, $\left\{  0,1,\ldots
,rp-1\right\}  $ is a finite set with size $\left\vert \left\{  0,1,\ldots
,rp-1\right\}  \right\vert =rp$. Hence,%
\[
\left\vert \left\{  0,1,\ldots,p-1\right\}  \times\left\{  0,1,\ldots
,r-1\right\}  \right\vert =rp=\left\vert \left\{  0,1,\ldots,rp-1\right\}
\right\vert .
\]
Hence, Lemma \ref{lem.pigeonhole.inj} (applied to $U=\left\{  0,1,\ldots
,p-1\right\}  \times\left\{  0,1,\ldots,r-1\right\}  $, $V=\left\{
0,1,\ldots,rp-1\right\}  $ and $f=A$) shows that the map $A$ is bijective. In
other words, $A$ is a bijection. In other words, the map%
\begin{align*}
\left\{  0,1,\ldots,p-1\right\}  \times\left\{  0,1,\ldots,r-1\right\}   &
\rightarrow\left\{  0,1,\ldots,rp-1\right\}  ,\\
\left(  l,K\right)   &  \mapsto Kp+l
\end{align*}
is a bijection\footnote{since $A$ is the map
\begin{align*}
\left\{  0,1,\ldots,p-1\right\}  \times\left\{  0,1,\ldots,r-1\right\}   &
\rightarrow\left\{  0,1,\ldots,rp-1\right\}  ,\\
\left(  l,K\right)   &  \mapsto Kp+l
\end{align*}
}. This completes the proof of Lemma \ref{lem.pr-bij}.
\end{proof}

\begin{proof}
[Proof of Theorem \ref{thm.Z2}.]We have%
\begin{equation}
\alpha_{r}=\sum_{n=0}^{r-1}\dbinom{2n}{n}=\sum_{K=0}^{r-1}\dbinom{2K}{K}
\label{pf.thm.Z2.long.alphar=}%
\end{equation}
(here, we have renamed the summation index $n$ as $K$).

Lemma \ref{lem.pr-bij} shows that The map%
\begin{align*}
\left\{  0,1,\ldots,p-1\right\}  \times\left\{  0,1,\ldots,r-1\right\}   &
\rightarrow\left\{  0,1,\ldots,rp-1\right\}  ,\\
\left(  l,K\right)   &  \mapsto Kp+l
\end{align*}
is a bijection. Thus, we can substitute $Kp+l$ for $n$ in the sum $\sum
_{n\in\left\{  0,1,\ldots,rp-1\right\}  }\dbinom{2n}{n}$. We thus obtain%
\[
\sum_{n\in\left\{  0,1,\ldots,rp-1\right\}  }\dbinom{2n}{n}=\sum_{\left(
l,K\right)  \in\left\{  0,1,\ldots,p-1\right\}  \times\left\{  0,1,\ldots
,r-1\right\}  }\dbinom{2\left(  Kp+l\right)  }{Kp+l}.
\]

Now,%
\begin{align*}
&  \underbrace{\sum_{n=0}^{rp-1}}_{=\sum_{n\in\left\{  0,1,\ldots
,rp-1\right\}  }}\dbinom{2n}{n}\\
&  =\sum_{n\in\left\{  0,1,\ldots,rp-1\right\}  }\dbinom{2n}{n}%
=\underbrace{\sum_{\left(  l,K\right)  \in\left\{  0,1,\ldots,p-1\right\}
\times\left\{  0,1,\ldots,r-1\right\}  }}_{\substack{=\sum_{l\in\left\{
0,1,\ldots,p-1\right\}  }\sum_{K\in\left\{  0,1,\ldots,r-1\right\}  }%
\\=\sum_{K\in\left\{  0,1,\ldots,r-1\right\}  }\sum_{l\in\left\{
0,1,\ldots,p-1\right\}  }}}\underbrace{\dbinom{2\left(  Kp+l\right)  }{Kp+l}%
}_{\substack{=\dbinom{2Kp+2l}{Kp+l}\\\text{(since }2\left(  Kp+l\right)
=2Kp+2l\text{)}}}\\
&  =\underbrace{\sum_{K\in\left\{  0,1,\ldots,r-1\right\}  }}_{=\sum
_{K=0}^{r-1}}\underbrace{\sum_{l\in\left\{  0,1,\ldots,p-1\right\}  }}%
_{=\sum_{l=0}^{p-1}}\dbinom{2Kp+2l}{Kp+l}=\sum_{K=0}^{r-1}\underbrace{\sum
_{l=0}^{p-1}\dbinom{2Kp+2l}{Kp+l}}_{\substack{\equiv\dbinom{2K}{K}\eta
_{p}\operatorname{mod}p^{2}\\\text{(by Lemma \ref{lem.C1} (applied to
}N=2K\text{))}}}\\
&  \equiv\sum_{K=0}^{r-1}\dbinom{2K}{K}\eta_{p}=\eta_{p}\underbrace{\sum
_{K=0}^{r-1}\dbinom{2K}{K}}_{\substack{=\alpha_{r}\\\text{(by
(\ref{pf.thm.Z2.long.alphar=}))}}}=\eta_{p}\alpha_{r}\operatorname{mod}p^{2}.
\end{align*}
This proves Theorem \ref{thm.Z2}.
\end{proof}
\end{verlong}

\subsection{Proving Theorem \ref{thm.Z3}}

\begin{lemma}
\label{lem.A7}Let $p$ be an odd prime. Let $N\in\mathbb{Z}$ and $K\in
\mathbb{N}$. Then,%
\[
\sum_{l=0}^{p-1}\sum_{m=0}^{l}\left(  \dbinom{Np+l}{Kp+m}-\dbinom{N}{K}%
\dbinom{l}{m}\right)  \dbinom{l}{m}\equiv0\operatorname{mod}p^{2}.
\]

\end{lemma}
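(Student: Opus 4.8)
The plan is to reduce each binomial coefficient $\dbinom{Np+l}{Kp+m}$ modulo $p^{2}$ to an explicit combination of quantities I can already control, and then to sum. First I would expand $\dbinom{Np+l}{Kp+m}$ by the Vandermonde-type identity of Corollary \ref{cor.vandermonde.1} (applied with $x=Np$, $y=l$, $n=Kp+m$), obtaining $\dbinom{Np+l}{Kp+m}=\sum_{i=0}^{l}\dbinom{Np}{Kp+\left(m-i\right)}\dbinom{l}{i}$. In this sum the offset $m-i$ runs through $\left\{m-l,\ldots,m\right\}\subseteq\left\{-(p-1),\ldots,p-1\right\}$, so I can evaluate each inner binomial coefficient modulo $p^{2}$ using Theorem \ref{thm.bailey}: the $i=m$ term contributes $\dbinom{Np}{Kp}\equiv\dbinom{N}{K}$ (by Theorem \ref{thm.p2cong}), the terms with $i<m$ contribute via part \textbf{(a)}, and the terms with $i>m$ contribute via part \textbf{(b)}. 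Collecting these and reindexing by $t=\left\vert m-i\right\vert$ should give, for each $\left(l,m\right)$,
\[
\dbinom{Np+l}{Kp+m}-\dbinom{N}{K}\dbinom{l}{m}\equiv N\dbinom{N-1}{K}\sum_{t=1}^{m}\dbinom{p}{t}\dbinom{l}{m-t}+N\dbinom{N-1}{K-1}\sum_{t=1}^{l-m}\dbinom{p}{t}\dbinom{l}{m+t}\operatorname{mod}p^{2}.
\]

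Next I would multiply this congruence by the integer $\dbinom{l}{m}$ and sum over $0\le m\le l\le p-1$; since congruences modulo $p^{2}$ are preserved under multiplication by integers and under summation, the left-hand side becomes exactly the sum in the statement, while the right-hand side becomes $N\dbinom{N-1}{K}A+N\dbinom{N-1}{K-1}B$ for two explicit triple sums $A$ and $B$. The key computation is to show that $A\equiv0$ and $B\equiv0\operatorname{mod}p^{2}$ \emph{separately}. For each of them I would interchange the order of summation (summing $t$ before $m$ for fixed $l$) and then shift the inner index, so that both inner sums collapse to the single expression $\sum_{m}\dbinom{l}{m}\dbinom{l}{m+t}$; by Proposition \ref{prop.binom.symm} together with the Vandermonde convolution (Proposition \ref{prop.vandermonde.consequences}) this inner sum equals $\dbinom{2l}{l-t}$. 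Thus both $A$ and $B$ reduce to the same quantity $\sum_{l=0}^{p-1}\sum_{t=1}^{l}\dbinom{p}{t}\dbinom{2l}{l-t}$, which by Lemma \ref{lem.vandermonde.4} equals $\sum_{l=0}^{p-1}\bigl(\dbinom{p+2l}{l}-\dbinom{2l}{l}\bigr)$, and this is $\equiv0\operatorname{mod}p^{2}$ by Lemma \ref{lem.A6} (applied with $c=1$). Feeding this back, both correction terms vanish and the whole sum is $\equiv0\operatorname{mod}p^{2}$, as desired.

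The main obstacle I expect is bookkeeping rather than anything conceptual: keeping the three index ranges ($l$, $m$, and the Bailey offset $t$) mutually consistent through the interchange of summation and the index shift, and in particular verifying that the $i<m$ and $i>m$ contributions genuinely collapse to the \emph{same} inner sum $\dbinom{2l}{l-t}$, so that the two distinct factors $\dbinom{N-1}{K}$ and $\dbinom{N-1}{K-1}$ become irrelevant (each of $A$ and $B$ vanishing on its own, with no need to combine them via $\dbinom{N-1}{K}+\dbinom{N-1}{K-1}=\dbinom{N}{K}$). Care is also needed to confirm that every invocation of Theorem \ref{thm.bailey} is legitimate, i.e. that the offsets $m-i$ and $i-m$ all lie in $\left\{1,2,\ldots,p-1\right\}$, which follows directly from $0\le m\le l\le p-1$.
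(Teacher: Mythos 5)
Your proof is correct, but it takes a genuinely different route from the paper's. The paper first collapses the inner sum over $m$ using Corollary \ref{cor.vandermonde.2} (with $u=Np+l$, $w=Kp$, respectively $u=l$, $w=0$), so the double sum becomes $\sum_{l=0}^{p-1}\bigl(\dbinom{Np+2l}{Kp+l}-\dbinom{N}{K}\dbinom{2l}{l}\bigr)$, and then applies Lemma \ref{lem.A1} termwise followed by Lemma \ref{lem.A6}; the Bailey-type input there is part \textbf{(c)} of Theorem \ref{thm.bailey}, which merges the positive and negative offsets via Pascal's rule $\dbinom{N-1}{K}+\dbinom{N-1}{K-1}=\dbinom{N}{K}$. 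You instead expand each $\dbinom{Np+l}{Kp+m}$ first (Corollary \ref{cor.vandermonde.1}), apply Bailey's parts \textbf{(a)} and \textbf{(b)} termwise together with Theorem \ref{thm.p2cong}, and only then multiply by $\dbinom{l}{m}$ and collapse --- transposing the order of the two Vandermonde steps and never passing through the central coefficient $\dbinom{Np+2l}{Kp+l}$ or Lemma \ref{lem.A1}. What your route buys is the finer observation that the two correction sums $A$ and $B$ (weighted by $\dbinom{N-1}{K}$ and $\dbinom{N-1}{K-1}$ respectively) are literally the \emph{same} sum $\sum_{l=0}^{p-1}\sum_{t=1}^{l}\dbinom{p}{t}\dbinom{2l}{l-t}$ and vanish modulo $p^{2}$ individually (Lemma \ref{lem.vandermonde.4} plus Lemma \ref{lem.A6} with $c=1$), so Bailey \textbf{(c)} and Pascal's recurrence are never needed. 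The cost is heavier bookkeeping --- a triple sum, an interchange, and an index shift --- where the paper's argument is essentially two lines given its prior lemmas. Your details all check out: the offsets $\left\vert m-i\right\vert$ lie in $\left\{1,2,\ldots,p-1\right\}$ precisely because $0\leq m\leq l\leq p-1$, and the inner collapse $\sum_{j=0}^{l-t}\dbinom{l}{j}\dbinom{l}{j+t}=\dbinom{2l}{l-t}$ follows from Proposition \ref{prop.vandermonde.consequences} and Proposition \ref{prop.binom.symm} (or, alternatively, directly from Corollary \ref{cor.vandermonde.2} with $u=l$ and $w=t$).
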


\begin{vershort}
\begin{proof}
[Proof of Lemma \ref{lem.A7}.]We have%
\begin{align*}
&  \sum_{l=0}^{p-1}\sum_{m=0}^{l}\left(  \dbinom{Np+l}{Kp+m}-\dbinom{N}%
{K}\dbinom{l}{m}\right)  \dbinom{l}{m}\\
&  =\sum_{l=0}^{p-1}\underbrace{\sum_{m=0}^{l}\dbinom{Np+l}{Kp+m}\dbinom{l}%
{m}}_{\substack{=\dbinom{Np+2l}{Kp+l}\\\text{(by Corollary
\ref{cor.vandermonde.2},}\\\text{applied to }u=Np+l\text{ and }w=Kp\text{)}%
}}-\dbinom{N}{K}\sum_{l=0}^{p-1}\underbrace{\sum_{m=0}^{l}\dbinom{l}{m}%
\dbinom{l}{m}}_{\substack{=\dbinom{2l}{l}\\\text{(by Corollary
\ref{cor.vandermonde.2},}\\\text{applied to }u=l\text{ and }w=0\text{)}}}\\
&  =\sum_{l=0}^{p-1}\dbinom{Np+2l}{Kp+l}-\dbinom{N}{K}\sum_{l=0}^{p-1}%
\dbinom{2l}{l}=\sum_{l=0}^{p-1}\underbrace{\left(  \dbinom{Np+2l}%
{Kp+l}-\dbinom{N}{K}\dbinom{2l}{l}\right)  }_{\substack{\equiv N\dbinom{N}%
{K}\left(  \dbinom{p+2l}{l}-\dbinom{2l}{l}\right)  \operatorname{mod}%
p^{2}\\\text{(by Lemma \ref{lem.A1})}}}\\
&  \equiv N\dbinom{N}{K}\underbrace{\sum_{l=0}^{p-1}\left(  \dbinom{p+2l}%
{l}-\dbinom{2l}{l}\right)  }_{\substack{\equiv0\operatorname{mod}%
p^{2}\\\text{(by Lemma \ref{lem.A6}, applied to }c=1\text{)}}}\equiv
0\operatorname{mod}p^{2}.
\end{align*}
This proves Lemma \ref{lem.A7}.
\end{proof}
\end{vershort}

\begin{verlong}
\begin{proof}
[Proof of Lemma \ref{lem.A7}.]Lemma \ref{lem.A6} (applied to $c=1$) yields%
\[
\sum_{l=0}^{p-1}\left(  \dbinom{1p+2l}{l}-\dbinom{2l}{l}\right)
\equiv0\operatorname{mod}p^{2}.
\]
In view of $1p=p$, this rewrites as
\begin{equation}
\sum_{l=0}^{p-1}\left(  \dbinom{p+2l}{l}-\dbinom{2l}{l}\right)  \equiv
0\operatorname{mod}p^{2}. \label{pf.lem.A7.long.0}%
\end{equation}

Let $l\in\left\{  0,1,\ldots,p-1\right\}  $. Then, Corollary
\ref{cor.vandermonde.2} (applied to $u=Np+l$ and $w=Kp$) yields%
\begin{equation}
\sum_{m=0}^{l}\dbinom{Np+l}{Kp+m}\dbinom{l}{m}=\dbinom{Np+l+l}{Kp+l}%
=\dbinom{Np+2l}{Kp+l} \label{pf.lem.A7.long.1}%
\end{equation}
(since $l+l=2l$). But Corollary \ref{cor.vandermonde.2} (applied to $u=l$ and
$w=0$) yields%
\[
\sum_{m=0}^{l}\dbinom{l}{0+m}\dbinom{l}{m}=\dbinom{l+l}{0+l}=\dbinom{2l}{l}%
\]
(since $l+l=2l$ and $0+l=l$). Thus,%
\begin{equation}
\dbinom{2l}{l}=\sum_{m=0}^{l}\underbrace{\dbinom{l}{0+m}}_{\substack{=\dbinom
{l}{m}\\\text{(since }0+m=m\text{)}}}\dbinom{l}{m}=\sum_{m=0}^{l}\dbinom{l}%
{m}\dbinom{l}{m}. \label{pf.lem.A7.long.2}%
\end{equation}

Now,%
\begin{align}
&  \sum_{m=0}^{l}\underbrace{\left(  \dbinom{Np+l}{Kp+m}-\dbinom{N}{K}%
\dbinom{l}{m}\right)  \dbinom{l}{m}}_{=\dbinom{Np+l}{Kp+m}\dbinom{l}%
{m}-\dbinom{N}{K}\dbinom{l}{m}\dbinom{l}{m}}\nonumber\\
&  =\sum_{m=0}^{l}\left(  \dbinom{Np+l}{Kp+m}\dbinom{l}{m}-\dbinom{N}%
{K}\dbinom{l}{m}\dbinom{l}{m}\right) \nonumber\\
&  =\underbrace{\sum_{m=0}^{l}\dbinom{Np+l}{Kp+m}\dbinom{l}{m}}%
_{\substack{_{\substack{=\dbinom{Np+2l}{Kp+l}}}\\\text{(by
(\ref{pf.lem.A7.long.1}))}}}-\dbinom{N}{K}\underbrace{\sum_{m=0}^{l}\dbinom
{l}{m}\dbinom{l}{m}}_{\substack{=\dbinom{2l}{l}\\\text{(by
(\ref{pf.lem.A7.long.2}))}}}\nonumber\\
&  =\dbinom{Np+2l}{Kp+l}-\dbinom{N}{K}\dbinom{2l}{l}\nonumber\\
&  \equiv N\dbinom{N}{K}\left(  \dbinom{p+2l}{l}-\dbinom{2l}{l}\right)
\operatorname{mod}p^{2} \label{pf.lem.A7.long.4}%
\end{align}
(by Lemma \ref{lem.A1}).

Now, forget that we fixed $l$. We thus have proven (\ref{pf.lem.A7.long.4})
for each $l\in\left\{  0,1,\ldots,p-1\right\}  $.

We have%
\begin{align*}
&  \sum_{l=0}^{p-1}\underbrace{\sum_{m=0}^{l}\left(  \dbinom{Np+l}%
{Kp+m}-\dbinom{N}{K}\dbinom{l}{m}\right)  \dbinom{l}{m}}_{\substack{\equiv
N\dbinom{N}{K}\left(  \dbinom{p+2l}{l}-\dbinom{2l}{l}\right)
\operatorname{mod}p^{2}\\\text{(by (\ref{pf.lem.A7.long.4}))}}}\\
&  \equiv\sum_{l=0}^{p-1}N\dbinom{N}{K}\left(  \dbinom{p+2l}{l}-\dbinom{2l}%
{l}\right) \\
&  =N\dbinom{N}{K}\underbrace{\sum_{l=0}^{p-1}\left(  \dbinom{p+2l}{l}%
-\dbinom{2l}{l}\right)  }_{\substack{\equiv0\operatorname{mod}p^{2}\\\text{(by
(\ref{pf.lem.A7.long.0}))}}}\equiv N\dbinom{N}{K}0=0\operatorname{mod}p^{2}.
\end{align*}
This proves Lemma \ref{lem.A7}.
\end{proof}
\end{verlong}

\begin{lemma}
\label{lem.A8}Let $p$ be an odd prime. Let $N\in\mathbb{Z}$ and $K\in
\mathbb{N}$. Then,%
\begin{equation}
\sum_{l=0}^{p-1}\sum_{m=0}^{l}\dbinom{Np+l}{Kp+m}^{2}\equiv\dbinom{N}{K}%
^{2}\eta_{p}\operatorname{mod}p^{2}.\nonumber
\end{equation}

\end{lemma}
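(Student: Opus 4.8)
The plan is to linearize the square $\dbinom{Np+l}{Kp+m}^{2}$ and then reduce everything to results already established. First I would fix a pair $\left(l,m\right)$ with $l\in\left\{0,1,\ldots,p-1\right\}$ and $m\in\left\{0,1,\ldots,l\right\}$, and set $a=\dbinom{Np+l}{Kp+m}$ and $b=\dbinom{N}{K}\dbinom{l}{m}$. Since $m\leq l\leq p-1$, both $l$ and $m$ lie in $\left\{0,1,\ldots,p-1\right\}$, so Theorem \ref{thm.lucas} (applied with $c=l$ and $d=m$) gives $a\equiv b\operatorname{mod}p$; that is, $a-b$ is divisible by $p$. Hence Lemma \ref{lem.Aab} applies and yields $a^{2}-b^{2}\equiv2\left(a-b\right)b\operatorname{mod}p^{2}$, i.e.
\[
\dbinom{Np+l}{Kp+m}^{2}-\dbinom{N}{K}^{2}\dbinom{l}{m}^{2}\equiv2\dbinom{N}{K}\left(\dbinom{Np+l}{Kp+m}-\dbinom{N}{K}\dbinom{l}{m}\right)\dbinom{l}{m}\operatorname{mod}p^{2}.
\]

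Next I would sum this congruence over all $l\in\left\{0,1,\ldots,p-1\right\}$ and $m\in\left\{0,1,\ldots,l\right\}$. The crucial observation is that, after pulling out the constant factor $2\dbinom{N}{K}$, the right-hand side becomes exactly $2\dbinom{N}{K}$ times the double sum appearing in Lemma \ref{lem.A7}, which is $\equiv0\operatorname{mod}p^{2}$. Consequently the entire right-hand side vanishes modulo $p^{2}$, leaving $\sum_{l=0}^{p-1}\sum_{m=0}^{l}\dbinom{Np+l}{Kp+m}^{2}\equiv\dbinom{N}{K}^{2}\sum_{l=0}^{p-1}\sum_{m=0}^{l}\dbinom{l}{m}^{2}\operatorname{mod}p^{2}$.

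Finally I would evaluate the remaining double sum by elementary means. Corollary \ref{cor.vandermonde.2}, applied with $u=l$ and $w=0$, collapses the inner sum into $\sum_{m=0}^{l}\dbinom{l}{m}^{2}=\dbinom{2l}{l}$; then $\sum_{l=0}^{p-1}\dbinom{2l}{l}=\sum_{n=0}^{p-1}\dbinom{2n}{n}\equiv\eta_{p}\operatorname{mod}p^{2}$ by Theorem \ref{thm.Z1}, which produces $\dbinom{N}{K}^{2}\eta_{p}$ as desired. Since every ingredient is already in place, there is no serious obstacle here; the only points demanding care are verifying the index ranges so that Lucas's theorem legitimately delivers the mod-$p$ congruence $a\equiv b$ (which is precisely what allows Lemma \ref{lem.Aab} to upgrade it to a statement modulo $p^{2}$), and recognizing that the linear remainder term is exactly the content of Lemma \ref{lem.A7}.
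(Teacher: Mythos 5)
Your proposal is correct and follows essentially the same route as the paper's own proof: Lucas's theorem (Theorem \ref{thm.lucas}) plus Lemma \ref{lem.Aab} to linearize the square, Lemma \ref{lem.A7} to kill the cross term, and then Corollary \ref{cor.vandermonde.2} together with Theorem \ref{thm.Z1} to evaluate $\sum_{l}\sum_{m}\dbinom{l}{m}^{2}\equiv\eta_{p}$. The only cosmetic difference is that the paper states the linearization for all $m\in\left\{ 0,1,\ldots,p-1\right\}$ before restricting the summation range, whereas you fix $m\in\left\{ 0,1,\ldots,l\right\}$ from the start; both are valid.
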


\begin{vershort}
\begin{proof}
[Proof of Lemma \ref{lem.A8}.]Fix $l\in\left\{  0,1,\ldots,p-1\right\}  $ and
$m\in\left\{  0,1,\ldots,p-1\right\}  $. Then, Theorem \ref{thm.lucas}
(applied to $a=N$, $b=K$, $c=l$ and $d=m$) yields that $\dbinom{Np+l}%
{Kp+m}\equiv\dbinom{N}{K}\dbinom{l}{m}\operatorname{mod}p$. In other words,
$\dbinom{Np+l}{Kp+m}-\dbinom{N}{K}\dbinom{l}{m}$ is divisible by $p$. Hence,
Lemma \ref{lem.Aab} (applied to $a=\dbinom{Np+l}{Kp+m}$ and $b=\dbinom{N}%
{K}\dbinom{l}{m}$) shows that%
\begin{align}
&  \dbinom{Np+l}{Kp+m}^{2}-\left(  \dbinom{N}{K}\dbinom{l}{m}\right)
^{2}\nonumber\\
&  \equiv2\left(  \dbinom{Np+l}{Kp+m}-\dbinom{N}{K}\dbinom{l}{m}\right)
\dbinom{N}{K}\dbinom{l}{m}\operatorname{mod}p^{2}. \label{pf.lem.A8.1}%
\end{align}

Now, forget that we fixed $l$ and $m$. We thus have proven (\ref{pf.lem.A8.1})
for all $l\in\left\{  0,1,\ldots,p-1\right\}  $ and $m\in\left\{
0,1,\ldots,p-1\right\}  $. Now,%
\begin{align*}
&  \sum_{l=0}^{p-1}\sum_{m=0}^{l}\dbinom{Np+l}{Kp+m}^{2}-\sum_{l=0}^{p-1}%
\sum_{m=0}^{l}\left(  \dbinom{N}{K}\dbinom{l}{m}\right)  ^{2}\\
&  =\sum_{l=0}^{p-1}\sum_{m=0}^{l}\underbrace{\left(  \dbinom{Np+l}{Kp+m}%
^{2}-\left(  \dbinom{N}{K}\dbinom{l}{m}\right)  ^{2}\right)  }%
_{\substack{\equiv2\left(  \dbinom{Np+l}{Kp+m}-\dbinom{N}{K}\dbinom{l}%
{m}\right)  \dbinom{N}{K}\dbinom{l}{m}\operatorname{mod}p^{2}\\\text{(by
(\ref{pf.lem.A8.1}))}}}\\
&  \equiv2\dbinom{N}{K}\underbrace{\sum_{l=0}^{p-1}\sum_{m=0}^{l}\left(
\dbinom{Np+l}{Kp+m}-\dbinom{N}{K}\dbinom{l}{m}\right)  \dbinom{l}{m}%
}_{\substack{\equiv0\operatorname{mod}p^{2}\\\text{(by Lemma \ref{lem.A7})}%
}}\equiv0\operatorname{mod}p^{2}.
\end{align*}
Thus,%
\begin{align*}
\sum_{l=0}^{p-1}\sum_{m=0}^{l}\dbinom{Np+l}{Kp+m}^{2}  &  \equiv\sum
_{l=0}^{p-1}\sum_{m=0}^{l}\left(  \dbinom{N}{K}\dbinom{l}{m}\right)
^{2}=\dbinom{N}{K}^{2}\sum_{l=0}^{p-1}\underbrace{\sum_{m=0}^{l}\dbinom{l}%
{m}^{2}}_{\substack{=\sum_{m=0}^{l}\dbinom{l}{m}\dbinom{l}{m}=\dbinom{2l}%
{l}\\\text{(by Corollary \ref{cor.vandermonde.2},}\\\text{applied to
}u=l\text{ and }w=0\text{)}}}\\
&  =\dbinom{N}{K}^{2}\underbrace{\sum_{l=0}^{p-1}\dbinom{2l}{l}}%
_{\substack{=\sum_{n=0}^{p-1}\dbinom{2n}{n}\equiv\eta_{p}\operatorname{mod}%
p^{2}\\\text{(by Theorem \ref{thm.Z1})}}}\equiv\dbinom{N}{K}^{2}\eta
_{p}\operatorname{mod}p^{2}.
\end{align*}
This proves Lemma \ref{lem.A8}.
\end{proof}
\end{vershort}

\begin{verlong}
\begin{proof}
[Proof of Lemma \ref{lem.A8}.]We have%
\begin{align}
\sum_{l=0}^{p-1}\dbinom{2l}{l}  &  =\sum_{n=0}^{p-1}\dbinom{2n}{n}%
\ \ \ \ \ \ \ \ \ \ \left(
\begin{array}
[c]{c}%
\text{here, we have renamed the}\\
\text{summation index }l\text{ as }n
\end{array}
\right) \nonumber\\
&  \equiv\eta_{p}\operatorname{mod}p^{2}\ \ \ \ \ \ \ \ \ \ \left(  \text{by
Theorem \ref{thm.Z1}}\right)  . \label{pf.lem.A8.long.Z1}%
\end{align}

Fix $l\in\mathbb{N}$. Corollary \ref{cor.vandermonde.2} (applied to $u=l$ and
$w=0$) yields%
\[
\sum_{m=0}^{l}\dbinom{l}{0+m}\dbinom{l}{m}=\dbinom{l+l}{0+l}=\dbinom{2l}{l}%
\]
(since $l+l=2l$ and $0+l=l$). Thus,%
\begin{equation}
\dbinom{2l}{l}=\sum_{m=0}^{l}\underbrace{\dbinom{l}{0+m}}_{\substack{=\dbinom
{l}{m}\\\text{(since }0+m=m\text{)}}}\dbinom{l}{m}=\sum_{m=0}^{l}%
\underbrace{\dbinom{l}{m}\dbinom{l}{m}}_{=\dbinom{l}{m}^{2}}=\sum_{m=0}%
^{l}\dbinom{l}{m}^{2}. \label{pf.lem.A8.long.0}%
\end{equation}

Now, forget that we fixed $l$. We thus have proven (\ref{pf.lem.A8.long.0})
for each $l\in\mathbb{N}$.

Fix $l\in\left\{  0,1,\ldots,p-1\right\}  $ and $m\in\left\{  0,1,\ldots
,p-1\right\}  $. Then, Theorem \ref{thm.lucas} (applied to $a=N$, $b=K$, $c=l$
and $d=m$) yields that $\dbinom{Np+l}{Kp+m}\equiv\dbinom{N}{K}\dbinom{l}%
{m}\operatorname{mod}p$. In other words, $\dbinom{Np+l}{Kp+m}-\dbinom{N}%
{K}\dbinom{l}{m}$ is divisible by $p$. Hence, Lemma \ref{lem.Aab} (applied to
$a=\dbinom{Np+l}{Kp+m}$ and $b=\dbinom{N}{K}\dbinom{l}{m}$) shows that%
\begin{align}
&  \dbinom{Np+l}{Kp+m}^{2}-\left(  \dbinom{N}{K}\dbinom{l}{m}\right)
^{2}\nonumber\\
&  \equiv2\left(  \dbinom{Np+l}{Kp+m}-\dbinom{N}{K}\dbinom{l}{m}\right)
\dbinom{N}{K}\dbinom{l}{m}\operatorname{mod}p^{2}. \label{pf.lem.A8.long.1}%
\end{align}

Now, forget that we fixed $l$ and $m$. We thus have proven
(\ref{pf.lem.A8.long.1}) for all $l\in\left\{  0,1,\ldots,p-1\right\}  $ and
$m\in\left\{  0,1,\ldots,p-1\right\}  $. Now,%
\begin{align*}
&  \sum_{l=0}^{p-1}\sum_{m=0}^{l}\dbinom{Np+l}{Kp+m}^{2}-\sum_{l=0}^{p-1}%
\sum_{m=0}^{l}\left(  \dbinom{N}{K}\dbinom{l}{m}\right)  ^{2}\\
&  =\sum_{l=0}^{p-1}\underbrace{\left(  \sum_{m=0}^{l}\dbinom{Np+l}{Kp+m}%
^{2}-\sum_{m=0}^{l}\left(  \dbinom{N}{K}\dbinom{l}{m}\right)  ^{2}\right)
}_{=\sum_{m=0}^{l}\left(  \dbinom{Np+l}{Kp+m}^{2}-\left(  \dbinom{N}{K}%
\dbinom{l}{m}\right)  ^{2}\right)  }\\
&  =\sum_{l=0}^{p-1}\sum_{m=0}^{l}\underbrace{\left(  \dbinom{Np+l}{Kp+m}%
^{2}-\left(  \dbinom{N}{K}\dbinom{l}{m}\right)  ^{2}\right)  }%
_{\substack{\equiv2\left(  \dbinom{Np+l}{Kp+m}-\dbinom{N}{K}\dbinom{l}%
{m}\right)  \dbinom{N}{K}\dbinom{l}{m}\operatorname{mod}p^{2}\\\text{(by
(\ref{pf.lem.A8.long.1}))}}}\\
&  \equiv\sum_{l=0}^{p-1}\sum_{m=0}^{l}2\left(  \dbinom{Np+l}{Kp+m}-\dbinom
{N}{K}\dbinom{l}{m}\right)  \dbinom{N}{K}\dbinom{l}{m}\\
&  \equiv2\dbinom{N}{K}\underbrace{\sum_{l=0}^{p-1}\sum_{m=0}^{l}\left(
\dbinom{Np+l}{Kp+m}-\dbinom{N}{K}\dbinom{l}{m}\right)  \dbinom{l}{m}%
}_{\substack{\equiv0\operatorname{mod}p^{2}\\\text{(by Lemma \ref{lem.A7})}%
}}\\
&  \equiv2\dbinom{N}{K}\cdot0=0\operatorname{mod}p^{2}.
\end{align*}
Thus,%
\begin{align*}
\sum_{l=0}^{p-1}\sum_{m=0}^{l}\dbinom{Np+l}{Kp+m}^{2}  &  \equiv\sum
_{l=0}^{p-1}\sum_{m=0}^{l}\underbrace{\left(  \dbinom{N}{K}\dbinom{l}%
{m}\right)  ^{2}}_{=\dbinom{N}{K}^{2}\dbinom{l}{m}^{2}}=\sum_{l=0}^{p-1}%
\sum_{m=0}^{l}\dbinom{N}{K}^{2}\dbinom{l}{m}^{2}\\
&  =\dbinom{N}{K}^{2}\sum_{l=0}^{p-1}\underbrace{\sum_{m=0}^{l}\dbinom{l}%
{m}^{2}}_{\substack{=\dbinom{2l}{l}\\\text{(by (\ref{pf.lem.A8.long.0}))}%
}}=\dbinom{N}{K}^{2}\underbrace{\sum_{l=0}^{p-1}\dbinom{2l}{l}}%
_{\substack{\equiv\eta_{p}\operatorname{mod}p^{2}\\\text{(by
(\ref{pf.lem.A8.long.Z1}))}}}\equiv\dbinom{N}{K}^{2}\eta_{p}\operatorname{mod}%
p^{2}.
\end{align*}
This proves Lemma \ref{lem.A8}.
\end{proof}
\end{verlong}

\begin{lemma}
\label{lem.A9a}Let $p$ be a prime. Let $N\in\mathbb{Z}$ and $K\in\mathbb{Z}$.
Let $u$ and $v$ be two elements of $\left\{  0,1,\ldots,p-1\right\}  $
satisfying $u+v\geq p$. Then, $p\mid\dbinom{Np+u+v}{Kp+u}$.
\end{lemma}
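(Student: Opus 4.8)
The plan is to reduce this to Lucas's theorem (Theorem \ref{thm.lucas}) after rewriting the top argument so that its residue modulo $p$ becomes too small to support the bottom argument, forcing a vanishing binomial coefficient.

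First I would introduce $w=u+v-p$. Since $u,v\in\left\{0,1,\ldots,p-1\right\}$ and $u+v\geq p$, this $w$ is a nonnegative integer; moreover $w=u+v-p\leq\left(p-1\right)+\left(p-1\right)-p=p-2$, so that $w\in\left\{0,1,\ldots,p-1\right\}$. The crucial observation is that $v\leq p-1<p$ forces $w=u+\left(v-p\right)<u$. Rewriting $Np+u+v=\left(N+1\right)p+w$, I would then apply Theorem \ref{thm.lucas} (with $a=N+1$, $b=K$, $c=w$ and $d=u$, which is legitimate because $w,u\in\left\{0,1,\ldots,p-1\right\}$) to obtain
\[
\dbinom{Np+u+v}{Kp+u}=\dbinom{\left(N+1\right)p+w}{Kp+u}\equiv\dbinom{N+1}{K}\dbinom{w}{u}\operatorname{mod}p.
\]
Finally, since $w$ and $u$ are nonnegative integers satisfying $w<u$, Proposition \ref{prop.binom.0} (applied to $m=w$ and $n=u$) yields $\dbinom{w}{u}=0$. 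Hence the right-hand side above vanishes modulo $p$, so $\dbinom{Np+u+v}{Kp+u}\equiv0\operatorname{mod}p$, which is exactly the claim $p\mid\dbinom{Np+u+v}{Kp+u}$.

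I do not expect a genuine obstacle here: the entire content is the elementary inequality $u+v-p<u$, which drives the relevant ``base-$p$ digit'' of the numerator below that of the denominator. The only points requiring care are verifying the membership $w\in\left\{0,1,\ldots,p-1\right\}$ that licenses the use of Theorem \ref{thm.lucas}, and checking that $w$ and $u$ are both nonnegative so that Proposition \ref{prop.binom.0} genuinely applies; both are immediate from the bounds on $u$ and $v$.
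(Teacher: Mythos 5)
Your proof is correct and is essentially identical to the paper's own argument: the paper also writes $u+v=p+c$ (your $w$), uses $v<p$ to get $c<u$, applies Lucas's theorem to $\dbinom{(N+1)p+c}{Kp+u}$, and concludes via Proposition \ref{prop.binom.0} that $\dbinom{c}{u}=0$. No gaps; the verification that $w\in\left\{0,1,\ldots,p-1\right\}$ and $w<u$ is exactly the care the paper takes as well.
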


\begin{vershort}
\begin{proof}
[Proof of Lemma \ref{lem.A9a}.]We have $u+v\geq p$. Thus, $u+v=p+c$ for some
$c\in\mathbb{N}$. Consider this $c$. From $v\in\left\{  0,1,\ldots
,p-1\right\}  $, we obtain $v<p$. Thus, $c+p=p+c=u+\underbrace{v}_{<p}<u+p$,
so that $c<u\leq p-1$ (since $u\in\left\{  0,1,\ldots,p-1\right\}  $). Thus,
$c\in\left\{  0,1,\ldots,p-1\right\}  $ (since $c\in\mathbb{N}$). Also, $c<u$.
Hence, Proposition \ref{prop.binom.0} (applied to $m=c$ and $n=u$) yields
$\dbinom{c}{u}=0$.

Now, $u+v=p+c$, so that $Np+u+v=Np+p+c=\left(  N+1\right)  p+c$. Hence,%
\begin{align*}
\dbinom{Np+u+v}{Kp+u}  &  =\dbinom{\left(  N+1\right)  p+c}{Kp+u}\equiv
\dbinom{N+1}{K}\underbrace{\dbinom{c}{u}}_{=0}\\
&  \ \ \ \ \ \ \ \ \ \ \left(  \text{by Theorem \ref{thm.lucas}, applied to
}a=N+1\text{, }b=K\text{ and }d=u\right) \\
&  =0\operatorname{mod}p.
\end{align*}
In other words, $p\mid\dbinom{Np+u+v}{Kp+u}$. This proves Lemma \ref{lem.A9a}.
\end{proof}
\end{vershort}

\begin{verlong}
\begin{proof}
[Proof of Lemma \ref{lem.A9a}.]We have $u+v\geq p$. Thus, $u+v-p\in\mathbb{N}%
$. Hence, we can define $c\in\mathbb{N}$ by $c=u+v-p$. Consider this $c$.

From $c=u+v-p$, we obtain $c+p=u+v$. From $u\in\left\{  0,1,\ldots
,p-1\right\}  $, we obtain $u\leq p-1<p$. From $v\in\left\{  0,1,\ldots
,p-1\right\}  $, we obtain $v\leq p-1<p$. Thus, $c+p=u+\underbrace{v}%
_{<p}<u+p$. Subtracting $p$ from both sides of this inequality, we obtain
$c<u\leq p-1$. Thus, $c\in\left\{  0,1,\ldots,p-1\right\}  $ (since
$c\in\mathbb{N}$). Also, $c<u$. Hence, Proposition \ref{prop.binom.0} (applied
to $c$ and $u$ instead of $m$ and $n$) yields $\dbinom{c}{u}=0$ (since
$u\in\left\{  0,1,\ldots,p-1\right\}  \subseteq\mathbb{N}$).

Now, $Np+\underbrace{u+v}_{=c+p}=Np+c+p=\left(  N+1\right)  p+c$. Thus,%
\begin{align*}
\dbinom{Np+u+v}{Kp+u}  &  =\dbinom{\left(  N+1\right)  p+c}{Kp+u}\\
&  \equiv\dbinom{N+1}{K}\underbrace{\dbinom{c}{u}}_{=0}%
\ \ \ \ \ \ \ \ \ \ \left(
\begin{array}
[c]{c}%
\text{by Theorem \ref{thm.lucas} (applied to }a=N+1\text{,}\\
b=K\text{ and }d=u\text{)}%
\end{array}
\right) \\
&  =0\operatorname{mod}p.
\end{align*}
In other words, $p\mid\dbinom{Np+u+v}{Kp+u}$. This proves Lemma \ref{lem.A9a}.
\end{proof}
\end{verlong}

\begin{lemma}
\label{lem.A9}Let $p$ be an odd prime. Let $N\in\mathbb{Z}$ and $K\in
\mathbb{N}$. Then,%
\begin{equation}
\sum_{u=0}^{p-1}\sum_{v=0}^{p-1}\dbinom{Np+u+v}{Kp+u}^{2}\equiv\dbinom{N}%
{K}^{2}\eta_{p}\operatorname{mod}p^{2}.\nonumber
\end{equation}

\end{lemma}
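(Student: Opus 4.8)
The plan is to reduce this full-square double sum to the triangular double sum of Lemma \ref{lem.A8} by splitting the index region $\{0,1,\ldots,p-1\}^2$ according to whether $u+v\le p-1$ or $u+v\ge p$. The point is that Lemma \ref{lem.A9a} is precisely tailored to kill the second region, while the first region matches Lemma \ref{lem.A8} exactly.

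First I would write
\[
\sum_{u=0}^{p-1}\sum_{v=0}^{p-1}\dbinom{Np+u+v}{Kp+u}^{2}
=\sum_{\substack{u,v\in\left\{  0,1,\ldots,p-1\right\}  ;\\u+v\le p-1}}\dbinom{Np+u+v}{Kp+u}^{2}
+\sum_{\substack{u,v\in\left\{  0,1,\ldots,p-1\right\}  ;\\u+v\ge p}}\dbinom{Np+u+v}{Kp+u}^{2}.
\]
For the second (off-diagonal) sum, I would invoke Lemma \ref{lem.A9a}: every pair $\left(  u,v\right)  $ with $u,v\in\left\{  0,1,\ldots,p-1\right\}  $ and $u+v\ge p$ satisfies $p\mid\dbinom{Np+u+v}{Kp+u}$, and therefore $p^{2}\mid\dbinom{Np+u+v}{Kp+u}^{2}$. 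Hence each summand of the off-diagonal sum is divisible by $p^{2}$, so that this entire sum is $\equiv0\operatorname{mod}p^{2}$ and drops out of the congruence.

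For the first (diagonal) sum, I would substitute $l=u+v$ and $m=u$ (so that $v=l-m$). As $\left(  u,v\right)  $ ranges over all pairs in $\left\{  0,1,\ldots,p-1\right\}  ^{2}$ with $u+v\le p-1$, the pair $\left(  l,m\right)  $ ranges bijectively over all $\left(  l,m\right)  $ with $l\in\left\{  0,1,\ldots,p-1\right\}  $ and $m\in\left\{  0,1,\ldots,l\right\}  $; moreover $\dbinom{Np+u+v}{Kp+u}=\dbinom{Np+l}{Kp+m}$. Thus the diagonal sum equals
\[
\sum_{l=0}^{p-1}\sum_{m=0}^{l}\dbinom{Np+l}{Kp+m}^{2},
\]
which is $\equiv\dbinom{N}{K}^{2}\eta_{p}\operatorname{mod}p^{2}$ by Lemma \ref{lem.A8}. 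Combining the two contributions then yields $\sum_{u=0}^{p-1}\sum_{v=0}^{p-1}\dbinom{Np+u+v}{Kp+u}^{2}\equiv\dbinom{N}{K}^{2}\eta_{p}\operatorname{mod}p^{2}$, as desired.

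The only delicate point is the reindexing step: I would check carefully that the constraint $u+v\le p-1$ (together with $u,v\in\left\{  0,1,\ldots,p-1\right\}  $) is equivalent to $0\le m\le l\le p-1$ under $l=u+v$, $m=u$, with no redundant or missing pairs. Once this bijection is verified, essentially no computation remains, since Lemma \ref{lem.A9a} converts a single factor of $p$ into the required $p^{2}$ upon squaring, and Lemma \ref{lem.A8} supplies the diagonal value directly.
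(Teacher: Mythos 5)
Your proposal is correct and follows essentially the same route as the paper's own proof: both arguments discard the region $u+v\geq p$ using Lemma \ref{lem.A9a} (whose single factor of $p$ becomes $p^{2}$ upon squaring) and identify the remaining region with the triangular sum of Lemma \ref{lem.A8} via the substitution $l=u+v$, $m=u$. The only cosmetic difference is that the paper performs the split row by row (fixing $u$ and cutting the $v$-sum at $v=p-u$, then interchanging the order of summation), whereas you split the full two-dimensional index region at once; the underlying bijection and key lemmas are identical.
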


\begin{vershort}
\begin{proof}
[Proof of Lemma \ref{lem.A9}.]If $u$ and $v$ are two elements of $\left\{
0,1,\ldots,p-1\right\}  $ satisfying $v\geq p-u$, then%
\begin{equation}
\dbinom{Np+u+v}{Kp+u}^{2}\equiv0\operatorname{mod}p^{2} \label{pf.lem.A9.1}%
\end{equation}
\footnote{\textit{Proof of (\ref{pf.lem.A9.1}):} Let $u$ and $v$ be two
elements of $\left\{  0,1,\ldots,p-1\right\}  $ satisfying $v\geq p-u$. From
$v\geq p-u$, we obtain $u+v\geq p$. Thus, Lemma \ref{lem.A9a} yields
$p\mid\dbinom{Np+u+v}{Kp+u}$. Hence, $p^{2}\mid\dbinom{Np+u+v}{Kp+u}^{2}$.
This proves (\ref{pf.lem.A9.1}).}.

Hence, any $u\in\left\{  0,1,\ldots,p-1\right\}  $ satisfies%
\begin{align*}
\sum_{v=0}^{p-1}\dbinom{Np+u+v}{Kp+u}^{2}  &  =\sum_{v=0}^{p-u-1}%
\dbinom{Np+u+v}{Kp+u}^{2}+\sum_{v=p-u}^{p-1}\underbrace{\dbinom{Np+u+v}%
{Kp+u}^{2}}_{\substack{\equiv0\operatorname{mod}p^{2}\\\text{(by
(\ref{pf.lem.A9.1}))}}}\\
&  \ \ \ \ \ \ \ \ \ \ \left(  \text{here, we have split the sum at
}v=p-u\right) \\
&  \equiv\sum_{v=0}^{p-u-1}\dbinom{Np+u+v}{Kp+u}^{2}=\sum_{l=u}^{p-1}%
\dbinom{Np+l}{Kp+u}^{2}\operatorname{mod}p^{2}%
\end{align*}
(here, we have substituted $l$ for $u+v$ in the sum). Summing up these
congruences for all $u\in\left\{  0,1,\ldots,p-1\right\}  $, we obtain%
\begin{align*}
&  \sum_{u=0}^{p-1}\sum_{v=0}^{p-1}\dbinom{Np+u+v}{Kp+u}^{2}\\
&  \equiv\underbrace{\sum_{u=0}^{p-1}\sum_{l=u}^{p-1}}_{=\sum_{l=0}^{p-1}%
\sum_{u=0}^{l}}\dbinom{Np+l}{Kp+u}^{2}=\sum_{l=0}^{p-1}\sum_{u=0}^{l}%
\dbinom{Np+l}{Kp+u}^{2}=\sum_{l=0}^{p-1}\sum_{m=0}^{l}\dbinom{Np+l}{Kp+m}%
^{2}\\
&  \ \ \ \ \ \ \ \ \ \ \left(  \text{here, we have renamed the index }u\text{
as }m\text{ in the second sum}\right) \\
&  \equiv\dbinom{N}{K}^{2}\eta_{p}\operatorname{mod}p^{2}%
\end{align*}
(by Lemma \ref{lem.A8}). This proves Lemma \ref{lem.A9}.
\end{proof}
\end{vershort}

\begin{verlong}
\begin{proof}
[Proof of Lemma \ref{lem.A9}.]If $u\in\left\{  0,1,\ldots,p-1\right\}  $ and
$l\in\mathbb{N}$ are such that $l\leq u-1$, then%
\begin{equation}
\dbinom{Np+l}{Kp+u}^{2}\equiv0\operatorname{mod}p^{2} \label{pf.lem.A9.long.3}%
\end{equation}
\footnote{\textit{Proof of (\ref{pf.lem.A9.long.3}):} Let $l\in\mathbb{N}$ be
such that $l\leq u-1$. Thus, $l\leq u-1<u\leq p-1$ (since $u\in\left\{
0,1,\ldots,p-1\right\}  $). From $l<u$, we conclude that $u-l>0$, so that
$u-l\geq1$ (since $u-l$ is an integer). Combined with $u-\underbrace{l}%
_{\geq0}\leq u-0=u\leq p-1$, this yields $1\leq u-l\leq p-1$. Hence,
$u-l\in\left\{  1,2,\ldots,p-1\right\}  $. Therefore, $p-\left(  u-l\right)
\in\left\{  1,2,\ldots,p-1\right\}  \subseteq\left\{  1,2,\ldots,p\right\}  $.
Also, $u+\left(  p-\left(  u-l\right)  \right)  =p+\underbrace{l}_{\geq0}\geq
p$.
\par
Hence, Lemma \ref{lem.A9a} (applied to $N-1$ and $p-\left(  u-l\right)  $
instead of $N$ and $v$) yields $p\mid\dbinom{\left(  N-1\right)  p+u+\left(
p-\left(  u-l\right)  \right)  }{Kp+u}=\dbinom{Np+l}{Kp+u}$ (since $\left(
N-1\right)  p+u+\left(  p-\left(  u-l\right)  \right)  =Np+l$). In other
words, there exists some $z\in\mathbb{Z}$ such that $\dbinom{Np+l}{Kp+u}=zp$.
Consider this $z$. From $\dbinom{Np+l}{Kp+u}=zp$, we obtain%
\[
\dbinom{Np+l}{Kp+u}^{2}=\left(  zp\right)  ^{2}=z^{2}p^{2}\equiv
0\operatorname{mod}p^{2}.
\]
This proves (\ref{pf.lem.A9.long.3}).}.

Fix $u\in\left\{  0,1,\ldots,p-1\right\}  $. Thus, $u\leq p-1$.

If $v\in\left\{  0,1,\ldots,p-1\right\}  $ is such that $v\geq p-u$, then%
\begin{equation}
\dbinom{Np+u+v}{Kp+u}^{2}\equiv0\operatorname{mod}p^{2}
\label{pf.lem.A9.long.1}%
\end{equation}
\footnote{\textit{Proof of (\ref{pf.lem.A9.long.1}):} Let $v\in\left\{
0,1,\ldots,p-1\right\}  $ be such that $v\geq p-u$. From $v\geq p-u$, we
obtain $u+v\geq p$. Thus, Lemma \ref{lem.A9a} yields $p\mid\dbinom
{Np+u+v}{Kp+u}$. In other words, there exists some $z\in\mathbb{Z}$ such that
$\dbinom{Np+u+v}{Kp+u}=zp$. Consider this $z$. From $\dbinom{Np+u+v}{Kp+u}%
=zp$, we obtain%
\[
\dbinom{Np+u+v}{Kp+u}^{2}=\left(  zp\right)  ^{2}=z^{2}p^{2}\equiv
0\operatorname{mod}p^{2}.
\]
This proves (\ref{pf.lem.A9.long.1}).}.

But $u\in\left\{  0,1,\ldots,p-1\right\}  $, so that $p-u\in\left\{
1,2,\ldots,p\right\}  \subseteq\left\{  0,1,\ldots,p\right\}  $. Hence, $0\leq
p-u\leq p$. Thus, we can split the sum $\sum_{v=0}^{p-1}\dbinom{Np+u+v}%
{Kp+u}^{2}$ at $v=p-u$. We thus obtain
\begin{align}
\sum_{v=0}^{p-1}\dbinom{Np+u+v}{Kp+u}^{2}  &  =\sum_{v=0}^{p-u-1}%
\dbinom{Np+u+v}{Kp+u}^{2}+\sum_{v=p-u}^{p-1}\underbrace{\dbinom{Np+u+v}%
{Kp+u}^{2}}_{\substack{\equiv0\operatorname{mod}p^{2}\\\text{(by
(\ref{pf.lem.A9.long.1})}\\\text{(since }v\geq p-u\text{))}}}\nonumber\\
&  \equiv\sum_{v=0}^{p-u-1}\dbinom{Np+u+v}{Kp+u}^{2}+\underbrace{\sum
_{v=p-u}^{p-1}0}_{=0}=\sum_{v=0}^{p-u-1}\dbinom{Np+u+v}{Kp+u}^{2}\nonumber\\
&  =\sum_{l=u}^{p-1}\dbinom{Np+l}{Kp+u}^{2}\operatorname{mod}p^{2}
\label{pf.lem.A9.long.2}%
\end{align}
(here, we have substituted $l$ for $u+v$ in the sum).

On the other hand, $u\in\left\{  0,1,\ldots,p-1\right\}  \subseteq\left\{
0,1,\ldots,p\right\}  $. Thus, $0\leq u\leq p$. Hence, we can split the sum
$\sum_{l=0}^{p-1}\dbinom{Np+l}{Kp+u}^{2}$ at $l=u$. We thus obtain%
\begin{align}
\sum_{l=0}^{p-1}\dbinom{Np+l}{Kp+u}^{2}  &  =\sum_{l=0}^{u-1}%
\underbrace{\dbinom{Np+l}{Kp+u}^{2}}_{\substack{\equiv0\operatorname{mod}%
p^{2}\\\text{(by (\ref{pf.lem.A9.long.3})}\\\text{(since }l\leq u-1\text{))}%
}}+\sum_{l=u}^{p-1}\dbinom{Np+l}{Kp+u}^{2}\nonumber\\
&  \equiv\underbrace{\sum_{l=0}^{u-1}0}_{=0}+\sum_{l=u}^{p-1}\dbinom
{Np+l}{Kp+u}^{2}=\sum_{l=u}^{p-1}\dbinom{Np+l}{Kp+u}^{2}\nonumber\\
&  \equiv\sum_{v=0}^{p-1}\dbinom{Np+u+v}{Kp+u}^{2}\operatorname{mod}%
p^{2}\ \ \ \ \ \ \ \ \ \ \left(  \text{by (\ref{pf.lem.A9.long.2})}\right)  .
\label{pf.lem.A9.long.4}%
\end{align}

Now, forget that we fixed $u$. We thus have proven (\ref{pf.lem.A9.long.4})
for each $u\in\left\{  0,1,\ldots,p-1\right\}  $.

Now,%
\begin{align}
\sum_{u=0}^{p-1}\underbrace{\sum_{v=0}^{p-1}\dbinom{Np+u+v}{Kp+u}^{2}%
}_{\substack{\equiv\sum_{l=0}^{p-1}\dbinom{Np+l}{Kp+u}^{2}\operatorname{mod}%
p^{2}\\\text{(by (\ref{pf.lem.A9.long.4}))}}}  &  \equiv\underbrace{\sum
_{u=0}^{p-1}\sum_{l=0}^{p-1}}_{=\sum_{l=0}^{p-1}\sum_{u=0}^{p-1}}\dbinom
{Np+l}{Kp+u}^{2}\nonumber\\
&  =\sum_{l=0}^{p-1}\sum_{u=0}^{p-1}\dbinom{Np+l}{Kp+u}^{2}\operatorname{mod}%
p^{2}. \label{pf.lem.A9.long.6}%
\end{align}

Now, let us fix $l\in\left\{  0,1,\ldots,p-1\right\}  $. Then, $l\in\left\{
0,1,\ldots,p-1\right\}  $. Hence, we can split the sum $\sum_{u=0}%
^{p-1}\dbinom{Np+l}{Kp+u}^{2}$ at $u=l$. We thus obtain%
\begin{align}
\sum_{u=0}^{p-1}\dbinom{Np+l}{Kp+u}^{2}  &  =\sum_{u=0}^{l}\dbinom{Np+l}%
{Kp+u}^{2}+\sum_{u=l+1}^{p-1}\underbrace{\dbinom{Np+l}{Kp+u}^{2}%
}_{\substack{\equiv0\operatorname{mod}p^{2}\\\text{(by (\ref{pf.lem.A9.long.3}%
)}\\\text{(since }l\leq u-1\text{ (since }u\geq l+1\text{)))}}}\nonumber\\
&  \equiv\sum_{u=0}^{l}\dbinom{Np+l}{Kp+u}^{2}+\underbrace{\sum_{u=l+1}%
^{p-1}0}_{=0}=\sum_{u=0}^{l}\dbinom{Np+l}{Kp+u}^{2}\nonumber\\
&  =\sum_{m=0}^{l}\dbinom{Np+l}{Kp+m}^{2}\operatorname{mod}p^{2}
\label{pf.lem.A9.long.9}%
\end{align}
(here, we have renamed the index $u$ as $m$ in the sum).

Now, forget that we fixed $l$. We thus have proven (\ref{pf.lem.A9.long.9})
for each $l\in\left\{  0,1,\ldots,p-1\right\}  $.

Now, (\ref{pf.lem.A9.long.6}) becomes%
\begin{align*}
\sum_{u=0}^{p-1}\sum_{v=0}^{p-1}\dbinom{Np+u+v}{Kp+u}^{2}  &  \equiv\sum
_{l=0}^{p-1}\underbrace{\sum_{u=0}^{p-1}\dbinom{Np+l}{Kp+u}^{2}}%
_{\substack{\equiv\sum_{m=0}^{l}\dbinom{Np+l}{Kp+m}^{2}\operatorname{mod}%
p^{2}\\\text{(by (\ref{pf.lem.A9.long.9}))}}}\\
&  \equiv\sum_{l=0}^{p-1}\sum_{m=0}^{l}\dbinom{Np+l}{Kp+m}^{2}\equiv\dbinom
{N}{K}^{2}\eta_{p}\operatorname{mod}p^{2}%
\end{align*}
(by Lemma \ref{lem.A8}). This proves Lemma \ref{lem.A9}.
\end{proof}
\end{verlong}

\begin{vershort}
\begin{proof}
[\textit{Proof of Theorem~\ref{thm.Z3}.}]First, let us observe that%
\begin{align}
\epsilon_{r,s}  &  =\sum_{m=0}^{r-1}\sum_{n=0}^{s-1}\dbinom{n+m}{m}^{2}%
=\sum_{n=0}^{s-1}\sum_{m=0}^{r-1}\dbinom{n+m}{m}^{2}=\sum_{K=0}^{s-1}%
\sum_{L=0}^{r-1}\dbinom{K+L}{L}^{2}\nonumber\\
&  =\sum_{K=0}^{s-1}\sum_{L=0}^{r-1}\dbinom{K+L}{K}^{2} \label{pf.thm.Z3.ers}%
\end{align}
(since Proposition \ref{prop.binom.symm} yields $\dbinom{K+L}{L}=\dbinom
{K+L}{K}$ for all $K\in\mathbb{N}$ and $L\in\mathbb{N}$).

Each $n\in\mathbb{N}$ satisfies%
\[
\sum_{m=0}^{sp-1}\dbinom{n+m}{m}^{2}=\sum_{u=0}^{p-1}\sum_{K=0}^{s-1}%
\dbinom{n+Kp+u}{Kp+u}^{2}%
\]
(here, we have substituted $Kp+u$ for $m$ in the sum, since the map%
\begin{align*}
\left\{  0,1,\ldots,p-1\right\}  \times\left\{  0,1,\ldots,s-1\right\}   &
\rightarrow\left\{  0,1,\ldots,sp-1\right\}  ,\\
\left(  u,K\right)   &  \mapsto Kp+u
\end{align*}
is a bijection). Summing up this equality over all $n\in\left\{
0,1,\ldots,rp-1\right\}  $, we obtain%
\begin{align*}
\sum_{n=0}^{rp-1}\sum_{m=0}^{sp-1}\dbinom{n+m}{m}^{2}  &  =\sum_{n=0}%
^{rp-1}\sum_{u=0}^{p-1}\sum_{K=0}^{s-1}\dbinom{n+Kp+u}{Kp+u}^{2}\\
&  =\sum_{v=0}^{p-1}\sum_{L=0}^{r-1}\sum_{u=0}^{p-1}\sum_{K=0}^{s-1}%
\dbinom{Lp+v+Kp+u}{Kp+u}^{2}%
\end{align*}
(here, we have substituted $Lp+v$ for $n$ in the sum, since the map%
\begin{align*}
\left\{  0,1,\ldots,p-1\right\}  \times\left\{  0,1,\ldots,r-1\right\}   &
\rightarrow\left\{  0,1,\ldots,rp-1\right\}  ,\\
\left(  v,L\right)   &  \mapsto Lp+v
\end{align*}
is a bijection).

Thus,%
\begin{align*}
\sum_{n=0}^{rp-1}\sum_{m=0}^{sp-1}\dbinom{n+m}{m}^{2}  &  =\underbrace{\sum
_{v=0}^{p-1}\sum_{L=0}^{r-1}\sum_{u=0}^{p-1}\sum_{K=0}^{s-1}}_{=\sum
_{K=0}^{s-1}\sum_{L=0}^{r-1}\sum_{u=0}^{p-1}\sum_{v=0}^{p-1}}%
\underbrace{\dbinom{Lp+v+Kp+u}{Kp+u}^{2}}_{=\dbinom{\left(  K+L\right)
p+u+v}{Kp+u}^{2}}\\
&  =\sum_{K=0}^{s-1}\sum_{L=0}^{r-1}\underbrace{\sum_{u=0}^{p-1}\sum
_{v=0}^{p-1}\dbinom{\left(  K+L\right)  p+u+v}{Kp+u}^{2}}_{\substack{\equiv
\dbinom{K+L}{K}^{2}\eta_{p}\operatorname{mod}p^{2}\\\text{(by Lemma
\ref{lem.A9}, applied to }N=K+L\text{)}}}\\
&  \equiv\underbrace{\sum_{K=0}^{s-1}\sum_{L=0}^{r-1}\dbinom{K+L}{K}^{2}%
}_{\substack{=\epsilon_{r,s}\\\text{(by (\ref{pf.thm.Z3.ers}))}}}\eta
_{p}=\epsilon_{r,s}\eta_{p}=\eta_{p}\epsilon_{r,s}\operatorname{mod}p^{2}.
\end{align*}
This proves Theorem~\ref{thm.Z3}.
\end{proof}
\end{vershort}

\begin{verlong}
\begin{proof}
[\textit{Proof of Theorem~\ref{thm.Z3}.}]First, let us observe that every
$K\in\mathbb{N}$ and $L\in\mathbb{N}$ satisfy%
\begin{equation}
\dbinom{K+L}{L}^{2}=\dbinom{K+L}{K}^{2} \label{pf.thm.Z3.long.K+L}%
\end{equation}
\footnote{\textit{Proof of (\ref{pf.thm.Z3.long.K+L}):} Let $K\in\mathbb{N}$
and $L\in\mathbb{N}$. Thus, $K+L\in\mathbb{N}$. Also, $\underbrace{K}_{\geq
0}+L\geq L$. Thus, Proposition \ref{prop.binom.symm} (applied to $K+L$ and $L$
instead of $m$ and $n$) yields $\dbinom{K+L}{L}=\dbinom{K+L}{\left(
K+L\right)  -L}=\dbinom{K+L}{K}$ (since $\left(  K+L\right)  -L=K$). Hence,
$\dbinom{K+L}{L}^{2}=\dbinom{K+L}{K}^{2}$. This proves
(\ref{pf.thm.Z3.long.K+L}).}.

Now,%
\begin{align}
\epsilon_{r,s}  &  =\underbrace{\sum_{m=0}^{r-1}\sum_{n=0}^{s-1}}_{=\sum
_{n=0}^{s-1}\sum_{m=0}^{r-1}}\dbinom{n+m}{m}^{2}=\sum_{n=0}^{s-1}%
\underbrace{\sum_{m=0}^{r-1}\dbinom{n+m}{m}^{2}}_{\substack{=\sum_{L=0}%
^{r-1}\dbinom{n+L}{L}^{2}\\\text{(here, we have renamed the}\\\text{summation
index }m\text{ as }L\text{)}}}=\sum_{n=0}^{s-1}\sum_{L=0}^{r-1}\dbinom{n+L}%
{L}^{2}\nonumber\\
&  =\sum_{K=0}^{s-1}\sum_{L=0}^{r-1}\underbrace{\dbinom{K+L}{L}^{2}%
}_{\substack{=\dbinom{K+L}{K}^{2}\\\text{(by (\ref{pf.thm.Z3.long.K+L}))}%
}}\ \ \ \ \ \ \ \ \ \ \left(
\begin{array}
[c]{c}%
\text{here, we have renamed the}\\
\text{summation index }n\text{ as }K
\end{array}
\right) \nonumber\\
&  =\sum_{K=0}^{s-1}\sum_{L=0}^{r-1}\dbinom{K+L}{K}^{2}.
\label{pf.thm.Z3.long.ers}%
\end{align}

Now, let $n\in\mathbb{Z}$. Lemma \ref{lem.pr-bij} (applied to $s$ instead of
$r$) shows that the map%
\begin{align*}
\left\{  0,1,\ldots,p-1\right\}  \times\left\{  0,1,\ldots,s-1\right\}   &
\rightarrow\left\{  0,1,\ldots,sp-1\right\}  ,\\
\left(  l,K\right)   &  \mapsto Kp+l
\end{align*}
is well-defined and is a bijection. Thus, we can substitute $Kp+l$ for $m$ in
the sum $\sum_{m\in\left\{  0,1,\ldots,sp-1\right\}  }\dbinom{n+m}{m}^{2}$. We
thus obtain%
\[
\sum_{m\in\left\{  0,1,\ldots,sp-1\right\}  }\dbinom{n+m}{m}^{2}=\sum_{\left(
l,K\right)  \in\left\{  0,1,\ldots,p-1\right\}  \times\left\{  0,1,\ldots
,s-1\right\}  }\dbinom{n+Kp+l}{Kp+l}^{2}.
\]

We have%
\begin{align}
&  \underbrace{\sum_{m=0}^{sp-1}}_{=\sum_{m\in\left\{  0,1,\ldots
,sp-1\right\}  }}\dbinom{n+m}{m}^{2}\nonumber\\
&  =\sum_{m\in\left\{  0,1,\ldots,sp-1\right\}  }\dbinom{n+m}{m}%
^{2}=\underbrace{\sum_{\left(  l,K\right)  \in\left\{  0,1,\ldots,p-1\right\}
\times\left\{  0,1,\ldots,s-1\right\}  }}_{=\sum_{l\in\left\{  0,1,\ldots
,p-1\right\}  }\sum_{K\in\left\{  0,1,\ldots,s-1\right\}  }}\dbinom
{n+Kp+l}{Kp+l}^{2}\nonumber\\
&  =\underbrace{\sum_{l\in\left\{  0,1,\ldots,p-1\right\}  }}_{=\sum
_{l=0}^{p-1}}\underbrace{\sum_{K\in\left\{  0,1,\ldots,s-1\right\}  }}%
_{=\sum_{K=0}^{s-1}}\dbinom{n+Kp+l}{Kp+l}^{2}=\sum_{l=0}^{p-1}\sum_{K=0}%
^{s-1}\dbinom{n+Kp+l}{Kp+l}^{2}\nonumber\\
&  =\sum_{u=0}^{p-1}\sum_{K=0}^{s-1}\dbinom{n+Kp+u}{Kp+u}^{2}
\label{pf.thm.Z3.long.3}%
\end{align}
(here, we have renamed the summation index $l$ as $u$ in the first sum).

Now, forget that we fixed $n$. We thus have proven (\ref{pf.thm.Z3.long.3})
for each $n\in\mathbb{Z}$.

On the other hand, Lemma \ref{lem.pr-bij} shows that the map%
\begin{align*}
\left\{  0,1,\ldots,p-1\right\}  \times\left\{  0,1,\ldots,r-1\right\}   &
\rightarrow\left\{  0,1,\ldots,rp-1\right\}  ,\\
\left(  l,K\right)   &  \mapsto Kp+l
\end{align*}
is well-defined and is a bijection. Hence, we can substitute $Kp+l$ for $n$ in
the sum $\sum_{n\in\left\{  0,1,\ldots,rp-1\right\}  }\sum_{m=0}^{sp-1}%
\dbinom{n+m}{m}^{2}$. We thus obtain%
\[
\sum_{n\in\left\{  0,1,\ldots,rp-1\right\}  }\sum_{m=0}^{sp-1}\dbinom{n+m}%
{m}^{2}=\sum_{\left(  l,K\right)  \in\left\{  0,1,\ldots,p-1\right\}
\times\left\{  0,1,\ldots,r-1\right\}  }\sum_{m=0}^{sp-1}\dbinom{Kp+l+m}%
{m}^{2}.
\]

Now,%
\begin{align*}
&  \underbrace{\sum_{n=0}^{rp-1}}_{=\sum_{n\in\left\{  0,1,\ldots
,rp-1\right\}  }}\sum_{m=0}^{sp-1}\dbinom{n+m}{m}^{2}\\
&  =\sum_{n\in\left\{  0,1,\ldots,rp-1\right\}  }\sum_{m=0}^{sp-1}\dbinom
{n+m}{m}^{2}\\
&  =\sum_{\left(  l,K\right)  \in\left\{  0,1,\ldots,p-1\right\}
\times\left\{  0,1,\ldots,r-1\right\}  }\sum_{m=0}^{sp-1}\dbinom{Kp+l+m}%
{m}^{2}\\
&  =\underbrace{\sum_{\left(  v,L\right)  \in\left\{  0,1,\ldots,p-1\right\}
\times\left\{  0,1,\ldots,r-1\right\}  }}_{=\sum_{v\in\left\{  0,1,\ldots
,p-1\right\}  }\sum_{L\in\left\{  0,1,\ldots,r-1\right\}  }}\sum_{m=0}%
^{sp-1}\dbinom{Lp+v+m}{m}^{2}\\
&  \ \ \ \ \ \ \ \ \ \ \left(
\begin{array}
[c]{c}%
\text{here, we have renamed the}\\
\text{summation index }\left(  l,K\right)  \text{ as }\left(  v,L\right)
\end{array}
\right) \\
&  =\underbrace{\sum_{v\in\left\{  0,1,\ldots,p-1\right\}  }}_{=\sum
_{v=0}^{p-1}}\underbrace{\sum_{L\in\left\{  0,1,\ldots,r-1\right\}  }}%
_{=\sum_{L=0}^{r-1}}\underbrace{\sum_{m=0}^{sp-1}\dbinom{Lp+v+m}{m}^{2}%
}_{\substack{=\sum_{u=0}^{p-1}\sum_{K=0}^{s-1}\dbinom{Lp+v+Kp+u}{Kp+u}%
^{2}\\\text{(by (\ref{pf.thm.Z3.long.3}) (applied to }n=Lp+v\text{))}}}\\
&  =\underbrace{\sum_{v=0}^{p-1}\sum_{L=0}^{r-1}\sum_{u=0}^{p-1}\sum
_{K=0}^{s-1}}_{=\sum_{K=0}^{s-1}\sum_{L=0}^{r-1}\sum_{u=0}^{p-1}\sum
_{v=0}^{p-1}}\underbrace{\dbinom{Lp+v+Kp+u}{Kp+u}^{2}}_{\substack{=\dbinom
{\left(  K+L\right)  p+u+v}{Kp+u}^{2}\\\text{(since }Lp+v+Kp+u=\left(
K+L\right)  p+u+v\text{)}}}\\
&  =\sum_{K=0}^{s-1}\sum_{L=0}^{r-1}\underbrace{\sum_{u=0}^{p-1}\sum
_{v=0}^{p-1}\dbinom{\left(  K+L\right)  p+u+v}{Kp+u}^{2}}_{\substack{\equiv
\dbinom{K+L}{K}^{2}\eta_{p}\operatorname{mod}p^{2}\\\text{(by Lemma
\ref{lem.A9} (applied to }N=K+L\text{))}}}\\
&  \equiv\sum_{K=0}^{s-1}\sum_{L=0}^{r-1}\dbinom{K+L}{K}^{2}\eta_{p}=\eta
_{p}\underbrace{\sum_{K=0}^{s-1}\sum_{L=0}^{r-1}\dbinom{K+L}{K}^{2}%
}_{\substack{=\epsilon_{r,s}\\\text{(by (\ref{pf.thm.Z3.long.ers}))}}%
}=\eta_{p}\epsilon_{r,s}\operatorname{mod}p^{2}.
\end{align*}
This proves Theorem~\ref{thm.Z3}.
\end{proof}
\end{verlong}

\subsection{Acknowledgments}

Thanks to Doron Zeilberger and Roberto Tauraso for alerting me to
\cite{AmdTau16} and \cite{SunTau11}.

\end{document}